\newcommand{\blue}[1]{{\color{blue} #1}}
\newcommand{\x}{{\tt x}}
\newcommand{\y}{{\tt y}}
\newcommand{\R}{\mathbb{R}}
\newcommand{\diver}{\text{div}}
\newcommand{\mS}{\mathbb{S}}
 \newtheorem{lemma}{Lemma}[section]
\newtheorem{definition}{Definition}[section]
\newtheorem{theorem}{Theorem}[section]
\newtheorem{proposition}{Proposition}[section]
\newtheorem{remark}{Remark}[section]
\newcommand{\bremark}{\begin{remark} \em}
\newcommand{\eremark}{\end{remark} }
\numberwithin{equation}{section}
\title{fredholm properties of the jacobi Operator of minimal conical hypersurfaces}
\author{Oscar Agudelo}
\address{University of West Bohemia in Pilsen-NTIS, Univerzitn\'{i} 22, Czech Republic.}
\email {oiagudel@ntis.zcu.cz}
\author{Matteo Rizzi}
\address{Mathematisches Institut, Justus Liebig Universit\"{a}t, Arndtstrasse 2, 35392, Giessen, Germany.}
\email{mrizzi1988@gmail.com}
\thanks{O. Agudelo was supported by the Grant 22-18261S of the Grant Agency of the Czech Republic. M. Rizzi was partially supported by the Alexander von Humboldt foundation.}
\begin{document}

\maketitle

\begin{abstract}
In this paper we study non-degeneracy properties of $\Sigma$ via the Jacobi operator $J_\Sigma:=\Delta_\Sigma+|A_\Sigma|^2$ of a given minimal hypersurface $\Sigma$ asymptotic to a cone $C\subset \R^{N+1}$ of co-dimension one. Here $\Delta_{\Sigma}$ is the Laplace Beltrami operator of $\Sigma$ and $|A_{\Sigma}|$ is the norm of the second fundamental form of $\Sigma$. We also construct a right inverse of $J_{\Sigma}$, that is, we prove that  the Jacobi equation $J_\Sigma\phi=f$ is solvable in $\Sigma$, at least under some suitable non-degeneracy assumptions about $\Sigma$ and about the asymptotic behavior of $f$ at infinity. We also
discuss some
examples where our results can be applied.
\end{abstract}

\section{introduction}
Throughout this work we assume that $N\ge 3$ and we denote by $\mS^{N}$ the unit sphere in $\R^{N+1}$ centered at the origin. 

\vskip 3pt 
Let us start by introducing some notation. For any $m\in \mathbb{N}$, $O(m)$ denotes the group of orthogonal transformations of $\R^{m}$. Let $S$ be a subgroup of $O(N+1)$ %\red{Do $S$ need to be a subgroup of $O(N+1)$? Because we want $\zeta_0(y):=y\cdotp\nu_\Sigma(y)$ to be $S-$invariant. If $\rho\in O(N+1)$, we have $$\zeta_0(\rho y)=\rho y\cdotp \nu_\Sigma(\rho y)=y\cdotp \rho^t\nu_\Sigma(\rho y)=y\cdotp \rho^t \rho\nu_\Sigma(y)=y\cdotp\nu_\Sigma(y)=\zeta_0(y),\quad\forall\,y\in\Sigma.$$} 
A set $D\subset \R^{N+1}$ is $S-$invariant if for any $\rho\in S$, $\rho(D)\subset D$. If $D$ is an $S-$invariant set, a function $f:D\to \R$ is $S-$invariant if for any $\rho\in S$, $f\circ\rho=f$ in $D$.

\vskip 3pt
Next, let $\Gamma\subset \mS^{N}$ be a minimal submanifold of $\mS^N$ of codimension one and consider the minimal cone
$$
C:=\{r\theta\in \R^{N+1}\,:\, r>0 \quad \hbox{and} \quad \theta \in \Gamma\}.
$$

We will refer to $C$ as a trivial cone, whenever $\Gamma$ lies in a linear subspace of $\R^{N+1}$ of dimension $N$. In this case $C$ coincides with this subspace.

The vector function $\nu_C:C\setminus \{0\} \to \mathbb{S}^N$ will denote a fixed and continuous choice of the unit normal vector to $C\setminus \{0\}$. In view of the homogeneity of $C$ with respect to the variable $r$, $\nu_C$ is independent of $r$, i.e., $\nu_C(r\theta)=\nu_C(\theta)$ for any $r>0$ and $\theta \in \Gamma$.  

\vskip 3pt

Now, let $\Sigma$  be a smooth minimal hypersurface of codimension one. %For $R>0$, set
%\begin{equation}\label{eqn:split_Sigma}
%K_R:=\{y\in \Sigma\,:\,|y|\leq R\}\quad \hbox{and} \quad \Sigma_R:= \{y\in \Sigma\,:\,|y|>R\}
%\end{equation}
%and observe that $\Sigma=K_R\cup\Sigma_R$.
%\vskip 3pt
In what follows, we assume the following hypotheses on $C$ and $\Sigma$: 
\begin{itemize} 
\item[(H1)] $C$ is a nontrivial and $S-$invariant cone;
\vskip 3pt
\item[(H2)] $\Sigma$ is $S-$invariant;
\vskip 3pt
\item[(H3)] $\Sigma$ is asymptotic to the cone $C$, that is: there exist $R>0$ and a smooth function $w:(R,\infty)\times\Gamma\to \R$ such that $\lim \limits_{r\to \infty} \|w(r,\cdot)\|_{C^2(\Gamma)}=0$ and such that
\begin{equation}
\label{Sigma-normal-graph}
\Sigma_R:=\{r\theta+w(r,\theta)\nu_C(\theta):\,\theta\in\Gamma,\,r>R\}.
\end{equation}
and $K_R:=\Sigma\setminus\Sigma_R$ is compact.
\color{black}\item[(H4)] $S$ is maximal with respect to property (H1). In other words, if $\rho\in O(N+1)$ is such that $\rho(C)\subset C$, then $\rho\in S$.
\end{itemize}

The invariance assumption in (H1) is equivalent to saying that $\Gamma$ is $S-$invariant. This hypothesis is not restrictive, since $S$ may be the trivial subgroup. However, in the case that $C$ (or $\Sigma$) features certain symmetries, it is possible to find examples of such  nontrivial subgroups $S$. For instance, when $m,n\geq 2$ and $C=C_{m,n}$ is the well-known Lawson cone, then we may consider $S=O(m)\times O(n)$ (see Theorems 1.4, 1.5 below and the works \cite{agudelo2022doubling,agudelo2022k,agudelo2024jacobi}). We refer the reader to \cite{allard1981radial, simon2006isolated} for notions related to this invariance. Hypothesis (H3), is motivated by the Definition 2.1 and the developments in \cite{agudelo2024jacobi}. Up to modifying $S$ by means of the Zorn lemma, if necessary, it is not restrictive to assume that (H4) holds. This fact, together with hypothesis (H2), yields that $\Sigma$ inherits the symmetries of $C$. In the present work, we need a finer topology in order to extract more precise information of the geometric objects hereby studied. We believe that Definition 2.1 in \cite{agudelo2024jacobi} and the statement in (H3) are equivalent.

\vskip 3pt
Let $J_{\Sigma}$ denote the Jacobi operator of $\Sigma$, i.e.,
\begin{equation}
\label{Jacobi-Sigma-lin}J_\Sigma:=\Delta_{\Sigma} +|A_{\Sigma}|^2, 
\end{equation}
where $\Delta_\Sigma$ is the Laplace-Beltrami operator over $\Sigma$ and $|A_{\Sigma}|$ corresponds to the norm of the second fundamental form of $\Sigma$.

\vskip 3pt
Let $\mathcal{D}(\Sigma)=(C^\infty_c(\Sigma))'$ denote the space of distributions on $\Sigma$ (see, e.g., Chapter 5 in \cite{rudin1974functional}). We consider the {\it Jacobi equation} 
$J_{\Sigma}\phi = 0$ in $\mathcal{D}(\Sigma)$. In this sense, we allow for singular solutions in $\mathcal{D}(\Sigma)$ of the Jacobi equation (see e.g., section 5 in \cite{agudelo2024jacobi}). The solutions to the Jacobi equation will be referred to as {\it Jacobi fields} of $\Sigma$. 

\vskip 3pt
As it is known from  standard regularity theory and the smoothness of $\Sigma$, {\it Jacobi fields} $\phi \in L^{\infty}_{loc}(\Sigma)$, actually belong to $C^2(\Sigma)$ and solve the Jacobi equation in the classical sense.

\vskip 3pt
Following the definitions and conventions in \cite{agudelo2024jacobi} we recall that $G(\Sigma)$ denotes the space of geometric Jacobi fields in  $\Sigma$, that is, the space of locally bounded solutions of the Jacobi equation generated by the invariances of the minimal surface $\Sigma$. These invariances arise from translations, dilations and rotations of $\Sigma$ (see Introduction in \cite{agudelo2024jacobi}). Let $\{e_1,\ldots, e_{N+1}\}$ be the standard basis in $\R^{N+1}$ and consider the Jacobi fields of $\Sigma$ generated by the invariance of $\Sigma$ under translations, i.e.,
\begin{equation}\label{def:jac_field_transl}
\zeta_j(y):=e_j\cdot \nu_\Sigma(y) \quad \hbox{for } y\in \Sigma, \,1\leq j\leq N+1.
\end{equation}

Observe that for any $j=1,\ldots,N+1$, $\zeta_j\in L^{\infty}(\Sigma)$ and $\zeta_j$ is bounded away from zero at infinity.

Let also $\zeta_0$ be the Jacobi field of $\Sigma$ generated by the invariance under dilations of $\Sigma$, i.e,
\begin{equation}\label{def:jac_field_dilat}
\zeta_0(y):=y\cdotp\nu_\Sigma(y) \quad \hbox{for }y\in \Sigma.
\end{equation}

%Recall that $$SO(N+1):=\{M\in\R^{N+1}\times\R^{N+1}:\,M^t\cdot M=Id_{N+1} \hbox{ and } {\rm det}(M)=1\}$$ 
%is a compact lie group of dimension $\frac{N(N+1)}{2}$. 
Let 
\begin{equation}\label{def:matrix_antisymm}
\mathcal{A}_{N+1}:=\{M\in\R^{N+1}\times\R^{N+1}:\,M=-M^t\}    
\end{equation}
be the space of antisymmetric $(N+1)\times(N+1)$-matrices. We observe that ${\rm dim}(\mathcal{A}_{N+1})=\frac{N(N+1)}{2}$. Then the space of the Jacobi fields arising from rotations is given by $\{\zeta_M\}_{M\in \mathcal{A}_{N+1}}$, where
{\begin{equation}
\label{jacobi-fields-rot}
\zeta_M(y):=My\cdotp\nu_\Sigma(y)\quad \hbox{for }y\in \Sigma.
\end{equation}}

Observe that for any matrix $M\in\mathcal{A}_{N+1}$ such that the corresponding rotation $\rho\in SO(N+1)$ fulfills $\rho(\Sigma)\neq \Sigma$, the Jacobi field $\zeta_M$ is unbounded (see Lemma \ref{lemma-dec-Jacobi}). 

\vskip 3pt

Since $G(\Sigma)={\rm span}\{\zeta_0,\zeta_1,\ldots,\zeta_{N+1}\}\oplus \{\zeta_M\}_{M\in\mathcal{A}_{N+1}}$, 
$$
{\rm dim}(G(\Sigma))\leq \frac{N(N+1)}{2}+N+2.
$$

For future reference, denote also by  $G(\Sigma,S)$ the subspace of $G(\Sigma)$ of $S$-invariant geometric Jacobi fields.

\vskip 3pt
In view of (H3) and the fact that the Jacobi field of $C$ associated to dilations is the trivial one, $\zeta_0$ decays to zero at infinity, as we will prove in Lemma \ref{lemma-dec-Jacobi}. We recall that the decay of $\zeta_0$ is polynomial if the underlying cone is strictly stable, however for more general cones we cannot expect this decay to be polynomial, even in the case of stable, but not strictly stable underlying cones (see Remark $3.2$ of \cite{agudelo2024jacobi}). 

Set
\begin{equation}\label{def:rate_decay_zeta0}
\bar{\nu}:=\inf\{\nu\in\R:\, \limsup_{|y|\to\infty}|y|^{-\nu}|\zeta_0(y)|=0\}.
\end{equation}

The infimum in \eqref{def:rate_decay_zeta0} is well defined in view of Lemma \ref{lemma-dec-Jacobi}. Observe also that $\bar{\nu}\leq 0$.

\medskip
Next, for any $\nu\in\R$ define
\begin{equation}\label{def:jacobi_fields_decay}
D_\nu(\Sigma):=\{\phi\in C^2(\Sigma):J_\Sigma \phi=0,\,\phi(1+|y|)^{-\nu}\in L^\infty(\Sigma)\}
\end{equation}
and
\begin{equation}\notag
D(\Sigma,S)_\nu:=\{\phi\in D_\nu(\Sigma):\,\phi\text{ is $S$-invariant}\}.
\end{equation}

\vskip 3pt
Our first result states that when the underlying cone $C$ is nontrivial, there are no nontrivial bounded geometric Jacobi fields of $\Sigma$, which decay at a polynomial rate faster than $\nu=\bar{\nu}$. 

\begin{theorem}
\label{th_jacobi-geom}
Let $C$ and $\Sigma$ satisfy (H1)-(H4). Then for any $\nu<\bar{\nu}$, $G(\Sigma)\cap D_\nu(\Sigma)=\{0\}$.    
\end{theorem}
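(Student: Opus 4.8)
The plan is to analyze each family of geometric Jacobi fields separately, using the asymptotic behavior dictated by (H3) together with the definition \eqref{def:rate_decay_zeta0} of $\bar\nu$. Recall the decomposition $G(\Sigma)={\rm span}\{\zeta_0,\zeta_1,\dots,\zeta_{N+1}\}\oplus\{\zeta_M\}_{M\in\mathcal{A}_{N+1}}$, so a general element of $G(\Sigma)$ is $\phi=a_0\zeta_0+\sum_{j=1}^{N+1}a_j\zeta_j+\zeta_M$ for some scalars $a_j$ and some $M\in\mathcal{A}_{N+1}$. I want to show that if such a $\phi$ lies in $D_\nu(\Sigma)$ with $\nu<\bar\nu\le 0$, then all coefficients vanish and $M=0$.

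First I would record the asymptotics of each building block at infinity, using the normal-graph description \eqref{Sigma-normal-graph} and $\|w(r,\cdot)\|_{C^2(\Gamma)}\to 0$. The translation fields $\zeta_j(y)=e_j\cdot\nu_\Sigma(y)$ converge, along $\Sigma_R$, to $e_j\cdot\nu_C(\theta)$, a function on $\Gamma$ that is not identically zero (here is where nontriviality of $C$ from (H1) enters: if all $e_j\cdot\nu_C\equiv 0$ were possible simultaneously, $\nu_C$ would be forced into a hyperplane — in fact the $\zeta_j$ are bounded away from zero at infinity as already noted in the excerpt). Hence any nonzero combination $\sum a_j\zeta_j$ does not decay, so it cannot be $O(|y|^\nu)$ with $\nu<0$; this forces $a_1=\cdots=a_{N+1}=0$. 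For the rotation field $\zeta_M$, I would invoke Lemma \ref{lemma-dec-Jacobi}: either the rotation $\rho$ generated by $M$ preserves $\Sigma$, in which case $\zeta_M\equiv 0$, or $\rho(\Sigma)\ne\Sigma$ and $\zeta_M$ is unbounded, hence again not $O(|y|^\nu)$ for $\nu<0$. In either case the contribution of $\zeta_M$ is forced to be zero (more precisely, $\zeta_M$ grows linearly like $|My\cdot\nu_C|$ along the cone, so it is not even bounded). This reduces $\phi$ to $a_0\zeta_0$.

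Finally I must rule out $a_0\ne 0$. Here the key point is the definition of $\bar\nu$ as an infimum: by \eqref{def:rate_decay_zeta0}, for every $\nu<\bar\nu$ we have $\limsup_{|y|\to\infty}|y|^{-\nu}|\zeta_0(y)|\ne 0$ (indeed $=+\infty$, since $\bar\nu$ is the infimal exponent making this limsup vanish, and the set of admissible $\nu$ is easily seen to be an interval unbounded above). Consequently $\zeta_0(1+|y|)^{-\nu}\notin L^\infty(\Sigma)$ for any $\nu<\bar\nu$, so $a_0\zeta_0\in D_\nu(\Sigma)$ already forces $a_0=0$. Combining the three steps gives $\phi\equiv 0$, proving $G(\Sigma)\cap D_\nu(\Sigma)=\{0\}$.

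The main obstacle I anticipate is making the first step fully rigorous: one needs that no nontrivial linear combination $\sum a_j\zeta_j$ decays at infinity, and that the decomposition of $G(\Sigma)$ into the translation/dilation part and the rotation part is genuinely a direct sum at the level of asymptotics — that is, cancellations between a decaying $\zeta_0$, bounded nondecaying $\zeta_j$'s, and an unbounded $\zeta_M$ cannot conspire to produce something $O(|y|^\nu)$. This is handled by separating the growth scales: $\zeta_M$ (if nonzero) dominates at order $|y|$, the $\zeta_j$ live at order $1$, and $\zeta_0$ is $o(1)$; matching orders in the asymptotic expansion along $\Sigma_R$ kills the coefficients one scale at a time. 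The nontriviality hypothesis (H1) is exactly what guarantees the order-$1$ part (the $\zeta_j$'s restricted to $\Gamma$) cannot vanish identically, and I would isolate that linear-algebra fact about $\nu_C$ as a small separate observation or cite the boundedness-away-from-zero statement already made in the excerpt.
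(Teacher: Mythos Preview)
Your proposal is correct and follows essentially the same route as the paper: the paper's proof simply invokes Lemma~\ref{lemma-dec-Jacobi}, which records precisely the three scale-separated facts you use (nontrivial translation combinations have positive $\limsup$ at infinity, nontrivial rotation fields grow linearly, and $\zeta_0\to 0$). The remaining step---ruling out $a_0\zeta_0$ via the definition of $\bar\nu$---is exactly as you argue, and your observation that $\limsup_{|y|\to\infty}|y|^{-\nu}|\zeta_0|=+\infty$ (not merely $>0$) for $\nu<\bar\nu$ is the correct way to close that case.
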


In the limit case $\nu=\bar{\nu}$, the result in Theorem \ref{th_jacobi-geom} might not be true. For instance, if the underlying cone is strictly stable, $G(\Sigma)\cap D_\nu(\Sigma)={\rm span}\{\zeta_0\}$. However, $G(\Sigma)\cap D_\nu(\Sigma)$ may be trivial as well. This can happen, for instance, if the underlying cone is stable but not strictly stable and $\zeta_0(y)=O(|y|^{-\frac{N-2}{2}}\log|y|)$ as $|y|\to\infty$ (see  Remark $3.2$ of \cite{agudelo2024jacobi}).

\vskip 3pt

Next, we introduce a notion of  \textit{nondegeneracy} of $\Sigma$ through the operator $J_{\Sigma}$.

\begin{definition}\label{def:S_dila_nond} 
\begin{itemize}
    \item[i.] The minimal surface $\Sigma$ is dilation nondegenerate if $\bar{\nu}>2-N$ and $D_\nu(\Sigma)=\{0\}$ for any $\nu<\bar{\nu}$.

    \vskip 3pt
    \item[ii.] The minimal surface $\Sigma$ is $S$-dilation nondegenerate if $\bar{\nu}>2-N$ and $D_\nu(\Sigma,S)=\{0\}$ for any $\nu<\bar{\nu}$.\end{itemize}
\end{definition}

Roughly speaking, $\Sigma$ is $S$-dilation-nondegenerate if the Jacobi field $\zeta_0$ does not decay {\it too fast} and if there are no nontrivial $S$-invariant Jacobi fields which decay faster than $\zeta_0$.

\vskip 3pt
In view of Theorem \ref{th_jacobi-geom}, when $C$ is non-trivial, Definition \ref{def:S_dila_nond} is violated if and only if either $\bar{\nu}\le 2-N$ or there are nongeometric bounded Jacobi fields which decay faster than $\zeta_0$.\\ 

In general, it is a difficult task to exhibit minimal hypersurfaces $\Sigma$ which are \textit{dilation degenerate}, or in other words, that violate one of the properties required in Definition \ref{def:S_dila_nond}. This suggest that Definition \ref{def:S_dila_nond} might be an appropriate tool in the classification of surfaces $\Sigma$ satisfying (H1)-(H4).\\

\vskip 3pt
\begin{remark} Assume that $\bar{\nu}>2-N$ and let $S$ be a subgroup \blue{of} $O(N+1)$. If $\Sigma$ is $S-$nondegenerate in the sense introduced in Definitions 1.3 and 1.4 in \cite{agudelo2024jacobi}, then $\Sigma$ is also $S-$dilation nondegenerate.
\end{remark}

When the underlying cone $C$ is trivial, we are able to construct an example of dilation degenerate minimal hypersurface asymptotic to $C$ in the sense of (H3). However, the complement of such a hypersurface in $\R^{N+1}$ is connected. 
\begin{theorem}\label{the-deg-hyp}
Let $N\ge 3$. Then there exists a dilation-degenerate minimal hypersurface $\Sigma$ asymptotic to $\R^N\times\{0\}$ such that $\R^{N+1}\setminus\Sigma$ is connected. 
\end{theorem}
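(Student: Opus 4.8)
The goal is to exhibit a single minimal hypersurface $\Sigma\subset\R^{N+1}$, asymptotic to the hyperplane $\R^N\times\{0\}$ in the sense of (H3), which is \emph{dilation degenerate} in the sense of Definition \ref{def:S_dila_nond}, and whose complement in $\R^{N+1}$ is connected. The natural candidate for such a $\Sigma$ is a \emph{catenoid-type} hypersurface: the $O(N)$-invariant (or $O(k)\times O(N-k)$-type) minimal hypersurface of revolution in $\R^{N+1}$. I would take $\Sigma$ to be (a suitable piece of) the $N$-dimensional catenoid, i.e. the rotationally symmetric minimal hypersurface obtained by revolving the solution of the catenary-type ODE $\ddot u=(1+\dot u^2)\,\frac{N-1}{r}\dot u$ (or its standard higher-dimensional analogue). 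For $N\ge 3$ the higher-dimensional catenoid has the feature that each of its two ends is asymptotic to a hyperplane at a \emph{polynomial} rate $O(r^{2-N})$ (rather than logarithmically as in $N=2$); so a single end of it — which is a graph over $\R^N\setminus B_R$ — is asymptotic to $\R^N\times\{0\}$ exactly as required by (H3), with $K_R$ a compact piece. Taking $\Sigma$ to be this one end (a graph, hence with connected complement in $\R^{N+1}$), or alternatively the whole catenoid (whose complement in $\R^{N+1}$ is connected since $\R^{N+1}\setminus\Sigma$ has two components only if $\Sigma$ separates, and the catenoid does not separate $\R^{N+1}$ — wait, it does; so one must use the single-ended graph), handles the topological requirement. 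I will use the single end, which is globally a graph $x_{N+1}=g(|x'|)$ over $\{|x'|>R\}$, capped off smoothly; its complement is obviously connected.

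The core of the argument is computing the decay rate $\bar\nu$ of the dilation Jacobi field $\zeta_0(y)=y\cdot\nu_\Sigma(y)$ and then exhibiting a Jacobi field that decays faster. On a catenoid-type hypersurface asymptotic to a hyperplane, the normal $\nu_\Sigma$ converges to $e_{N+1}$ and the position vector $y$ has $|y|\sim r$, so $\zeta_0(y)=y\cdot\nu_\Sigma$ behaves, after subtracting the dominant part, like $r\,g'(r)$ plus lower-order terms; since $g(r)=c\,r^{2-N}+o(r^{2-N})$ one gets $g'(r)\sim (2-N)c\,r^{1-N}$ and hence $\zeta_0(y)=O(r^{2-N})=O(|y|^{2-N})$, which forces $\bar\nu\le 2-N$. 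This is already enough to \emph{violate} the first condition of dilation nondegeneracy, which demands $\bar\nu>2-N$. So the strategy is: (1) set up the catenoid end precisely as a graph, verifying (H3) with the polynomial rate; (2) compute the asymptotics of $\zeta_0$ via the ODE and the expansion of $g$, concluding $\bar\nu\le 2-N$; (3) observe that this by itself makes $\Sigma$ dilation degenerate, per the remark after Definition \ref{def:S_dila_nond} (``Definition is violated if and only if either $\bar\nu\le 2-N$ or \ldots''); (4) note the complement is connected since $\Sigma$ is a graph with a compact cap.

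An alternative, perhaps cleaner, route is to keep the full catenoid (which is genuinely asymptotic to two parallel hyperplanes, so for a single-sheet description one works with one end) and instead of computing $\bar\nu$ directly, use the translation Jacobi field $\zeta_{N+1}(y)=e_{N+1}\cdot\nu_\Sigma(y)$, which on a catenoid-type surface tends to $1$ at infinity and so is bounded but does \emph{not} decay; meanwhile one of the other geometric fields, e.g. a combination involving $\zeta_0$ and $\zeta_{N+1}$, can be arranged to decay like $r^{1-N}$ or faster, giving a nontrivial Jacobi field that decays faster than $\zeta_0$ — again violating Definition \ref{def:S_dila_nond}(i). Either way the key computation is the precise asymptotic expansion of the catenoid profile function and of its normal vector.

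\textbf{Main obstacle.} The delicate point is making the asymptotic expansion of the catenoid end rigorous enough to pin down $\bar\nu$ exactly, \emph{and} verifying that the ``graph over an exterior domain, capped smoothly'' construction genuinely yields a smooth minimal hypersurface satisfying (H3) with $K_R$ compact — in particular that the catenoid's neck can be included in the compact part $K_R$ while the chosen end is exactly a normal graph over the cone $\R^N\times\{0\}$ with $\|w(r,\cdot)\|_{C^2(\Gamma)}\to 0$. The ODE analysis is classical (the $N$-dimensional catenary), so the asymptotics $g(r)=c r^{2-N}(1+o(1))$, $g'(r)=(2-N)c\,r^{1-N}(1+o(1))$, and the resulting $\zeta_0(y)=\Theta(|y|^{2-N})$ are essentially bookkeeping; the subtle part is controlling that there is no miraculous cancellation making $\zeta_0$ decay strictly faster than $r^{2-N}$ (which would change $\bar\nu$ but, note, only make it \emph{more} negative, hence still $\le 2-N$ — so in fact even a cancellation is harmless for our purpose). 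Thus the real work is purely the verification of (H3) and the sign/rate of the leading coefficient $c\ne0$.
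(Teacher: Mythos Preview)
Your approach is essentially the paper's: take a catenoid-type end (a radial graph over $\R^N\setminus\overline{B}_R$), observe that its profile satisfies $g(r)=cr^{2-N}(1+o(1))$, compute $\zeta_0=\dfrac{-rg'(r)+g(r)}{\sqrt{1+g'(r)^2}}=c(N-1)r^{2-N}(1+o(1))$, and conclude $\bar\nu=2-N$, which already violates Definition~\ref{def:S_dila_nond}. The paper carries this out by explicitly integrating the ODE for the radial minimal surface equation (Proposition~\ref{prop-Plateau}).

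The one point where your plan goes astray is the phrase ``capped off smoothly.'' You cannot attach a smooth compact minimal cap to the catenoid end: there are no compact minimal hypersurfaces without boundary in $\R^{N+1}$, and any smooth cap you glue on will fail the minimal surface equation somewhere. The paper's resolution is simply \emph{not to cap}: it takes $\Sigma=\{(x,u_R(x)):|x|>R\}$ and declares $K_R=\emptyset$ in (H3). This is a graph over an exterior domain (the half-catenoid cut at its waist, where $v_R'\to-\infty$), and $\R^{N+1}\setminus\Sigma$ is connected precisely because any two points can be joined through the solid cylinder $\{|x'|<R\}$. So drop the capping idea entirely; the exterior graph by itself already satisfies (H3) and gives the connected complement for free. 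Your instinct that the full two-ended catenoid separates $\R^{N+1}$ (and is asymptotic to two \emph{distinct} parallel hyperplanes, hence not to a single cone) is correct, so discarding it was right.
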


\vskip 3pt

\color{black}Now we focus on developing a linear theory to solve the Jacobi equation
\begin{equation}
\label{eq-Jacobi}
J_\Sigma\phi=f\qquad\text{in }\Sigma
\end{equation}
under suitable assumptions on $f$. 

\vskip 3pt
We start by introducing  the functional analytic setting we use for solving equation \eqref{eq-Jacobi}. Let $\nu \in \R$ and $S$ be a subgroup of $O(N+1)$. The space $C^{0,\alpha}_{\nu}(\Sigma,S)$ denotes the space of $S$-invariant functions $f\in C^{0,\alpha}_{loc}(\Sigma)$ with finite norm
\begin{equation}\notag
\|f\|_{C^{0,\alpha}_{\nu}(\Sigma)}:=\sup_{y\in\Sigma}(|y|+1)^{-\nu}\|f\|_{C^{0,\alpha}(B_1(y))},
\end{equation}
where $B_1(y)\subset \Sigma$ is the geodesic ball in $\Sigma$ of radius one and centered at $y\in \Sigma$.

\vskip 3pt

In what follows, we use the notation 
$$\|f\|_{\infty,\nu}:=\|(|y|+1)^{-\nu}f\|_\infty.
$$

The space $C^{2,\alpha}_{\nu}(\Sigma,S)$ denotes the space of $S$-invariant functions $\phi\in C^{2,\alpha}_{loc}(\Sigma)$ with finite norm
\begin{equation}\notag
\|\phi\|_{C^{2,\alpha}_\nu(\Sigma)}:=\sup_{y\in\Sigma}(|y|+1)^{-\nu+2}\|\nabla^2_\Sigma \phi\|_{C^{0,\alpha}(B_1(y))}
+\|\nabla_\Sigma \phi\|_{\infty,\nu-1}+\|\phi\|_{\infty,\nu}.
\end{equation}

Recall that $\Gamma\subset \mathbb{S}^{N}$ is a smooth minimal submanifold of $\mathbb{S}^N$. Let $\{\lambda_j\}_j$ be the sequence of eigenvalues of the operator $-J_{\Gamma}$ in $L^2(\Gamma)$, where $J_{\Gamma}:=\Delta_{\Gamma}+ |A_{\Gamma}|^2$ and $\Delta_{\Gamma}$ and $|A_{\Gamma}|$ correspond to the Laplace-Beltrami operator and the norm of the second fundamental form in $\Gamma$, respectively. It is known that 
\begin{equation}
\label{ineq:eigval_Gamma}
\lambda_0<\lambda_1\le\dots\le \lambda_j\le \dots,\qquad\lambda_j\to\infty\qquad j\to\infty.  
\end{equation}

Recalling the notations in \cite{agudelo2022k}, we set
$$j_0:=\min\left\{j\ge 0:\,\left(\frac{N-2}{2}\right)^2+\lambda_j\ge 0\right\}.$$

The authors in \cite{HS} proved that $j_0=0$ if and only if the cone $C$ is stable. For $j\ge j_0$, we set
\begin{equation}
\label{def:discrim}\Lambda_j:=\sqrt{\left(\frac{N-2}{2}\right)^2+\lambda_j}\ge 0.    
\end{equation}

If $C$ is not stable, for 
$0\leq j< j_0$, we set $$\Lambda_j:=i\sqrt{-\left(\left(\frac{N-2}{2}\right)^2+\lambda_j\right)}.$$
The numbers $- \frac{N-2}{2}\pm{\rm Re}(\Lambda_j)$ are known as the \textit{indicial roots} of $C$.

\vskip 3pt
Our second main result is concerned with the {\it solvability of the Jacobi equation} over $\Sigma$.

\begin{theorem}
\label{prop-right-inverse-Jacobi}
Assume (H1)-(H4). In addition, let $\Sigma$ be $S$-dilation-nondegenerate and assume that $\nu>2-N-\bar{\nu}$ is not an indicial root of $C$. Then, for any $f\in C^{0,\alpha}_{\nu-2}(\Sigma,S)$, there exist a solution $\phi:=\Phi(f)\in C^{2,\alpha}_{\nu}(\Sigma,S)$ to the Jacobi equation \eqref{eq-Jacobi} such that 
\begin{equation}
\label{est-Jacobi-eq-lin}
\|\phi\|_{C^{2,\alpha}_{\nu}(\Sigma)}\le c\|f\|_{C^{0,\alpha}_{\nu-2}(\Sigma)},  
\end{equation}
for some constant $c>0$ depending only of $\Sigma,\nu$.
\end{theorem}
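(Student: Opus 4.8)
The plan is to solve \eqref{eq-Jacobi} by a combination of (i) a model analysis on the asymptotic cylinder, (ii) a localization/gluing argument on $\Sigma$, and (iii) a duality argument to close the a priori estimate using the $S$-dilation-nondegeneracy hypothesis. First I would write $\Sigma=K_R\cup\Sigma_R$ as in (H3) and introduce on $\Sigma_R$ the asymptotically cylindrical coordinates $(t,\theta)$ with $t=\log r$, under which $\Sigma$ carries a metric that converges (together with its coefficients and those of $|A_\Sigma|^2$) to the exact cylindrical metric $dt^2+g_\Gamma$ of $C$, by virtue of $\|w(r,\cdot)\|_{C^2(\Gamma)}\to 0$ in (H3). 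In these coordinates $J_\Sigma$ is a small perturbation of the model operator $\partial_t^2+\frac{N-2}{?}\,\partial_t+\Delta_\Gamma+|A_\Gamma|^2$ (after the standard conformal change $\phi\mapsto r^{(N-2)/2}\phi$ that removes the first-order term, turning the model into $\partial_t^2+\Delta_\Gamma+|A_\Gamma|^2-\left(\tfrac{N-2}{2}\right)^2$). Separating variables against the eigenfunctions of $-J_\Gamma$ yields, on each Fourier mode $j$, the ODE $\ddot{\psi}_j-\big(\left(\tfrac{N-2}{2}\right)^2+\lambda_j\big)\psi_j=$ RHS, whose characteristic exponents are exactly $-\tfrac{N-2}{2}\pm\mathrm{Re}(\Lambda_j)$, i.e.\ the indicial roots of $C$ appearing in \eqref{def:discrim}. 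The hypothesis that $\nu$ is not an indicial root guarantees that on every mode the inhomogeneous ODE is solvable with the sharp weighted bound $\|\psi_j\|\lesssim\|\text{RHS}_j\|$ on the weighted space with exponent $\nu$; summing over $j$ (using that the $S$-invariance in $C^{0,\alpha}_{\nu-2}(\Sigma,S)$ restricts which modes occur, but in any case the high modes are controlled since $\lambda_j\to\infty$) produces a right inverse $\mathcal{G}_R$ for the model operator on a half-cylinder, with norm bounded independently of $R$.

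Second, I would run a standard parametrix/gluing scheme: patch $\mathcal{G}_R$ on $\Sigma_R$ with an interior parametrix on a large compact piece $K_{R'}$ (obtained from local Schauder theory, since $J_\Sigma$ is uniformly elliptic there), using a partition of unity subordinate to $\{\Sigma_R,K_{R'}\}$. This yields an approximate right inverse $\mathcal{P}$ with $J_\Sigma\circ\mathcal{P}=\mathrm{Id}+\mathcal{R}$, where the error $\mathcal{R}$ is compact (it is supported in a fixed compact collar and gains derivatives) and, crucially, has small operator norm on $C^{0,\alpha}_{\nu-2}(\Sigma,S)$ once $R$ is chosen large — the smallness coming from $\|w(r,\cdot)\|_{C^2(\Gamma)}\to0$ and from the fact that conjugating $J_\Sigma$ to the model on $\Sigma_R$ produces coefficient errors that are $o(1)$ as $R\to\infty$. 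Then $\mathrm{Id}+\mathcal{R}$ is invertible by Neumann series and $\Phi:=\mathcal{P}(\mathrm{Id}+\mathcal{R})^{-1}$ is the desired right inverse, with the Schauder-type estimate \eqref{est-Jacobi-eq-lin} following by combining the weighted bound for $\mathcal{G}_R$, interior Schauder estimates, and the bounded inverse. The range condition $\nu>2-N-\bar\nu$ (equivalently $-\nu+2<N-2+\bar\nu$, matching $\zeta_0$'s decay exponent) is what places the target weight strictly between the relevant pair of indicial roots on the lowest mode, so that the bounded inverse on that mode exists; the complementary mode would force growth or would hit the obstruction.

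The main obstacle — and where $S$-dilation-nondegeneracy enters — is that the model right inverse $\mathcal{G}_R$ is only an \emph{exact} right inverse on the half-cylinder; on all of $\Sigma$ the operator $J_\Sigma:C^{2,\alpha}_\nu(\Sigma,S)\to C^{0,\alpha}_{\nu-2}(\Sigma,S)$ may a priori have a cokernel (indeed, for the wrong weight it does, the obstruction being pairing against the geometric Jacobi field $\zeta_0$ and its "dual"). To rule this out I would prove the a priori estimate $\|\phi\|_{C^{2,\alpha}_\nu}\le c\|J_\Sigma\phi\|_{C^{0,\alpha}_{\nu-2}}$ by contradiction: if a sequence $\phi_k$ violates it, normalize $\|\phi_k\|_{C^{2,\alpha}_\nu}=1$, extract (via interior Schauder estimates and a diagonal argument) a limit $\phi_\infty$ solving $J_\Sigma\phi_\infty=0$; a weighted maximum-principle / barrier argument using the model ODE on $\Sigma_R$ — with the chosen $\nu$ lying strictly below no obstructing indicial root — shows $\phi_\infty$ must decay like $(1+|y|)^{\nu'}$ for some $\nu'<\bar\nu$, hence $\phi_\infty\in D_{\nu'}(\Sigma,S)$ for some $\nu'<\bar\nu$, so $\phi_\infty=0$ by the $S$-dilation-nondegeneracy assumption ($D_\nu(\Sigma,S)=\{0\}$ for $\nu<\bar\nu$, together with $\bar\nu>2-N$). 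Then one must also exclude the "mass escaping to infinity" scenario, i.e.\ that the supremum defining the norm is attained near infinity; this is handled precisely by the sharp weighted bound for $\mathcal{G}_R$ (no indicial root at $\nu$), which forbids concentration at infinity, giving the contradiction. The delicate points are the uniform-in-$R$ control of $\mathcal{G}_R$ across the countably many Fourier modes, the conformal bookkeeping relating the weight $\nu$ on $\Sigma$ to the exponents $-\tfrac{N-2}{2}\pm\mathrm{Re}(\Lambda_j)$, and verifying that $\phi_\infty$ is genuinely $S$-invariant and genuinely in the right weighted class so that nondegeneracy applies.
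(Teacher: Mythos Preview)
Your overall framework (model operator on the cylindrical end, parametrix, Fredholm-type argument, then invoking nondegeneracy) is the right circle of ideas, and indeed the paper proceeds via Fredholm theory for the conjugated operator $\mathcal{L}$ in weighted $L^2$ spaces. But there is a genuine gap in how you invoke the $S$-dilation-nondegeneracy. The theorem asks for a \emph{right} inverse, i.e.\ surjectivity of $J_\Sigma:C^{2,\alpha}_\nu(\Sigma,S)\to C^{0,\alpha}_{\nu-2}(\Sigma,S)$; your contradiction argument instead attempts to prove the a priori estimate $\|\phi\|_{C^{2,\alpha}_\nu}\le c\|J_\Sigma\phi\|_{C^{0,\alpha}_{\nu-2}}$, which is \emph{injectivity} at weight $\nu$. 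These are not the same, and injectivity is false in general here: the hypothesis only requires $\nu>2-N-\bar\nu$, so $\nu$ can (and typically will) exceed $\bar\nu$, in which case the dilation Jacobi field $\zeta_0$ itself lies in $D_{\nu}(\Sigma,S)\subset C^{2,\alpha}_\nu(\Sigma,S)$ and kills your a priori estimate. Your claimed decay improvement ``$\phi_\infty$ must decay like $(1+|y|)^{\nu'}$ for some $\nu'<\bar\nu$'' is exactly where this fails: the indicial-root analysis only improves $\phi_\infty$ to the largest indicial root below $\nu$, which need not be below $\bar\nu$.

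What the hypothesis $\nu>2-N-\bar\nu$ actually encodes is that the \emph{dual} weight $2-N-\nu$ is below $\bar\nu$, and the $S$-dilation-nondegeneracy then gives $D_{2-N-\nu}(\Sigma,S)=\{0\}$. This is the kernel at the dual weight, which by the Fredholm duality (the paper's Proposition~\ref{propo-Fredholm} and Lemma~\ref{lemma:adjoint_oper}, showing $A_\delta^\star=A_{-\delta}$ and $\mathrm{codim}\,\mathrm{Im}(A_\delta)=\dim K_{-\delta}$) is exactly the ($S$-invariant part of the) cokernel at weight $\nu$. So the nondegeneracy kills the cokernel, giving surjectivity---not injectivity. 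Relatedly, your Neumann-series step is also not justified: the gluing error $\mathcal{R}$ coming from cutoff derivatives is compact but not small in operator norm (the cutoffs have $O(1)$ derivatives on a fixed collar), so one cannot invert $\mathrm{Id}+\mathcal{R}$ by a geometric series; one must instead use the Fredholm alternative and then identify and kill the cokernel via the duality above.
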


Next, we recall a result concerned with minimal hypersurfaces $\Sigma$ asymptotic to minimal cones. As we will see, these hypersurfaces $\Sigma$ satisfy our notion of nondegeneracy and hence for them Theorem \ref{prop-right-inverse-Jacobi} holds true.

\begin{theorem}[\cite{HS}]
\label{th-Sigma}
Let $N\ge 7$. Assume that $C\subset\R^{N+1}$ is a minimising cone. Then there exist exactly two oriented, embedded smooth area minimising hypersurfaces $\Sigma^\pm\subset\R^{N+1}$ such that
\begin{enumerate}
\item $\Sigma^\pm$ are asymptotic to $C$ at infinity and do not intersect $C$.
\item $dist(\Sigma^\pm,\{0\})=1$.
\end{enumerate}
\end{theorem}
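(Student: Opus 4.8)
This statement is the celebrated foliation theorem of Hardt and Simon \cite{HS}, so the plan is to recall the structure of their argument rather than to reprove geometric measure theory from scratch. The starting point is that $C$ is an area-minimising cone with an isolated singularity at the origin (minimality of $C$ as a cone over $\Gamma\subset\mathbb S^N$, together with the dimension hypothesis $N\geq 7$, guarantees that the singular set is exactly $\{0\}$ by the Simons-type dimension-reduction and the absence of singular minimising cones in low dimensions). Consider the two connected components $U^+$ and $U^-$ of $\R^{N+1}\setminus C$. The idea is to produce $\Sigma^\pm$ as the boundary of the area-minimising "one-sided" region obtained by solving a Plateau-type problem inside $U^\pm$ with the singular cone as an obstacle/barrier.

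\textbf{Key steps.} First I would, for each large $R>0$, minimise the area functional (perimeter) among Caccioppoli sets $E$ with $C\cap B_R \subset \partial^* E$ constrained to lie on the chosen side, i.e.\ minimise $P(E;B_R)$ over sets $E$ with $E\supset$ (the cone side) $\cap\, B_R$ and prescribed boundary data on $\partial B_R$ coming from $C$ itself. Standard compactness and lower semicontinuity of perimeter yield a minimiser $E_R^\pm$; interior regularity theory for area-minimising boundaries (De Giorgi, Federer, with singular set of dimension $\leq N+1-7$, hence empty since we are away from the cone vertex and can rescale) shows $\partial E_R^\pm$ is a smooth embedded minimal hypersurface in $B_R\setminus C$. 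Second, I would let $R\to\infty$ and extract a subsequential limit $\Sigma^\pm$ in the varifold/flat topology; the key point is to show the limit is nontrivial, i.e.\ does not collapse onto $C$. This is exactly where one uses that $C$ is \emph{strictly} area-minimising on each side away from the vertex — or more precisely, one constructs explicit strict sub/supersolution barriers modelled on the positive Jacobi field of $C$ (the first eigenfunction of $-J_\Gamma$ scaled by the appropriate power $r^{-\frac{N-2}{2}+\Lambda_0}$) to force $\partial E_R^\pm$ to stay a definite distance from $C$ at unit scale. Third, the normalisation $\mathrm{dist}(\Sigma^\pm,\{0\})=1$ is achieved by a dilation $x\mapsto \lambda x$: since $C$ is a cone, the whole family $\{\lambda\Sigma^\pm\}_{\lambda>0}$ consists of area-minimising hypersurfaces asymptotic to $C$, and exactly one value of $\lambda$ realises the unit-distance condition. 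Fourth, for the asymptotics at infinity I would rescale $\Sigma^\pm_\lambda = \lambda^{-1}\Sigma^\pm$ and pass to the limit $\lambda\to\infty$: monotonicity formula plus the uniqueness of the tangent cone at infinity (Allard–Almgren, using that $C$ is an \emph{integrable}/isolated minimising cone) forces $\Sigma^\pm_\lambda\to C$, and elliptic estimates upgrade this to the normal-graph decay stated in (H3), namely $\Sigma^\pm$ is a graph $w^\pm\nu_C$ over $C$ outside a compact set with $w^\pm(r,\theta)\sim c^\pm r^{-\frac{N-2}{2}+\Lambda_0}\varphi_0(\theta)$.

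\textbf{Uniqueness} (the "exactly two" assertion) is the second half of the argument: any embedded area-minimising $\Sigma$ asymptotic to $C$ and disjoint from $C$ must lie entirely in one component $U^+$ or $U^-$ (connectedness of $\Sigma$ plus disjointness from $C$). Within $U^+$, say, both $\Sigma$ and $\Sigma^+$ are leaves of a foliation: Hardt–Simon show $U^+\setminus C$ is foliated by the dilates $\{\lambda\Sigma^+\}$ together with $C$ itself, using the strong maximum principle for minimal hypersurfaces (two area-minimising hypersurfaces on the same side of $C$ that touch must coincide) and a continuity/sweeping argument. Since $\Sigma$ is a smooth complete minimal hypersurface trapped between $C$ and its dilates and asymptotic to $C$, the maximum principle pins it to a single leaf, and the normalisation $\mathrm{dist}(\Sigma,\{0\})=1$ selects $\Sigma=\Sigma^+$; likewise on the other side $\Sigma=\Sigma^-$.

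\textbf{Main obstacle.} The delicate point is the non-collapsing in Step 2 together with the foliation/maximum-principle argument for uniqueness: one must rule out that the minimising sequence $\partial E_R^\pm$ degenerates to $C$, and this requires the quantitative barrier construction from the indicial analysis of $J_C$ (the fact that the smallest indicial root $-\frac{N-2}{2}+\Lambda_0$ governs a genuine positive Jacobi field along which one can push the cone to a strict supersolution). Equivalently, it hinges on $C$ being strictly stable / strictly minimising away from its vertex, which for $N\geq 7$ follows from Simons' inequality and the Hardt–Simon strict stability estimate; packaging this as a clean barrier and verifying the maximum principle applies across the singular point $0$ (where both hypersurfaces are merely asymptotic, not smooth) is the technical heart of \cite{HS}.
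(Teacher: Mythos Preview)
The paper does not prove this theorem at all: it is stated with the attribution \cite{HS} and simply quoted as a known result, with no argument given. Your proposal is therefore not comparable to ``the paper's own proof'' because there is none; you have instead supplied a serviceable outline of the original Hardt--Simon argument, which is more than the paper does.

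As a sketch of \cite{HS} your outline is broadly accurate (Plateau problem on one side of the cone, compactness and regularity, barrier from the first indicial root to prevent collapse, dilation to normalise the distance, foliation plus strong maximum principle for uniqueness). One caveat: you lean on ``strictly minimising / strictly stable'' as the mechanism behind non-collapsing, but the Hardt--Simon theorem as stated here assumes only that $C$ is minimising, not strictly minimising; the barrier and the foliation argument go through under that weaker hypothesis (the indicial root $-\tfrac{N-2}{2}+\Lambda_0$ is still available because a minimising cone is stable, so $\Lambda_0\ge 0$), and the distinction between the strictly minimising and merely minimising cases shows up only later in the paper, in Lemma~\ref{lemma-nondeg-examples}, where it determines which of the two candidate values $\bar\nu$ takes. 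So your sketch is fine as an expository summary, but be careful not to import ``strict'' as a hypothesis.
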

For $N+1=m+n$ and $\xi\in\R^{N+1}$ we write $\xi=(x,y)\in\R^m\times\R^n$ and we consider the Lawson's Cone
$$
C_{m,n}:=
\{(x,y)\in\R^m\times\R^n:\,(n-1)|x|^2=(m-1)|y|^2\},\qquad m,\,n\ge 2.
$$
The Lawson cone is a minimal hypersurface, which is invariant under the action of the group of orthogonal transformations $S=O(m)\times O(n)$. In the case $m=n$, $C_{m,m}$ is known as the Simon's cone.

We remark that for $m,n \geq 2$, the open set $\R^{N+1}\backslash C_{m,n}$ has two connected components, namely
$$
E_{m,n}^\pm:=\{(x,y)\in\R^m\times\R^n:\,\pm((n-1)|x|^2-(m-1)|y|^2)<0\}
$$
corresponding to the interior and the exterior of $C_{m,n}$, respectively. The sets $E_{m,n}^\pm$ help us describe an important feature of $C_{m,n}$ that has been already studied and used in \cite{agudelo2022doubling,agudelo2022k,agudelo2024jacobi,alencar2005m,HS,Mazet_2017} and that is summarised in the next results. 

\begin{theorem}\cite{Mazet_2017}
\label{th-Al}
Let $m,\,n\ge 2$ and set $n+m=N+1\ge 8$. Then there exist exactly two smooth minimal hypersurfaces $\Sigma^\pm_{m,n}\subset E^\pm_{m,n}$ which are asymptotic to $C_{m,n}$ at infinity and ${\rm dist}(\Sigma^\pm_{m,n},0)=1$. Moreover, $\Sigma^\pm_{m,n}$ are $O(m)\times O(n)$-invariant and the Jacobi field $y\cdotp\nu_{\Sigma^\pm_{m,n}}(y)$ never vanishes.
\label{th_min-surf_cone}
\end{theorem}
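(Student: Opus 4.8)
\medskip

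\noindent\textbf{Proof proposal.} The plan is to deduce the statement from the Hardt--Simon theory quoted in Theorem~\ref{th-Sigma}, after checking that $C_{m,n}$ falls in the minimising range. First I would observe that the link $\Gamma_{m,n}=C_{m,n}\cap\mS^{N}$ is the Clifford-type minimal hypersurface $\mS^{m-1}(a)\times\mS^{n-1}(b)\subset\mS^{N}$ with $a^{2}=\tfrac{m-1}{N-1}$ and $b^{2}=\tfrac{n-1}{N-1}$, whose second fundamental form has constant norm $|A_{\Gamma}|^{2}=N-1$; consequently $\lambda_{0}=-(N-1)$ and
\begin{equation}\notag
\Big(\tfrac{N-2}{2}\Big)^{2}+\lambda_{0}=\frac{N^{2}-8N+8}{4}\ \ge\ 0\qquad\Longleftrightarrow\qquad N+1=m+n\ge 8,
\end{equation}
so $C_{m,n}$ is stable in this range, and by the classical analysis of Lawson cones (Bombieri--De~Giorgi--Giusti for the Simons cone, Simoes in general) it is in fact area minimising. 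Theorem~\ref{th-Sigma} then produces exactly two oriented, embedded, smooth, area minimising hypersurfaces asymptotic to $C_{m,n}$, disjoint from it, and at distance $1$ from the origin; each of the two is contained in $\R^{N+1}\setminus C_{m,n}=E^{+}_{m,n}\cup E^{-}_{m,n}$, and, as the Hardt--Simon foliation of $\R^{N+1}$ shows, one lies in $E^{+}_{m,n}$ and the other in $E^{-}_{m,n}$; I denote them $\Sigma^{\pm}_{m,n}$ accordingly. The $O(m)\times O(n)$-invariance is then immediate: each $\rho\in O(m)\times O(n)$ is a Euclidean isometry fixing $C_{m,n}$ and each half $E^{\pm}_{m,n}$ setwise (all defined through $|x|$ and $|y|$ only), so $\rho(\Sigma^{\pm}_{m,n})$ shares all the properties that single out $\Sigma^{\pm}_{m,n}$ among the two hypersurfaces of Theorem~\ref{th-Sigma}, whence $\rho(\Sigma^{\pm}_{m,n})=\Sigma^{\pm}_{m,n}$.

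\medskip

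\noindent For the non-vanishing of the dilation Jacobi field $\zeta_{0}(y)=y\cdot\nu_{\Sigma^{\pm}_{m,n}}(y)$ I would again use the Hardt--Simon foliation of $\R^{N+1}$ by $\{t\,\Sigma^{\pm}_{m,n}\}_{t>0}$ and $C_{m,n}$: let $\lambda:\R^{N+1}\setminus\{0\}\to\R$ be its leaf parameter, normalised so that $\{\lambda=0\}=C_{m,n}\setminus\{0\}$, $\{\lambda=t\}=t\,\Sigma^{+}_{m,n}$ for $t>0$, and $\{\lambda=t\}$ is the corresponding dilate of $\Sigma^{-}_{m,n}$ for $t<0$. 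From $t(\mu\,\Sigma^{\pm}_{m,n})=(t\mu)\,\Sigma^{\pm}_{m,n}$ one sees that $\lambda$ is positively $1$-homogeneous; it is smooth with $\nabla\lambda\neq 0$ away from $C_{m,n}$ (in particular along $\Sigma^{\pm}_{m,n}$), and there $\nu_{\Sigma^{\pm}_{m,n}}=\pm\nabla\lambda/|\nabla\lambda|$. Euler's identity $y\cdot\nabla\lambda(y)=\lambda(y)$ then gives
\begin{equation}\notag
|\zeta_{0}(y)|=\frac{|\lambda(y)|}{|\nabla\lambda(y)|}\ \neq\ 0\qquad\text{for every }y\in\Sigma^{\pm}_{m,n},
\end{equation}
since $\Sigma^{\pm}_{m,n}$ is disjoint from $C_{m,n}=\{\lambda=0\}$. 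Equivalently, passing to the profile curve $\gamma=(s,t)$ in the quarter plane $\{s=|x|\ge 0,\ t=|y|\ge 0\}$, where an $O(m)\times O(n)$-invariant minimal hypersurface is a critical curve of $\int s^{m-1}t^{n-1}\,d\ell$ and $C_{m,n}$ is the ray $(n-1)s^{2}=(m-1)t^{2}$, the field $\zeta_{0}$ equals, up to sign, the support function $t\dot s-s\dot t=-\rho^{2}\dot\psi$ of $\gamma$, so $\zeta_{0}\neq 0$ is precisely the statement that $\gamma$ is a strictly monotone radial graph over that ray.

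\medskip

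\noindent It remains to justify the word ``exactly''. Here I would show that any minimal hypersurface $\Sigma\subset E^{+}_{m,n}$ asymptotic to $C_{m,n}$ and at distance $1$ from the origin must be $\Sigma^{+}_{m,n}$ (and symmetrically in $E^{-}_{m,n}$): being contained in $E^{+}_{m,n}$, $\Sigma$ is disjoint from $C_{m,n}$, and comparing it with the leaves $\{t\,\Sigma^{+}_{m,n}\}_{t>0}$ of the foliation of $E^{+}_{m,n}$ and sliding $t$ to a first contact, the strong maximum principle forces $\Sigma=t\,\Sigma^{+}_{m,n}$ for that $t$, which the normalisation pins to $t=1$. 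I expect this last step to be the main obstacle: getting the sliding started requires knowing that $\Sigma$ and the leaves are ordered near infinity, and both are merely asymptotic to $C_{m,n}$, so one needs their leading asymptotics over $C_{m,n}$, governed by the indicial roots; this is exactly where the hypothesis $m+n\ge 8$ enters, as it is the range in which $\Lambda_{0}=\sqrt{\tfrac{N^{2}-8N+8}{4}}$ is real and the approach to the cone is polynomial and monotone, rather than the oscillatory behaviour occurring for $m+n\le 7$. If instead one argues entirely through the profile ODE, the same difficulty resurfaces as the global phase-plane analysis proving that the trajectory shot perpendicularly from an axis converges monotonically to the ray; once that monotonicity is in hand it delivers the count, the embeddedness, and $\zeta_{0}\neq 0$ all at once.
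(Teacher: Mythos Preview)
The paper does not prove this statement; Theorem~\ref{th-Al} is quoted verbatim from \cite{Mazet_2017} (note the citation in the theorem header), and no argument for it is supplied anywhere in the text. The paper merely \emph{uses} the result, together with Propositions~3 and~5 of \cite{Mazet_2017}, in Sections~4 and~5. So there is no ``paper's own proof'' to compare your proposal against.

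That said, your sketch is a coherent reconstruction of why the result is true. Reducing to Theorem~\ref{th-Sigma} after checking that $C_{m,n}$ is minimising for $m+n\ge 8$ is the right starting point, and your invariance-by-uniqueness argument for the $O(m)\times O(n)$ symmetry is clean. Your foliation argument for $\zeta_0\neq 0$ via Euler's identity on the Hardt--Simon leaf parameter is correct and elegant; the equivalent profile-curve formulation you give (monotonicity of the polar angle along $\gamma$) is exactly how \cite{Mazet_2017} phrases it. One caution: Theorem~\ref{th-Sigma} gives uniqueness only among \emph{area-minimising} hypersurfaces, whereas Theorem~\ref{th-Al} asserts uniqueness among \emph{smooth minimal} hypersurfaces in $E^\pm_{m,n}$; your sliding/maximum-principle upgrade is the right idea to close that gap, and you correctly flag that ordering the competitor against the leaves near infinity is where the work lies --- in \cite{Mazet_2017} this is handled entirely at the level of the profile ODE (global phase-plane analysis of trajectories shot from the axes), which is more self-contained than the PDE sliding argument you outline.
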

In dimension $4\le m+n=N+1\le 7$, we have the following existence result about minimal hypersurfaces.

\begin{theorem}[\cite{alencar2005m, Mazet_2017}]
\label{th-Sigma-low-dim}
Let $m,n\ge 2$, $m+n\le 7$. Then there exists a unique complete, embedded, $O(m)\times O(n)$ invariant minimal hypersurface $\Sigma_{m,n}$ such that 
\begin{enumerate}
\item \label{Sigma-as} $\Sigma_{m,n}$ is asymptotic to the cone $C_{m,n}$ at infinity,
\item \label{Sigma-osc} $\Sigma_{m,n}$ intersects $C_{m,n}$ infinitely many times,
\item \label{Sigma-ort} $\Sigma_{m,n}$ meets $\R^m\times\{0\}$ orthogonally,
\item \label{Sigma-dist} $dist(\Sigma_{m,n}, 0)=1$.
\end{enumerate}
\end{theorem}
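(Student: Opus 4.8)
The plan is to reduce the construction to a planar ODE analysis exploiting the $O(m)\times O(n)$-symmetry. Write $\xi=(x,y)\in\R^{m}\times\R^{n}$ and $(s,t):=(|x|,|y|)$, so that an $O(m)\times O(n)$-invariant hypersurface is encoded by a \emph{profile curve} $\gamma$ in the open quadrant $Q:=\{(s,t):s>0,\,t>0\}$, the point $(s,t)$ corresponding to the orbit $\mathbb{S}^{m-1}_{s}\times\mathbb{S}^{n-1}_{t}\subset\R^{m}\times\R^{n}$. A direct computation shows that the $N$-dimensional volume ($N=m+n-1$) of the portion of the hypersurface lying over an arc of $\gamma$ equals a dimensional constant times $\int_{\gamma}s^{m-1}t^{n-1}\,d\ell$, where $d\ell$ is Euclidean arclength. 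Hence the hypersurface is minimal if and only if $\gamma$ is a reparametrized geodesic of the conformally flat (singular) metric $\bar g:=(s^{m-1}t^{n-1})^{2}(ds^{2}+dt^{2})$ on $Q$, and the Lawson cone $C_{m,n}$ corresponds to the ray $L_{0}:=\{(s,t)\in Q:(n-1)s^{2}=(m-1)t^{2}\}$, which is a $\bar g$-geodesic. Smoothness of the hypersurface across $\R^{m}\times\{0\}=\{t=0\}$ (respectively across $\{0\}\times\R^{n}$) forces $\gamma$ to meet the $s$-axis (respectively the $t$-axis) orthogonally, exactly as a surface of revolution must meet its axis.

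Next I would carry out the classical phase-plane reduction. Introduce polar coordinates $(s,t)=r(\cos\theta,\sin\theta)$, $r>0$, $\theta\in(0,\pi/2)$, and use $\tau:=\log r$ as parameter along $\gamma$ (legitimate on the portion where $r$ is monotone). Because the weighted-length functional $\int r^{N-1}(\cos\theta)^{m-1}(\sin\theta)^{n-1}\sqrt{(dr)^{2}+r^{2}(d\theta)^{2}}$ becomes, after the substitution $r=e^{\tau}$, an integral whose only explicit $\tau$-dependence is a global prefactor $e^{N\tau}$, the Euler--Lagrange equation for $\theta=\theta(\tau)$ is \emph{autonomous} of second order, i.e.\ a first-order autonomous system on $\{(\theta,\dot\theta):\theta\in(0,\pi/2)\}$ (the radius $r$ being decoupled by the dilation invariance). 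Its unique equilibrium is the point $P_{0}=(\theta_{0},0)$, where $\theta_{0}\in(0,\pi/2)$ is the angle of the ray $L_{0}$. The linearization at $P_{0}$ is controlled by the stability of $C_{m,n}$: the link of $C_{m,n}$ is a minimal product of spheres with $|A_{\Gamma}|^{2}\equiv N-1$ and bottom Jacobi eigenvalue $\lambda_{0}=-(N-1)$ attained by the ($O(m)\times O(n)$-invariant) constant eigenfunction, so the eigenvalues at $P_{0}$ correspond in a one-to-one fashion to the indicial roots $-\tfrac{N-2}{2}\pm\Lambda_{0}$ of $C_{m,n}$, and they are complex precisely when $\Lambda_{0}$ is imaginary, i.e.\ when
\begin{equation}\notag
\Big(\tfrac{N-2}{2}\Big)^{2}+\lambda_{0}=\tfrac{1}{4}\big(N^{2}-8N+8\big)<0,
\end{equation}
that is, exactly for $N\le 6$, i.e.\ for $4\le m+n\le 7$. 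In this range $P_{0}$ is a \emph{stable focus} (the eigenvalues have negative real part since $N\ge 3$).

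Then I would run the shooting argument. By the dilation invariance we may start $\gamma$ from $(1,0)$ on the $s$-axis with vertical initial velocity (forced by smoothness), and rescale at the end so that $\mathrm{dist}(\Sigma_{m,n},0)=1$. The core analytic claim is that the resulting trajectory stays in a compact subset of the admissible strip --- in particular $\gamma$ never reaches the $t$-axis and $r$ is strictly increasing along $\gamma$ past the initial point, since $r'$ vanishes only where $\gamma$ is tangent to a circle about $0$, which does not recur --- so that Poincar\'e--Bendixson applies; excluding periodic and homoclinic $\omega$-limit sets (for instance by a Dulac-type criterion, using that $P_{0}$ is the only equilibrium) forces $\omega(\gamma)=\{P_{0}\}$. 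Unwinding this yields conclusions (1)--(4): $\theta(\tau)\to\theta_{0}$ and $r\to\infty$, so $\Sigma_{m,n}$ is asymptotic to $C_{m,n}$, with normal graph $w\sim r^{-\frac{N-2}{2}}\sin(\beta\log r+c)$ for some $\beta>0$, which tends to $0$ (compatibly with (H3)) and changes sign infinitely often --- hence $\Sigma_{m,n}$ crosses $C_{m,n}$ infinitely many times (item (2)) and item (1) holds; strict monotonicity of $r$ makes the profile curve simple, so $\Sigma_{m,n}$ is embedded; the orthogonal start is item (3); item (4) is the chosen normalization. Uniqueness follows because item (3) and smoothness force $\gamma$ to meet $\{t=0\}$ orthogonally --- and only once, otherwise $\gamma$ would be a compact-type arc joining two axis points and could not be asymptotic to $C_{m,n}$ --- while item (4) fixes the scale, so $\gamma$ is uniquely determined by ODE uniqueness.

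The main obstacle is the global phase-plane analysis in the third step: trapping the shooting trajectory in a compact region of $\{\theta\in(0,\pi/2)\}$ and excluding periodic and homoclinic limit behavior, so as to conclude convergence to $P_{0}$. This is precisely where the low dimension enters: the \emph{instability} of $C_{m,n}$ for $4\le m+n\le 7$ --- equivalently $(\tfrac{N-2}{2})^{2}+\lambda_{0}<0$, equivalently the indicial roots being complex --- is what makes $P_{0}$ a focus rather than a node, and hence is exactly what produces the infinitely many intersections with the cone in item (2). By contrast, the reduction to the weighted-geodesic ODE, the indicial-root computation, the local normal forms near $\{t=0\}$ and near $P_{0}$, embeddedness, and uniqueness are all routine once this dynamical picture is in place.
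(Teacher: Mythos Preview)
The paper does not give its own proof of this statement: Theorem~\ref{th-Sigma-low-dim} is quoted from \cite{alencar2005m,Mazet_2017} and used as a black box, so there is no in-paper argument to compare against. Your outline is precisely the strategy of those references: reduce via the $O(m)\times O(n)$-symmetry to a profile curve in the quadrant, rewrite the minimal-surface equation as an autonomous planar system in the $(\theta,\dot\theta)$ variables after the Emden--Fowler substitution $r=e^{\tau}$, identify the Lawson ray as the unique equilibrium, compute that its linearization has complex eigenvalues exactly when $\big(\tfrac{N-2}{2}\big)^{2}-(N-1)<0$ (i.e.\ $4\le m+n\le 7$), and then run a shooting/Poincar\'e--Bendixson argument from the orthogonal start on $\{t=0\}$ to obtain a trajectory spiraling into the focus. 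Your identification of the delicate point is also accurate: the nontrivial work in \cite{alencar2005m,Mazet_2017} is the global trapping (showing the orbit never hits $\theta=0$ or $\theta=\pi/2$ again, that $r$ is eventually monotone, and excluding limit cycles), which you flag but do not carry out; everything else---the indicial-root computation, the oscillatory normal form $w\sim r^{-\frac{N-2}{2}}\sin(\beta\log r+c)$, embeddedness from simplicity of the profile, and uniqueness from the normalized shooting data---is routine once that is in hand.
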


Our next main result is concerned with examples of $S-$dilation nondegenerate minimal surfaces.

\begin{theorem}\label{theo:examples} 
    The hypersurfaces constructed in Theorems \ref{th-Sigma} and \ref{th-Al} are dilation non-degenerate. The hypersurfaces constructed in Theorem \ref{th-Sigma-low-dim} are $S-$dilation nondegenerate with $S=O(m)\times O(n)$.
\end{theorem}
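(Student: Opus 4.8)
The plan is to verify the defining conditions of ($S$-)dilation-nondegeneracy for each family by combining the structural results of \cite{HS,Mazet_2017,alencar2005m} with the decay analysis of $\zeta_0$. Recall that $\Sigma$ is ($S$-)dilation nondegenerate provided (i) $\bar\nu>2-N$, and (ii) $D_\nu(\Sigma)=\{0\}$ (resp.\ $D_\nu(\Sigma,S)=\{0\}$) for every $\nu<\bar\nu$. Condition (i) is the quantitative statement that $\zeta_0(y)=y\cdot\nu_\Sigma(y)$ does not decay faster than the rate $|y|^{2-N}$; condition (ii) says no Jacobi field (resp.\ $S$-invariant Jacobi field) beats $\zeta_0$.

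First I would treat the minimising/area-minimising cases of Theorems \ref{th-Sigma} and \ref{th-Al}. For these hypersurfaces the underlying cone $C$ (resp.\ $C_{m,n}$) is strictly stable in dimensions $N+1\ge 8$, so the first eigenvalue $\lambda_0$ of $-J_\Gamma$ satisfies $(\tfrac{N-2}{2})^2+\lambda_0>0$, i.e.\ $\Lambda_0>0$. The standard separation-of-variables / indicial-roots analysis on the asymptotically conical end of $\Sigma$ then forces every Jacobi field in $L^\infty(\Sigma)$ that decays to have leading order governed by an indicial root; the slowest nontrivial decaying mode is $|y|^{-\frac{N-2}{2}+\Lambda_0}$, achieved by $\zeta_0$ (since $\zeta_0$ is precisely the dilation field, which on a strictly stable cone decays polynomially with this exponent, as recalled in the discussion around \eqref{def:rate_decay_zeta0} and in Remark~3.2 of \cite{agudelo2024jacobi}). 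Hence $\bar\nu=-\tfrac{N-2}{2}+\Lambda_0>2-N$ because $\Lambda_0>0>-\tfrac{N-2}{2}$ gives $\bar\nu>-\tfrac{N-2}{2}>2-N$; this settles (i). For (ii), a Jacobi field $\phi$ with $\phi(1+|y|)^{-\nu}\in L^\infty$ and $\nu<\bar\nu$ must, by the indicial-root dichotomy on the conical end, decay faster than every admissible mode, hence be $O(|y|^{-\nu'})$ for $\nu'$ arbitrarily large on the end; a unique-continuation / maximum-principle argument (using minimality and the fact proved in Theorem~\ref{th-Al} that $\zeta_0$ never vanishes, so $\Sigma$ has a positive Jacobi field and $J_\Sigma$ satisfies a maximum principle) then forces $\phi\equiv 0$. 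This gives dilation-nondegeneracy without any symmetry restriction.

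Next I would handle the low-dimensional oscillating hypersurfaces $\Sigma_{m,n}$ of Theorem \ref{th-Sigma-low-dim}, where $4\le m+n\le 7$. Here the cone $C_{m,n}$ is still strictly stable (Simons), so again $\Lambda_0>0$ and the computation of $\bar\nu$ proceeds as above, giving $\bar\nu=-\tfrac{N-2}{2}+\Lambda_0>2-N$, hence (i). The difference is that $\R^{N+1}\setminus\Sigma_{m,n}$ is connected and $\Sigma_{m,n}$ oscillates across the cone, so $\zeta_0$ changes sign and one cannot run a global maximum principle; this is why only the $S=O(m)\times O(n)$-invariant statement is claimed. For (ii) I would restrict to the $O(m)\times O(n)$-invariant sector: an $S$-invariant Jacobi field reduces to an ODE problem in the single profile variable parametrising the generating curve, and one classifies its solutions near the two ends (the conical end at infinity and the end where $\Sigma_{m,n}$ meets $\R^m\times\{0\}$ orthogonally, condition \ref{Sigma-ort}). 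The admissible $S$-invariant decaying solutions on the conical end again have leading exponent $-\tfrac{N-2}{2}+\Lambda_0$, realised by $\zeta_0$; matching with the regular behaviour forced at the orthogonal-intersection end (and using that the problem is a second-order ODE, so its $S$-invariant decaying solution space is at most one-dimensional) shows $D_\nu(\Sigma_{m,n},S)=\{0\}$ for $\nu<\bar\nu$.

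The main obstacle I anticipate is item (i), namely ruling out \emph{super-polynomial} or anomalously fast decay of $\zeta_0$ and pinning down $\bar\nu$ exactly. On strictly stable cones this follows from the ODE analysis of the radial Jacobi operator (the relevant Bessel-type asymptotics with real exponent $\pm\Lambda_0$), but one must be careful that $\Sigma$ is only \emph{asymptotic} to $C$ via (H3), so the linearised equation on the end is a perturbation of the exact conical one; controlling the perturbation $w$ in $C^2(\Gamma)$, $\|w(r,\cdot)\|_{C^2(\Gamma)}\to 0$, and showing it does not destroy the leading asymptotics of $\zeta_0$, is the technical heart. Once the asymptotic expansion of $\zeta_0$ on the end is secured (with leading term a nonzero multiple of $|y|^{-\frac{N-2}{2}+\Lambda_0}$ times the first eigenfunction on $\Gamma$, which is positive), both $\bar\nu>2-N$ and the classification in (ii) follow from the indicial-root framework set up before Theorem \ref{prop-right-inverse-Jacobi}, together with Theorem \ref{th_jacobi-geom} to exclude geometric Jacobi fields decaying faster than $\zeta_0$. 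A secondary subtlety is the second end of $\Sigma_{m,n}$: one must confirm that the orthogonality condition \ref{Sigma-ort} forces $S$-invariant Jacobi fields to be regular there (no logarithmic or blow-up modes admissible), which is where the precise geometry from \cite{alencar2005m, Mazet_2017} enters.
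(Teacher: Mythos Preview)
Your treatment of the cases from Theorems \ref{th-Sigma} and \ref{th-Al} is essentially on the right track and matches the paper's route via Proposition \ref{prop-dil-nondeg}: stability of $\Sigma$ (from area-minimality in Theorem \ref{th-Sigma}, or from the positive Jacobi field $\zeta_0$ in Theorem \ref{th-Al}) together with $\Sigma\cap C=\emptyset$ yields dilation-nondegeneracy. One caveat: for Theorem \ref{th-Sigma} you assert $\bar\nu=-\tfrac{N-2}{2}+\Lambda_0$, but when the minimising cone is not \emph{strictly} minimising one in fact has $\bar\nu=-\tfrac{N-2}{2}-\Lambda_0$ (Theorem 3.2 of \cite{HS}); this does not spoil $\bar\nu>2-N$, but your identification of $\bar\nu$ is not correct in that sub-case.

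The genuine gap is in the low-dimensional case of Theorem \ref{th-Sigma-low-dim}. You write that ``the cone $C_{m,n}$ is still strictly stable (Simons), so again $\Lambda_0>0$''. This is false. For $\Gamma=\sqrt{\tfrac{m-1}{N-1}}\,\mS^{m-1}\times\sqrt{\tfrac{n-1}{N-1}}\,\mS^{n-1}$ one has $\lambda_0=-(N-1)$, hence
\[
\Lambda_0^2=\Big(\frac{N-2}{2}\Big)^2-(N-1)<0\qquad\text{for }3\le N\le 6,
\]
so $\Lambda_0$ is purely imaginary and $C_{m,n}$ is \emph{unstable} in this range. This is exactly why $\Sigma_{m,n}$ crosses $C_{m,n}$ infinitely many times (property (\ref{Sigma-osc}) of Theorem \ref{th-Sigma-low-dim}): the $O(m)\times O(n)$-invariant Jacobi fields on the conical end are oscillatory, of order $r^{-\frac{N-2}{2}}$ times a bounded oscillating factor, not $r^{-\frac{N-2}{2}\pm\Lambda_0}$ with real $\Lambda_0$. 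Consequently $\bar\nu=-\tfrac{N-2}{2}$ with ${\rm Re}(\Lambda_0)=0$, and your claim that the ``admissible $S$-invariant decaying solutions on the conical end again have leading exponent $-\tfrac{N-2}{2}+\Lambda_0$, realised by $\zeta_0$'' is meaningless in this regime. The ODE reduction you propose is the correct framework, but the asymptotic analysis must be redone to accommodate the oscillatory indicial behaviour; the paper handles this by invoking Theorem 5.2 of \cite{agudelo2024jacobi}, where precisely that analysis is carried out.
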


Next, we apply our linear Theory to solve the equation
\begin{equation}
\label{eq-trA3}
J_\Sigma \phi={\rm tr}(A_\Sigma^3)\qquad\text{in }\Sigma,
\end{equation}.

In a subsequent work, we will use Theorem \ref{prop-right-inverse-Jacobi} and the fact that equation \eqref{eq-trA3} naturally in the construction of solutions to the Allen-Cahn equation

\begin{equation}\label{allen-cahn}
-\Delta u=u-u^3\qquad\text{in }\R^N.
\end{equation}
to provide a general construction of stable and unstable solutions of the Allen-Cahn equation with nodal sets closed to minimal cones. Note that, the hypersurface $\Sigma$ constructed in Theorem \ref{the-deg-hyp} is not useful for the construction of solutions to the Allen-Cahn equation \eqref{allen-cahn}, since $\R^{N+1}\setminus\Sigma$ is connected.\\

\color{black}The links between equation \eqref{eq-trA3} and the Allen-Cahn equation can be seen in several papers about the construction of solutions to the Allen-Cahn equation (see for instance \cite{agudelo2022doubling,agudelo2022k,del2011giorgi,del2013entire}).\\

Finally, the decay rate of the solution $\phi$ predicted by our general theory is not always sharp. In fact, using the symmetries present  in the hypersurfaces $\Sigma$ in Theorems \ref{th-Al} or \ref{th-Sigma-low-dim}, we can reduce equation \eqref{eq-trA3} to an ODE. In case $N=3,\,4$, the ODE analysis gives a better estimate of the decay of the solution at infinity.\\

The plan of the paper is the following: in Section $2$ we will introduce the suitable functional setting and the Fredholm theory to study the Jacobi equation $J_\Sigma\phi=f$ in a general minimal hypersurface $\Sigma$ which is asymptotic to a nontrivial cone at infinity. In Section $3$ we will prove Theorem \ref{th_jacobi-geom} and use the $S$-dilation-non-degeneracy to solve such an equation. In Section $4$ we prove Theorems \ref{the-deg-hyp} and \ref{theo:examples}, that is we prove that the hypersurfaces constructed in Theorems \ref{th-Sigma}, \ref{th-Al} and \ref{th-Sigma-low-dim} fulfill our notion of nondegeneracy. Moreover, we treat equation \eqref{eq-trA3}, at least for some of these hypersurfaces. \color{black}In section $5$ we will discuss the ODE analysis for $O(m)\times O(n)$-invariant hypersurfaces and the improvement of decay when it is possible.

\section{Fredholm Theory for the Jacobi equation}

Consider the {\it Jacobi equation} $J_{\Sigma}\phi =f$ in $\Sigma$, where $f:\Sigma \to \R$ is given. Let us solve for $\phi$ in the form 
 \begin{equation}\label{eqn:changevar_phitou}
 \phi(y):=|y|^{-\frac{N-2}{2}}u(y) \quad \hbox{for}\quad y\in \Sigma\setminus\{0\}.\end{equation}
 
From Lemma 3.1 in \cite{agudelo2024jacobi}, the equation for $u$ reads as 
\begin{equation}\label{eqn:Jac_transf}
\mathcal{L}u=g\qquad\text{in $\Sigma\setminus \{0\}$,}
\end{equation}
where 
\begin{equation}\label{eqn:changvar_Jacobieqn}
\mathcal{L}u:=|y|^N\diver(|y|^{2-N}\nabla_\Sigma u)+
\left(|y|^2|A_\Sigma|^2+|y|^{\frac{N+2}{2}}\Delta_\Sigma(|y|^{-\frac{N-2}{2}})\right)u    
\end{equation}
and
$$
g(y):=|y|^{\frac{N+2}{2}}f(y).
$$

The strategy to solve the Jacobi equation for $\phi$ is motivated by the developments in \cite{P} and those in section 3 in \cite{agudelo2024jacobi}. The idea consists, first on solving the equation \eqref{eqn:Jac_transf} for $u$ and then translate the properties to the corresponding solution $\phi$ by pulling back the change of variable \eqref{eqn:changevar_phitou}. We begin by introducing the functional analytic setting with which we work.\\

Consider the space
$$
L^2(\Sigma,|y|^{-N}):=\{v\in L^2_{loc}(\Sigma)\,:\,|y|^{\frac{-N}{2}}v\in L^2(\Sigma)\}.
$$

Observe that $L^2(\Sigma,|y|^{-N})$ is a Hilbert space with the scalar product 
\begin{equation}
\label{def-duality-prod}
\langle v,u\rangle:=\int_\Sigma v u |y|^{-N} d\sigma, \qquad  v,u\in L^2(\Sigma,|y|^{-N}).
\end{equation}

Next, for $\delta\in\R$ arbitrary but fixed we set $\Gamma_{\delta}(y):=(1 +|y|^2)^\frac{\delta}{2}$ for $y\in \Sigma$. Observe that $\Gamma_{\delta}:\Sigma \to \R$ is a smooth function such that $\Gamma_{\delta}^{-1}=\Gamma_{-\delta}$ and $\Gamma_0=1$. 

Consider also the space 
$$
L^2_{\delta}(\Sigma,|y|^{-N}):=\{v\in L^2_{loc}(\Sigma)\,:\, \Gamma_{-\delta}|y|^{\frac{-N}{2}}v\in L^2(\Sigma)\}
$$
with the inner product
\begin{equation}
\label{def-scalar-prod}
\langle v,u\rangle_{\delta}:=\int_\Sigma \Gamma_{-2\delta} v u |y|^{-N} d\sigma, \qquad  v,u\in L^2_{\delta}(\Sigma,|y|^{-N})
\end{equation}
and the norm $\|\cdot\|^2_{L^2_{\delta}(\Sigma,|y|^{-N})}:=\langle\cdot,\cdot \rangle_{\delta}$. Observe that $L^2(\Sigma,|y|^{-N})=L^2_0(\Sigma,|y|^{-N})$.\\

In what follows, $X'$ denotes the topological dual space of a given Banach space $X$. We now verify that the dual space of $L^2_\delta(\Sigma,|y|^{-N})$ is the space $L^2_{-\delta}(\Sigma,|y|^{-N})$. 

\begin{lemma}
\label{lemma-dual}
Let $\delta \in \R$. In the above notations, $(L^2_\delta(\Sigma,|y|^{-N}))'=L^2_{-\delta}(\Sigma,|y|^{-N}))$ and the duality product is given by \eqref{def-duality-prod}.   
\end{lemma}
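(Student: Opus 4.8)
The plan is to identify the dual of the weighted Hilbert space $L^2_\delta(\Sigma,|y|^{-N})$ by reducing everything to the Riesz representation theorem applied to the unweighted space $L^2(\Sigma,|y|^{-N})=L^2_0(\Sigma,|y|^{-N})$. First I would observe that the map $T_\delta:L^2_\delta(\Sigma,|y|^{-N})\to L^2(\Sigma,|y|^{-N})$ defined by $T_\delta v:=\Gamma_{-\delta} v$ is a linear isometric isomorphism; indeed, $\Gamma_{-\delta}$ is a smooth positive function with $\Gamma_{-\delta}^{-1}=\Gamma_\delta$, so multiplication by it is bijective, and by the very definition of $\|\cdot\|_{L^2_\delta(\Sigma,|y|^{-N})}$ one has $\|T_\delta v\|_{L^2(\Sigma,|y|^{-N})}=\|v\|_{L^2_\delta(\Sigma,|y|^{-N})}$.

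Next I would use the fact that an isometric isomorphism $T_\delta$ induces an isometric isomorphism between the dual spaces via $\Lambda\mapsto \Lambda\circ T_\delta$. Since $L^2(\Sigma,|y|^{-N})$ is a Hilbert space with the inner product \eqref{def-duality-prod}, the Riesz representation theorem identifies its dual with itself: every bounded linear functional on $L^2(\Sigma,|y|^{-N})$ is of the form $w\mapsto \langle w,h\rangle=\int_\Sigma w h\,|y|^{-N}\,d\sigma$ for a unique $h\in L^2(\Sigma,|y|^{-N})$. Composing, a generic bounded linear functional $F$ on $L^2_\delta(\Sigma,|y|^{-N})$ is $F(v)=\langle T_\delta v, h\rangle=\int_\Sigma \Gamma_{-\delta} v\, h\,|y|^{-N}\,d\sigma$. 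I would then set $u:=\Gamma_{-\delta} h$ and check that $u\in L^2_{-\delta}(\Sigma,|y|^{-N})$, since $\Gamma_{\delta}|y|^{-N/2}u=\Gamma_{\delta}\Gamma_{-\delta}|y|^{-N/2}h=|y|^{-N/2}h\in L^2(\Sigma)$; moreover $\|u\|_{L^2_{-\delta}(\Sigma,|y|^{-N})}=\|h\|_{L^2(\Sigma,|y|^{-N})}=\|F\|$. With this substitution $F(v)=\int_\Sigma v\,u\,|y|^{-N}\,d\sigma=\langle v,u\rangle$, which is exactly the duality pairing \eqref{def-duality-prod}. Conversely, for any $u\in L^2_{-\delta}(\Sigma,|y|^{-N})$ the formula $v\mapsto \langle v,u\rangle$ defines a bounded functional on $L^2_\delta(\Sigma,|y|^{-N})$, with the pairing finite and controlled by Cauchy–Schwarz after writing $vu|y|^{-N}=(\Gamma_{-\delta}|y|^{-N/2}v)(\Gamma_{\delta}|y|^{-N/2}u)$. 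Uniqueness of the representative $u$ follows from the nondegeneracy of the pairing (if $\langle v,u\rangle=0$ for all $v$, take $v=\Gamma_{2\delta}u$ to conclude $u=0$ a.e.).

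There is no serious obstacle here; the only point requiring a little care is bookkeeping the weights so that the Cauchy–Schwarz splitting is symmetric between the $\delta$ and $-\delta$ factors and the $|y|^{-N}$ density is distributed as $|y|^{-N/2}\cdot|y|^{-N/2}$, ensuring both factors land in the plain $L^2(\Sigma)$. I would present the argument as: (1) $T_\delta$ is an isometric isomorphism onto $L^2_0$; (2) duals transform contravariantly under isometric isomorphisms; (3) Riesz representation on the Hilbert space $L^2_0(\Sigma,|y|^{-N})$; (4) translate the representative back through $\Gamma_{-\delta}$ and verify membership in $L^2_{-\delta}(\Sigma,|y|^{-N})$ together with the isometry of the identification; (5) note that the resulting pairing is precisely \eqref{def-duality-prod}, independent of $\delta$. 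This yields both the identification $(L^2_\delta(\Sigma,|y|^{-N}))'=L^2_{-\delta}(\Sigma,|y|^{-N})$ and the stated form of the duality product.
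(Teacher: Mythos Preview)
Your argument is correct and follows essentially the same route as the paper: both reduce to the Riesz representation theorem on $L^2_0(\Sigma,|y|^{-N})$ via the isometric isomorphism given by multiplication by $\Gamma_{\pm\delta}$, and then transport the Riesz representative back by another factor of $\Gamma_{-\delta}$ to land in $L^2_{-\delta}(\Sigma,|y|^{-N})$. Your write-up is in fact slightly more complete, since you also record the converse inclusion, the isometry of the identification, and the uniqueness of the representative, none of which the paper spells out explicitly.
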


\begin{proof}
Let $\psi\in \big(L^2_\delta(\Sigma,|y|^{-N})\big)'$ and consider  $l_{\psi}\in \big(L^2_0(\Sigma,|y|^{-N})\big)'$ defined by
$<l_{\psi},v>:= \psi(v\Gamma_\delta)$ for $v\in L^2_0(\Sigma,|y|^{-N})$. The Riesz representation Theorem implies that there exists a unique $w\in L^2_0(\Sigma,|y|^{-N})$ such that
$$\psi(v\Gamma_\delta)=<l_{\psi},v>=\langle w,v\rangle \quad \hbox{for } v\in L^2_0(\Sigma,|y|^{-N}).
$$

Set $u:=w\Gamma_{-\delta}\in L^2_{-\delta}(\Sigma,|y|^{-N})$ and for any $z\in L^2_\delta(\Sigma,|y|^{-N})$ write 
$$
v:=\Gamma_{-\delta}z\in L^2_0(\Sigma,|y|^{-N})
$$
and evaluate
$$
\psi(z)=l_{\psi}(v\Gamma_\delta)=\langle w,v\rangle=\langle w\Gamma_{-\delta},z\rangle=\langle u,z\rangle.
$$

This concludes the proof. 
\end{proof}

Let $l\in \mathbb{N}$ and consider now the {\it weighted Sobolev space}
$$
\mathcal{W}^{l,2}_{\delta}(\Sigma,|y|^{-N}):=\{w\in W^{l,2}_{loc}(\Sigma)\,:\,|y|^j\nabla_{\Sigma}^{(j)}w\in L^2_{\delta}(\Sigma,|y|^{-N}) \quad 0\leq j\leq l\}
$$
with the norm
\begin{equation}\label{def-norm_W2}
\|w\|_{\mathcal{W}^{l,2}_\delta(\Sigma,|y|^{-N})}= \Big(\sum_{j=0}^l \big\| |y|^{j}\nabla_{\Sigma}^{(j)} w\big\|^2_{L^2_{\delta}(\Sigma,|y|^{-N})}\Big)^{\frac{1}{2}}.    
\end{equation}

We introduce next the operator
$A_\delta:\,\mathcal{W}^{2,2}_\delta(\Sigma,|y|^{-N})\to L^2_\delta(\Sigma,|y|^{-N})$ defined by
\begin{equation}\label{def:operator_A_delta}
w\mapsto  A_{\delta}w:=\mathcal{L}w \quad \hbox{for}\quad w\in \mathcal{W}^{2,2}_{\delta}(\Sigma,|y|^{-N}).
\end{equation}

Observe that $A_{\delta}$ is continuous with respect to the norms in \eqref{def-norm_W2} and the one  induced by the inner product in \eqref{def-scalar-prod}. 

\vskip 3pt

However, in what follows and unless stated otherwise, we consider ${\rm Dom}(A_{\delta}):=W^{2,2}_{\delta}(\Sigma,|y|^{-N})$ endowed with the inner product defined in \eqref{def-scalar-prod}. Since ${\rm Dom}(A_{\delta})$ dense in $L^2_{\delta}(\Sigma,|y|^{-N})$, the operator $A_{\delta}$ is an unbounded operator in $L^2_{\delta}(\Sigma,|y|^{-N})$. 

\vskip 3pt
Next, we consider the adjoint operator of $A_{\delta}$, namely $A_{\delta}^*:{\rm Dom}(A_{\delta}^*)\to L^2_{-\delta}(\Sigma,|y|^{-N})$ defined by the duality
$$
\langle A_{\delta}^*v,u\rangle:=\langle v,A_{\delta} u\rangle \quad \hbox{for } v\in {\rm Dom}(A_{\delta}^*),\,\, u\in W^{2,2}_{\delta}(\Sigma,|y|^{-N}).
$$

In view of Lemma \ref{lemma-dual}, \begin{equation}\notag
\begin{aligned}
{\rm Dom}(A^\star_\delta)=\{v\in& L^2_{-\delta}(\Sigma,|y|^{-N}):\\
&\exists\,z\in L^2_{-\delta}(\Sigma,|y|^{-N}): \forall\,u\in\mathcal{W}^{2,2}_\delta(\Sigma,|y|^{-N}):
\langle \mathcal{L} u,v\rangle=\langle u,z\rangle\}.
\end{aligned}
\end{equation}

This is in essence the functional analytic context in \cite{P}.

\begin{lemma}\label{lemma:adjoint_oper}
Let $\delta\in\R$. In the previous notations, the adjoint of $A_\delta$ is given by $A_\delta^\star=A_{-\delta}$.
\end{lemma}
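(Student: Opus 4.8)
The plan is to show the two inclusions ${\rm Dom}(A_\delta^\star)=\mathcal{W}^{2,2}_{-\delta}(\Sigma,|y|^{-N})$ and $A_\delta^\star=A_{-\delta}$ on that domain, exploiting that $\mathcal{L}$ is formally self-adjoint with respect to the weighted pairing $\langle\cdot,\cdot\rangle$ in \eqref{def-duality-prod}. First I would record the formal self-adjointness: writing $\mathcal{L}u=|y|^N\diver(|y|^{2-N}\nabla_\Sigma u)+V u$ with $V:=|y|^2|A_\Sigma|^2+|y|^{\frac{N+2}{2}}\Delta_\Sigma(|y|^{-\frac{N-2}{2}})$, one has for $u,v\in C_c^\infty(\Sigma\setminus\{0\})$
\[
\langle \mathcal{L}u,v\rangle=\int_\Sigma \diver(|y|^{2-N}\nabla_\Sigma u)\,v\,d\sigma+\int_\Sigma V u v |y|^{-N}d\sigma=-\int_\Sigma |y|^{2-N}\nabla_\Sigma u\cdot\nabla_\Sigma v\,d\sigma+\int_\Sigma V u v |y|^{-N}d\sigma,
\]
which is symmetric in $u,v$; hence $\langle \mathcal{L}u,v\rangle=\langle u,\mathcal{L}v\rangle$ whenever one of the two is, say, smooth and compactly supported away from the origin. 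This immediately gives $A_{-\delta}\subset A_\delta^\star$: if $v\in\mathcal{W}^{2,2}_{-\delta}(\Sigma,|y|^{-N})$, then $z:=\mathcal{L}v\in L^2_{-\delta}(\Sigma,|y|^{-N})$ by the continuity of $A_{-\delta}$ noted after \eqref{def:operator_A_delta}, and one checks $\langle \mathcal{L}u,v\rangle=\langle u,z\rangle$ for all $u\in\mathcal{W}^{2,2}_\delta(\Sigma,|y|^{-N})$ by approximating $u$ (and $v$) in the respective weighted Sobolev norms by functions in $C_c^\infty(\Sigma\setminus\{0\})$ and passing to the limit in the symmetric bilinear expression above, using that the cross terms $\int |y|^{2-N}\nabla_\Sigma u\cdot\nabla_\Sigma v$ and $\int V uv|y|^{-N}$ are continuous on $\mathcal{W}^{2,2}_\delta\times\mathcal{W}^{2,2}_{-\delta}$ (the weights $|y|^{-N}$, $|y|^{\pm\delta}$ pair up correctly and $V=O(|y|^2 |A_\Sigma|^2)$ is controlled since $|A_\Sigma|$ is bounded by $|y|^{-1}$ in the asymptotic region and $\Sigma$ is smooth).

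The reverse inclusion $A_\delta^\star\subset A_{-\delta}$ is the substantive part. Given $v\in{\rm Dom}(A_\delta^\star)$ with associated $z\in L^2_{-\delta}(\Sigma,|y|^{-N})$ satisfying $\langle\mathcal{L}u,v\rangle=\langle u,z\rangle$ for all $u\in\mathcal{W}^{2,2}_\delta(\Sigma,|y|^{-N})$, I would first test against $u\in C_c^\infty(\Sigma\setminus\{0\})\subset \mathcal{W}^{2,2}_\delta$ to conclude that $\mathcal{L}v=z$ holds in the distributional sense on $\Sigma\setminus\{0\}$ (using formal self-adjointness to move $\mathcal{L}$ onto $v$). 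By interior elliptic regularity for the uniformly elliptic operator $\mathcal{L}$ on compact subsets of $\Sigma\setminus\{0\}$, $v\in W^{2,2}_{loc}(\Sigma\setminus\{0\})$ and $\mathcal{L}v=z$ a.e. there. It then remains to show the weighted bounds $|y|^j\nabla_\Sigma^{(j)}v\in L^2_{-\delta}(\Sigma,|y|^{-N})$ for $j=0,1,2$, i.e.\ $v\in\mathcal{W}^{2,2}_{-\delta}(\Sigma,|y|^{-N})$; membership $v\in L^2_{-\delta}$ is given, the bound on $|y|^2\nabla^2_\Sigma v$ follows from $\mathcal{L}v=z\in L^2_{-\delta}$ together with scaled interior elliptic estimates on annuli $\{|y|\sim 2^k\}$ (rescaling to unit size, applying standard $W^{2,2}$ estimates, and summing the geometric series in $k$ with the weight $|y|^{-N\pm\delta}$), and the intermediate gradient bound on $|y|\nabla_\Sigma v$ follows by interpolation between the $j=0$ and $j=2$ bounds on each annulus. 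The potential subtlety at the origin (whether the distributional identity on $\Sigma\setminus\{0\}$ forces no singular mass at $0$) does not arise because $v$ and $z$ are honest $L^2$ functions and the weighted $W^{2,2}$ estimates are obtained away from $0$ and glued with the local smoothness of $\Sigma$ near $0$; so the identity $\mathcal{L}v=z$ and the membership $v\in\mathcal{W}^{2,2}_{-\delta}$ extend across $0$.

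The main obstacle I anticipate is the density/approximation step underpinning $A_{-\delta}\subset A_\delta^\star$ combined with making the scaled elliptic estimates uniform: one must verify that $C_c^\infty(\Sigma\setminus\{0\})$ is dense in $\mathcal{W}^{2,2}_\delta(\Sigma,|y|^{-N})$ — which requires cutting off near both $0$ and $\infty$ without blowing up the weighted norm, a construction that is standard for conically-asymptotic manifolds in dimension $N\ge 3$ but needs the logarithmic cutoff trick at the origin — and that the constants in the annular elliptic estimates are independent of the annulus (true by the scale invariance of $\mathcal{L}$'s leading part $|y|^N\diver(|y|^{2-N}\nabla_\Sigma\cdot)$ under $y\mapsto\lambda y$, modulo the lower-order term $Vu$ whose coefficient $V$ is uniformly bounded on dyadic annuli thanks to (H3) and the bound $|A_\Sigma(y)|\le c(1+|y|)^{-1}$). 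Once these uniformities are in place, the two inclusions close and $A_\delta^\star=A_{-\delta}$ follows.
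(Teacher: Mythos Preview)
Your proposal is correct and matches the paper's two-inclusion strategy; the only difference is that the paper obtains the weighted $W^{2,2}$ estimate on $\Sigma_R$ by passing to Emden-Fowler coordinates $r=e^t$ (where $\mathcal{L}$ becomes an asymptotically constant-coefficient operator on the cylinder $(t_0,\infty)\times\Gamma$, see \eqref{eq-SigmaR}--\eqref{est-remainder}) and then invoking Proposition~8.3.1 of \cite{P}, rather than via your dyadic-annuli rescaling, which is the discrete analogue of the same idea. Your concern about the origin and the logarithmic cutoff there is unnecessary in this setting: $\Sigma$ has only one end (at infinity), and on the smooth compact piece $K_R=\Sigma\setminus\Sigma_R$ ordinary unweighted elliptic estimates apply, so density and regularity need only be handled at infinity.
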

\begin{proof}
It is enough to show that ${\rm Dom}(A_\delta^*)=\mathcal{W}^{2,2}_{-\delta}(\Sigma,|y|^{-N})$. In order to do so, we notice that the symmetry of $\mathcal{L}$ yields $\mathcal{W}^{2,2}_{-\delta}(\Sigma,|y|^{-N})\subset {\rm Dom}(A^\star_\delta)$.

\vskip 3pt
To prove the opposite inclusion we proceed as follows. Let $g$ be the metric tensor of $\Sigma$. First, using the fact that $g$ is smooth and positive on $\Sigma$, for any $R>0$, the restriction of $g$ to $K_R$ (see \eqref{eqn:split_Sigma}), namely $g_{K_R}$, is equivalent to the metric of the cylinder $(0,R)\times \Gamma$. Thus, standard elliptic estimates hold on $K_R$. 

\vskip 3pt
On the other hand, it follows from \eqref{Sigma-normal-graph} that
\begin{equation}\label{eqn:asympt_metric_Sigma_EmdenFowler}
g=e^{2t} (1 +o(1))(dt^2 + d\theta^2) \quad \hbox{as} \quad t\to + \infty,   
\end{equation}
where for $r\theta \in C$ we have written $r=e^{t}$ for $t$ large and where $d\theta^2$ denotes the metric tensor on the compact manifold $\Gamma$. 

\vskip 3pt
Now, given $v\in {\rm Dom}(A^\star_\delta)$, $v\in L^2_{-\delta}(\Sigma,|y|^{-N})$ and there exists $z\in L^2_{-\delta}(\Sigma,|y|^{-N})$ such that $v$ is a distributional solution to the equation $\mathcal{L}v=z$ in $\Sigma$. In turn, following the Remark 3.1 in \cite{agudelo2024jacobi}, writing $v(y)=\tilde{v}(t,\theta)$ for $y=e^t\theta + w(e^t,\theta)\nu_C(\theta)$ for $t>0$ large and $\theta \in \Gamma$, we have
\begin{equation}
\label{eq-SigmaR}
\mathcal{L} v = \partial_{tt} \tilde{v} + \Delta_{\Gamma} \tilde{v} + \Big(|A_{\Sigma}|^2 - \big(\frac{N-2}{2}\big)^2\Big)\tilde{v} + \mathcal{R}(\tilde{v}) \quad \hbox{in }\Sigma_{R}, 
\end{equation}
where 
\begin{equation}
\label{est-remainder}
|\mathcal{R}(\tilde{v})|\leq c e^{-\eta t}(|\tilde{v}|+|\partial_t\tilde{v}|+|\nabla_\Gamma\tilde{v}|+|\partial_{tt}\tilde{v}|+|\nabla^2_\Gamma\tilde{v}|+|\partial_t\nabla_\Gamma\tilde{v}|)\qquad\forall\,(t,\theta)\in(t_0,\infty)\times\Gamma,
\end{equation}
for some $c,t_0>0$ large enough and some $\eta>0$ small enough. %function $c:(t_0,\infty)\times\Gamma\to \R$ such that $c(t,\theta)\to 0$ as $t\to\infty$, uniformly in $\theta\in\Gamma$.
\vskip 3pt

We then follow the lines of the proof of Proposition 8.3.1 in \cite{P} with only slight changes to find that $v\in\mathcal{W}^{2,2}_{-\delta}(\Sigma,|y|^{-N})$. Since $v$ is arbitrary, ${\rm Dom}(A^*_{\delta})\subset \mathcal{W}^{2,2,}_{-\delta}(\Sigma, |y|^{-N})$ and this completes the proof of the lemma.
\end{proof}

Recall that $\Gamma\subset \mathbb{S}^{N}$ is a smooth minimal submanifold of $\mathbb{S}^N$. Let $\{\lambda_j\}_j$ be the sequence of eigenvalues of the operator $-J_{\Gamma}$ in $L^2(\Gamma)$, where $J_{\Gamma}:=\Delta_{\Gamma}+ |A_{\Gamma}|^2$ and $\Delta_{\Gamma}$ and $|A_{\Gamma}|$ correspond to the Laplace-Beltrami operator and the norm of the second fundamental form in $\Gamma$, respectively.\\

 We recall that the eigenvalues $\lambda_j$ of the Jacobi operator $-J_\Gamma:=-(\Delta_\Gamma+|A_\Gamma|^2)$ of $\Gamma$ satisfy 
\begin{equation}
\label{ineq:eigval_Gamma}
\lambda_0<\lambda_1\le\dots\le \lambda_j\le \dots,\qquad\lambda_j\to\infty\qquad j\to\infty.  
\end{equation}

We also recall that the numbers ${-\frac{N-2}{2}\pm\rm Re}(\Lambda_j)$ are the \textit{indicial roots} of $C$.
\begin{remark}\label{rmk:ellipt_estimates}
Following the lines of the proof of the Lemma \ref{lemma:adjoint_oper} above, and the arguments of the proof of the Propositions 8.3.1 and 8.4.1 in \cite{P}, with only minor changes, we obtain that given any $\delta \in \R\setminus \{\pm {\rm Re}(\Lambda_j)\}_{j\geq 0}$:
\vskip 3pt
 there exists $R_0>0$ such that for any $R>R_0$, there exists $c=c(\delta,R)>0$ such that for any $u\in \mathcal{W}^{2,2}_{\delta}(\Sigma,|y|^{-N})$,
\begin{equation}\label{ineq:ellipt_estimates_compact}
\|u\|_{\mathcal{W}^{2,2}_{\delta}(\Sigma,|y|^{-N})}\leq c \Big(\|{A}_{\delta}u\|_{L^{2}_{\delta}(\Sigma,|y|^{-N})} + \| u\|_{L^{2}(K_R)}\Big).
\end{equation}
\end{remark}

For $\delta \in \R$, we write $L^2_{\delta}(\Sigma,|y|^{-N})=K_{\delta}\oplus K_{\delta}^{\perp}$, where
\begin{equation}\label{def:Kernel_delta}
K_\delta\,:=\,{\rm Ker}(A_\delta)\,=\,\{u\in \mathcal{W}^{2,2}_\delta(\Sigma,|y|^{-N}):\,\mathcal{L}u=0\} 
\end{equation}
and $K_{\delta}^{\perp}$ denotes the orthogonal complement of $K_{\delta}$ with respect to the inner product in $L^2_{\delta}(\Sigma,|y|^{-N})$, i.e.,
$$
K_{\delta}^{\perp}=\big\{u\in L^2_{\delta}(\Sigma,|y|^{-N})\,:\,  \hbox{for all }v\in K_{\delta},\,\,\int_{\Sigma}\Gamma_{-2\delta} uv|y|^{-N}d\sigma =0\big\}.
$$

\begin{lemma}\label{lemma:invert_Adelta}
Let $\delta \in \R\setminus \{\pm {\rm Re}(\Lambda_j)\}_{j\geq 0}$ and $R_0>0$ be as in the Remark \ref{rmk:ellipt_estimates}. For any $R>R_0$, there exists $C=C(\delta,R)>0$ such that for any $u\in K_{\delta}^{\perp}\cap \mathcal{W}^{2,2}_{\delta}(\Sigma,|y|^{-N})$,
\begin{equation}\label{ineq:estima_inject}\|u\|_{L^2(K_{R})}\leq C\|A_{\delta} u\|_{L^2_{\delta}(\Sigma,|y|^{-N})}.
\end{equation}
\end{lemma}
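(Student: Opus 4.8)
The statement is a coercivity (injectivity) estimate on the orthogonal complement of the kernel, and the natural route is a compactness–contradiction argument combined with the elliptic estimate \eqref{ineq:ellipt_estimates_compact} from Remark \ref{rmk:ellipt_estimates}. Suppose \eqref{ineq:estima_inject} fails. Then for some fixed $R>R_0$ there is a sequence $u_k\in K_\delta^\perp\cap\mathcal W^{2,2}_\delta(\Sigma,|y|^{-N})$ with $\|u_k\|_{L^2(K_R)}=1$ and $\|A_\delta u_k\|_{L^2_\delta(\Sigma,|y|^{-N})}\to 0$. Plugging this normalization into \eqref{ineq:ellipt_estimates_compact} gives a uniform bound $\|u_k\|_{\mathcal W^{2,2}_\delta(\Sigma,|y|^{-N})}\le c(1+o(1))$, so the sequence is bounded in the weighted Sobolev space.

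The next step is to extract a limit. By reflexivity of $\mathcal W^{2,2}_\delta(\Sigma,|y|^{-N})$ we may pass to a subsequence with $u_k\rightharpoonup u$ weakly there; by Rellich on the compact piece $K_R$ (where the metric is equivalent to that of the cylinder $(0,R)\times\Gamma$, as recalled in the proof of Lemma \ref{lemma:adjoint_oper}) and on any fixed compact exhaustion of $\Sigma$, the convergence $u_k\to u$ is strong in $L^2_{loc}(\Sigma)$, and in particular strong in $L^2(K_R)$, so that $\|u\|_{L^2(K_R)}=1$; hence $u\neq 0$. Since $A_\delta$ is continuous from $\mathcal W^{2,2}_\delta(\Sigma,|y|^{-N})$ to $L^2_\delta(\Sigma,|y|^{-N})$ and the weak limit of $A_\delta u_k$ must be $A_\delta u$, while $\|A_\delta u_k\|_{L^2_\delta(\Sigma,|y|^{-N})}\to 0$, we conclude $A_\delta u=0$, i.e. $u\in K_\delta$. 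On the other hand $K_\delta^\perp$ is closed and convex, hence weakly closed, so $u\in K_\delta^\perp$. Therefore $u\in K_\delta\cap K_\delta^\perp=\{0\}$, contradicting $u\neq 0$.

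The delicate point — and the step I expect to be the main obstacle — is justifying that the $L^2(K_R)$-convergence of $u_k$ is strong enough to keep the normalization $\|u\|_{L^2(K_R)}=1$ in the limit, and dually that no mass escapes to infinity in a way that would break $A_\delta u_k\rightharpoonup A_\delta u$. The first is handled cleanly because $K_R$ is compact, so Rellich–Kondrachov applies with no weight subtleties. For the second, one uses that $A_\delta$ is a fixed continuous linear operator between the Hilbert spaces in play, so it is weak–weak continuous; combined with $\|A_\delta u_k\|_{L^2_\delta}\to 0$ this forces the weak limit $A_\delta u$ to vanish. One should also record that the constant $C$ produced is uniform over $u\in K_\delta^\perp\cap\mathcal W^{2,2}_\delta$ precisely because the contradiction argument is run with $R$ (hence $R_0$, hence the constant $c$ in \eqref{ineq:ellipt_estimates_compact}) fixed in advance; this is what makes $C=C(\delta,R)$ legitimate.

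A minor technical remark: the hypothesis $\delta\notin\{\pm{\rm Re}(\Lambda_j)\}_{j\ge0}$ is used only through Remark \ref{rmk:ellipt_estimates} to obtain \eqref{ineq:ellipt_estimates_compact}; no further use of the indicial roots is needed here. The argument is exactly the Hilbert-space analogue of the standard Fredholm-alternative scheme, and it is the companion to the elliptic estimate: together they will give that $A_\delta$ restricted to $K_\delta^\perp$ is injective with closed range.
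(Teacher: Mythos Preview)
Your proposal is correct and follows essentially the same route as the paper: a contradiction argument with a normalized sequence, the elliptic estimate \eqref{ineq:ellipt_estimates_compact} to get boundedness in $\mathcal W^{2,2}_\delta$, weak compactness plus Rellich--Kondrachov on the compact piece $K_R$, and the observation that the weak limit lies in $K_\delta\cap K_\delta^\perp=\{0\}$. The only cosmetic difference is that the paper deduces $\mathcal{L}u=0$ from local weak $W^{2,2}$-convergence rather than invoking weak--weak continuity of $A_\delta$, but the two justifications are equivalent here.
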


\begin{proof} Assume that $R>R_0$. We prove \eqref{ineq:estima_inject} arguing by contradiction. Let $\{u_k\}_k\in K_{\delta}^{\perp}\cap W^{2,2}_{\delta}(\Sigma,|y|^{-N})$ be a sequence such that such that 
$$
\|A_{\delta}u_k\|_{L^2_{\delta}(\Sigma,|y|^{-N})} \to 0 \qquad  \hbox{and} \qquad  \hbox{for all } \quad k\in \mathbb{N},\,\,\|u_k\|_{L^2(K_R)}=1.
$$

We observe that \eqref{ineq:ellipt_estimates_compact} is true. Consequently, $\{u_k\}_k$ is bounded in $W^{2,2}_{\delta}(\Sigma,|y|^{-N})$. Passing to a subsequence, if necessary, we may assume that $u_k\rightharpoonup u$ weakly in  $W^{2,2}_{\delta}(\Sigma,|y|^{-N})$. The weak convergence of $\{u_k\}_k$ and the fact that  $\{u_k\}_k\subset K_{\delta}^{\perp}$ imply that $u\in K_{\delta}^{\perp}$.

\vskip 3pt 

We may also assume that $u_k\rightharpoonup u$ weakly in $W^{2,2}_{loc}(\Sigma)$, which in turn yields that $\mathcal{L}u=0$ a.e. in $\Sigma$. We conclude that $u\in W^{2,2}_{\delta}(\Sigma,|y|^{-N})$ is such that $A_{\delta}u=0$, i.e., $u\in K_{\delta}$. Therefore, $u\in K_{\delta}\cap K_{\delta}^{\perp}=\{0\}$.

\vskip 3pt
However, the Rellich-Kondrachov Theorem yields that $u_k \to u$ strongly in $L^2(K_R)$ and hence $\|u\|_{L^2(K_R)}=1$, which is a contradiction. This proves the lemma.
\end{proof}

\begin{proposition}
\label{propo-Fredholm}
Let $\delta\in\R\setminus\{\pm{\rm Re}(\Lambda_j)\}_{j\ge 0}$. Then, $A_\delta:W^{2,2}_{\delta}(\Sigma,|y|^{-N})\to L^2_{\delta}(\Sigma,|y|^{-N})$ is a Fredholm operator, i.e., 
\begin{itemize}
    \item[i.] ${\rm Im}(A_{\delta}) $ is closed and
    \item[ii.] ${\rm dim}(K_{\delta})$ and ${\rm codim}({\rm Im}(A_{\delta}))$ are finite.
\end{itemize}

{Moreover, the same conclusion holds for $A_{-\delta}:W^{2,2}_{-\delta}(\Sigma,|y|^{-N})\to L^2_{-\delta}(\Sigma,|y|^{-N})$ and ${\rm codim}({\rm Im}(A_{\delta}))={\rm dim}(K_{-\delta})$.} 
 \end{proposition}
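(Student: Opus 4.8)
The plan is to establish the three Fredholm assertions by combining the a priori estimates already assembled (the elliptic estimate \eqref{ineq:ellipt_estimates_compact} from Remark \ref{rmk:ellipt_estimates}, the injectivity-type estimate \eqref{ineq:estima_inject} from Lemma \ref{lemma:invert_Adelta}) with the self-adjoint duality $A_\delta^\star = A_{-\delta}$ from Lemma \ref{lemma:adjoint_oper}. First I would prove that $K_\delta = {\rm Ker}(A_\delta)$ is finite-dimensional: on $K_\delta$ the operator $A_\delta$ vanishes, so \eqref{ineq:ellipt_estimates_compact} gives $\|u\|_{\mathcal{W}^{2,2}_\delta} \le c\|u\|_{L^2(K_R)}$ for all $u\in K_\delta$, which says the identity map from $(K_\delta, \|\cdot\|_{\mathcal{W}^{2,2}_\delta})$ into $L^2(K_R)$ is bounded below; since the inclusion $\mathcal{W}^{2,2}_\delta(\Sigma,|y|^{-N}) \hookrightarrow L^2(K_R)$ is compact by Rellich--Kondrachov (as $K_R$ is compact), the unit ball of $K_\delta$ is precompact in its own norm, hence $K_\delta$ is finite-dimensional.

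Next I would show ${\rm Im}(A_\delta)$ is closed. Write $L^2_\delta(\Sigma,|y|^{-N}) = K_\delta \oplus K_\delta^\perp$ and restrict $A_\delta$ to $K_\delta^\perp \cap \mathcal{W}^{2,2}_\delta$; its image is still all of ${\rm Im}(A_\delta)$. On this subspace, combining \eqref{ineq:ellipt_estimates_compact} with \eqref{ineq:estima_inject} yields the clean a priori bound
\begin{equation}\notag
\|u\|_{\mathcal{W}^{2,2}_\delta(\Sigma,|y|^{-N})} \le C\,\|A_\delta u\|_{L^2_\delta(\Sigma,|y|^{-N})}, \qquad u\in K_\delta^\perp \cap \mathcal{W}^{2,2}_\delta(\Sigma,|y|^{-N}),
\end{equation}
with the $\|u\|_{L^2(K_R)}$ term on the right of \eqref{ineq:ellipt_estimates_compact} absorbed by \eqref{ineq:estima_inject}. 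This estimate is exactly the statement that $A_\delta\big|_{K_\delta^\perp}$ is bounded below; a standard Banach-space lemma then gives that its range, and hence ${\rm Im}(A_\delta)$, is closed.

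Then I would identify the cokernel with $K_{-\delta}$. Because ${\rm Im}(A_\delta)$ is closed, the standard functional-analytic duality for closed-range densely-defined operators gives ${\rm Im}(A_\delta)^\perp = {\rm Ker}(A_\delta^\star)$ inside the dual pairing of $L^2_\delta(\Sigma,|y|^{-N})$ with $L^2_{-\delta}(\Sigma,|y|^{-N})$ described in Lemma \ref{lemma-dual}, and hence ${\rm codim}({\rm Im}(A_\delta)) = {\rm dim}\,{\rm Ker}(A_\delta^\star)$. By Lemma \ref{lemma:adjoint_oper}, $A_\delta^\star = A_{-\delta}$, so ${\rm Ker}(A_\delta^\star) = K_{-\delta}$. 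Since $-\delta \in \R\setminus\{\pm{\rm Re}(\Lambda_j)\}_{j\ge 0}$ as well, the first part of the argument applies verbatim with $\delta$ replaced by $-\delta$ to show $\dim K_{-\delta} < \infty$; this simultaneously proves the "same conclusion holds for $A_{-\delta}$" clause (closedness of ${\rm Im}(A_{-\delta})$ follows by the same estimates applied to $-\delta$, and its cokernel is $K_\delta$, which is finite-dimensional by the first step). Combining, ${\rm codim}({\rm Im}(A_\delta)) = \dim K_{-\delta} < \infty$, completing (i), (ii) and the final identity.

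The main obstacle is the closed-range step: one must be careful that \eqref{ineq:estima_inject} genuinely lets one absorb the compact-set term, and that the resulting lower bound on $A_\delta\big|_{K_\delta^\perp \cap \mathcal{W}^{2,2}_\delta}$ is stated with respect to the correct norms so that the abstract closed-range criterion applies — in particular that $K_\delta^\perp \cap \mathcal{W}^{2,2}_\delta$ is complete (it is, being a closed subspace of $\mathcal{W}^{2,2}_\delta$, since $K_\delta$ is finite-dimensional hence closed). The identification of the cokernel with ${\rm Ker}(A_\delta^\star)$ also uses crucially that the range is closed, so the logical order — finite-dimensional kernel, then closed range, then cokernel via adjoint — must be respected.
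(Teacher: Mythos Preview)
Your proposal is correct and follows essentially the same approach as the paper: finite-dimensionality of $K_\delta$ via \eqref{ineq:ellipt_estimates_compact} plus Rellich--Kondrachov on $K_R$, closedness of the range via the combined estimate \eqref{ineq:ellipt_estimates_compact}$+$\eqref{ineq:estima_inject} on $K_\delta^\perp\cap\mathcal{W}^{2,2}_\delta$, and the cokernel identification through $A_\delta^\star=A_{-\delta}$ from Lemma~\ref{lemma:adjoint_oper}. The only cosmetic difference is that the paper carries out the closed-range step via a Cauchy-sequence argument and makes the cokernel isomorphism explicit through the isometry $g\mapsto\Gamma_{-2\delta}g$, whereas you invoke the abstract bounded-below and closed-range theorems; the content is the same.
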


\begin{proof}
 First, the proof that ${\rm dim}(K_{\delta})$ is finite is done arguing by contradiction following the arguments of the proof of Proposition 9.1.1 in \cite{P} making use of the estimate \eqref{ineq:ellipt_estimates_compact}.\\

Next, we prove that ${\rm Im}(A_{\delta})$ is closed in $L^2_{\delta}(\Sigma,|y|^{-N})$. We argue as in the proof of Theorem 9.2.1 in \cite{P}. Let $\{g_k\}_k\in L^2_{\delta}(\Sigma,|y|^{-N})$ and $\{u_k\}_k\in W^{2,2}_{\delta}(\Sigma,|y|^{-N})$ be such that for any $k\in \mathbb{N}$, $A_{\delta}u_k=g_k$ and $g_k\to g$ strongly in $L^2_{\delta}(\Sigma,|y|^{-N}) $. By projecting each $u_k$ onto $K_{\delta}^{\perp}$, we may assume that $\{u_k\}_k\in K_{\delta}^{\perp}\cap W^{2,2}_{\delta}(\Sigma,|y|^{-N})$ so that for any $k\in \mathbb{N}$ and for any $v\in K_{\delta}$,
\begin{equation}
\label{ort-Sigma-delta}
\langle u_k,v\rangle_\delta=0.
\end{equation}

The estimate \eqref{ineq:ellipt_estimates_compact} in Remark \ref{rmk:ellipt_estimates} and the estimate \eqref{ineq:estima_inject} in Lemma \ref{lemma:invert_Adelta} yield that for some constant $C>0$ independent of $k$, and for any $k_1,k_2\in \mathbb{N}$,
$$
\|u_{k_1}-u_{k_2}\|_{W^{2,2}_{\delta}(\Sigma,|y|^{-N})}\leq C\|g_{k_1}-g_{k_2}\|_{L^{2}_{\delta}(\Sigma,|y|^{-N})}.
$$

Since $\{g_k\}_k$ is a Cauchy sequence in $L^2_{\delta}(\Sigma,|y|^{-N})$ and $W^{2,2}_{\delta}(\Sigma,|y|^{-N})$ is Banach, there exists $u\in W^{2,2}_{\delta}(\Sigma,|y|^{-N})$
 such that $u_k \to u$ strongly in $W^{2,2}_{\delta}(\Sigma,|y|^{-N})$. We thus conclude that $A_{\delta}u=g$ and from \eqref{ort-Sigma-delta} that $u\in K_{\delta}^{\perp}$. This proves the claim. 

\vskip 3pt
At this point, we remark that $A_{-\delta}:W^{2,2}_{-\delta}(\Sigma,|y|^{-N})\to L^2_{-\delta}(\Sigma,|y|^{-N})$ is such that ${\rm dim}(K_{-\delta})$ is finite and ${\rm Im}(A_{-\delta})$ is closed in $L^2_{-\delta}(\Sigma,|y|^{-N})$.  

\vskip 3pt
Now we prove that ${\rm codim}({\rm Im}(A_{\delta}))={\rm dim}(K_{-\delta})$. We proceed as follows. From Lemma \ref{lemma:adjoint_oper} that for any $u\in W^{2,2}_{\delta}(\Sigma,|y|^{-N})$ and for any $v\in W^{2,2}_{-\delta}(\Sigma,|y|^{-N})$,
$$
\langle A_{\delta}u,v\rangle=\langle u,A_{-\delta} v\rangle.
$$

Observe also that 
$$
L^{2}_{\delta}(\Sigma,|y|^{-N})={\rm Im}(A_{\delta})\oplus{\rm Im}(A_{\delta})^{\perp} 
$$
and 
$$
L^{2}_{-\delta}(\Sigma,|y|^{-N})=K_{\delta}\oplus K_{-\delta}^{\perp}
$$
where the orthogonal complements are considered with respect to the inner products $\langle\cdot ,\cdot\rangle_{\delta}$ and $\langle \cdot,\cdot\rangle_{-\delta}$, respectively. Using the notations of the proof of Lemma \ref{lemma-dual}, given $g,u\in W^{2,2}_{\delta}(\Sigma,|y|^{-N})$, and setting {$\tilde{\rm g}:= \Gamma_{-2\delta}\,g\in W^{2,2}_{-\delta}(\Sigma,|y|^{-N})$} and {$\tilde{u}:=\Gamma_{-2\delta}\,u\in W^{2,2}_{-\delta}(\Sigma,|y|^{-N})$}, we have that
$$
\langle g ,A_{\delta}u\rangle_{\delta}=\langle \tilde{g} ,A_{\delta}u\rangle=\langle A_{-\delta}\tilde{g},u\rangle = \langle A_{-\delta}\tilde{g},\tilde{u}\rangle_{-\delta}. 
$$

From the previous identities, using a density argument, the linear isometry $g \in L^2_{\delta}(\Sigma,|y|^{-N})\mapsto \tilde{g}:=\Gamma_{-2\delta}g\in L^2_{-\delta}(\Sigma,|y|^{-N})$ maps ${\rm Im}(A_{\delta})^{\perp}$ bijectively onto $K_{-\delta}$. This proves the claim and concludes the proof of the proposition.\end{proof}

\begin{remark}\label{rmk:invert_A_delta}
Observe that $K_{\delta}^{\perp}\cap \mathcal{W}^{2,2}_{\delta}(\Sigma,|y|^{-N})$ is closed in $\mathcal{W}^{2,2}_{\delta}(\Sigma,|y|^{-N})$ and ${\rm Im}(A_{\delta})$ is closed in $L^{2}_{\delta}(\Sigma,|y|^{-N})$. Also, the estimates in \eqref{ineq:ellipt_estimates_compact} and \eqref{ineq:estima_inject} imply that $A_{\delta}:K_{\delta}^{\perp} \cap \mathcal{W}^{2,2}_{\delta}(\Sigma,|y|^{-N})\to {\rm Im}(A_{\delta})$ is bijective with continuous inverse. 
 
 \vskip 3pt
 The open mapping Theorem implies that $A_{\delta}:K_{\delta}^{\perp}\cap W^{2,2}_{\delta}(\Sigma,|y|^{-N}) \to {\rm Im}(A_{\delta})$ is a linear topological isomorphism.
\end{remark}

Let $\delta\notin\{\pm{\rm Re}(\Lambda_j)\}_{j\ge 0}$. Set $d(\delta):=\dim(K_\delta)$, which is finite in view of Proposition \ref{propo-Fredholm}. The set $\{v_{\delta,j}\}_{1\le j\le d(\delta)}$ will denote an orthonormal basis (with respect to the scalar product in \eqref{def-scalar-prod}) of $K_{\delta}$.

In the following proposition, the constant $c$ denotes a universal constant.

\begin{proposition}
\label{prop-right-inverse}
Let $\delta\in \R\backslash\{\pm{\rm Re}(\Lambda_j)\}_{j\ge 0}$. 
\begin{itemize}
\item[i.] Assume that $K_{-\delta}=\{0\}$. Then, for any $g\in L^2_\delta(\Sigma,|y|^{-N})$, there exists a unique $u\in \mathcal{W}^{2,2}_\delta(\Sigma,|y|^{-N})\cap K_{\delta}^{\perp}$ such that $\mathcal{L}u=g$ and fulfilling
\begin{equation}
\label{est-u-injective}
\|u\|_{\mathcal{W}^{2,2}_\delta(\Sigma,|y|^{-N})}\le c\|g\|_{L^{2}_\delta(\Sigma,|y|^{-N})}.
\end{equation}

\item[ii.] Assume that $K_{-\delta}\ne\{0\}$. Then, for any $g\in L^2_\delta(\Sigma,|y|^{-N})$, there exists a unique $u\in \mathcal{W}^{2,2}_\delta(\Sigma,|y|^{-N})\cap K_{\delta}^{\perp}$ and uniquely determined scalars $\mu_1,\dots,\mu_{d(-\delta)}\in \R$ such that 
\begin{equation}
\label{right-inv-L}
\mathcal{L}u=g-\sum_{j=1}^{d(-\delta)} \mu_jv_{-\delta,j}\Gamma_\delta^2\quad\hbox{in} \quad \Sigma
\end{equation}
and for every $1\le j\le d(-\delta)$,
\begin{equation}\label{eqn:scalar_prprojec}
\mu_j:=\int_{\Sigma} \,gv_{-\delta,j}  \,|y|^{-N}d\sigma.
\end{equation}

Moreover, 
\begin{equation}
\label{est-u-non-injective}
\|u\|_{\mathcal{W}^{2,2}_\delta(\Sigma,|y|^{-N})}+\sum_{j=1}^{d(-\delta)}|\mu_j|\le c\|g\|_{L^{2}_\delta(\Sigma,|y|^{-N})}.
\end{equation}
\end{itemize}
\end{proposition}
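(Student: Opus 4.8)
The plan is to deduce Proposition \ref{prop-right-inverse} directly from the Fredholm alternative established in Proposition \ref{propo-Fredholm}, together with the topological isomorphism noted in Remark \ref{rmk:invert_A_delta}. Since $\delta\notin\{\pm{\rm Re}(\Lambda_j)\}_{j\ge0}$, Proposition \ref{propo-Fredholm} tells us that $A_\delta$ has closed image and that ${\rm codim}({\rm Im}(A_\delta))={\rm dim}(K_{-\delta})=d(-\delta)$; moreover the isometry $g\mapsto\tilde g:=\Gamma_{-2\delta}g$ from $L^2_{\delta}(\Sigma,|y|^{-N})$ onto $L^2_{-\delta}(\Sigma,|y|^{-N})$ carries ${\rm Im}(A_\delta)^\perp$ bijectively onto $K_{-\delta}$. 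Hence a function $g\in L^2_\delta(\Sigma,|y|^{-N})$ lies in ${\rm Im}(A_\delta)$ if and only if $\langle g,v\rangle=\int_\Sigma g v\,|y|^{-N}d\sigma=0$ for every $v\in K_{-\delta}$, using the duality pairing \eqref{def-duality-prod} between $L^2_\delta$ and $L^2_{-\delta}$.

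For part i., when $K_{-\delta}=\{0\}$, the orthogonality conditions are vacuous, so ${\rm Im}(A_\delta)=L^2_\delta(\Sigma,|y|^{-N})$, i.e. $A_\delta$ is surjective. By Remark \ref{rmk:invert_A_delta} the restriction $A_\delta\colon K_\delta^\perp\cap\mathcal{W}^{2,2}_\delta(\Sigma,|y|^{-N})\to{\rm Im}(A_\delta)=L^2_\delta(\Sigma,|y|^{-N})$ is a linear topological isomorphism; applying its continuous inverse to $g$ produces the unique $u\in K_\delta^\perp\cap\mathcal{W}^{2,2}_\delta(\Sigma,|y|^{-N})$ with $\mathcal{L}u=g$, and the continuity of the inverse gives the estimate \eqref{est-u-injective}. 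Uniqueness within $K_\delta^\perp$ is immediate since two such solutions differ by an element of $K_\delta\cap K_\delta^\perp=\{0\}$.

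For part ii., let $\{v_{-\delta,j}\}_{1\le j\le d(-\delta)}$ be the orthonormal basis of $K_{-\delta}$ (with respect to $\langle\cdot,\cdot\rangle_{-\delta}$) and set $\mu_j:=\int_\Sigma g\,v_{-\delta,j}\,|y|^{-N}d\sigma$, i.e. $\mu_j=\langle g,v_{-\delta,j}\rangle$ in the pairing \eqref{def-duality-prod}. I claim $g':=g-\sum_{j=1}^{d(-\delta)}\mu_j v_{-\delta,j}\Gamma_\delta^2$ lies in ${\rm Im}(A_\delta)$: indeed for each $k$, using $\langle v_{-\delta,j}\Gamma_\delta^2,v_{-\delta,k}\rangle=\int_\Sigma \Gamma_{-2\delta}v_{-\delta,j}v_{-\delta,k}|y|^{-N}d\sigma=\langle v_{-\delta,j},v_{-\delta,k}\rangle_{-\delta}=\delta_{jk}$, one computes $\langle g',v_{-\delta,k}\rangle=\mu_k-\mu_k=0$, so $g'$ annihilates $K_{-\delta}$ and hence lies in the (closed) image of $A_\delta$. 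Applying the isomorphism of Remark \ref{rmk:invert_A_delta} to $g'$ yields a unique $u\in K_\delta^\perp\cap\mathcal{W}^{2,2}_\delta(\Sigma,|y|^{-N})$ solving \eqref{right-inv-L}, with $\|u\|_{\mathcal{W}^{2,2}_\delta}\le c\|g'\|_{L^2_\delta}\le c\|g\|_{L^2_\delta}$; the last inequality also bounds $\sum_j|\mu_j|\le c\|g\|_{L^2_\delta}$ since each $\mu_j$ is a bounded linear functional of $g$ (Cauchy--Schwarz in $L^2_\delta$), giving \eqref{est-u-non-injective}. Uniqueness of the $\mu_j$ follows because the $v_{-\delta,j}\Gamma_\delta^2$ are linearly independent modulo ${\rm Im}(A_\delta)$ (their images under the isometry $g\mapsto\tilde g$ are, up to the orthonormal $v_{-\delta,j}$, a basis of $K_{-\delta}\cong{\rm Im}(A_\delta)^\perp$), so any two admissible choices of scalars must coincide; uniqueness of $u$ in $K_\delta^\perp$ is as in part i.

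The only genuinely delicate point is bookkeeping the two inner products $\langle\cdot,\cdot\rangle$ on $L^2_0$ (equivalently the duality pairing between $L^2_\delta$ and $L^2_{-\delta}$, formula \eqref{def-duality-prod}) versus $\langle\cdot,\cdot\rangle_{\pm\delta}$ on the weighted spaces, and keeping the weight $\Gamma_\delta^2$ in \eqref{right-inv-L} consistent with the isometry $g\mapsto\Gamma_{-2\delta}g$ and the normalization of the $v_{-\delta,j}$; all the analytic content (closed range, finite-dimensional cokernel, the identification ${\rm Im}(A_\delta)^\perp\cong K_{-\delta}$, the bounded inverse on $K_\delta^\perp$) has already been supplied by Proposition \ref{propo-Fredholm} and Remark \ref{rmk:invert_A_delta}, so this proof is essentially a careful transcription of the abstract Fredholm picture into the explicit orthogonality/estimate statements.
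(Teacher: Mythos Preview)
Your proof is correct and follows essentially the same route as the paper's. The only cosmetic difference is in part~ii: the paper first takes the orthogonal decomposition $g={\rm g}+{\rm g}^\perp$ in $L^2_\delta(\Sigma,|y|^{-N})$ and then identifies ${\rm g}^\perp$ with an element of $K_{-\delta}$ via the isometry $h\mapsto\Gamma_{-2\delta}h$, whereas you define the $\mu_j$ directly by the duality pairing and verify that $g'=g-\sum_j\mu_j v_{-\delta,j}\Gamma_\delta^2$ annihilates $K_{-\delta}$; these are two presentations of the same computation, and your version has the minor advantage of making the formula \eqref{eqn:scalar_prprojec} for $\mu_j$ manifest from the start rather than recovering it a posteriori.
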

\begin{proof} We first prove {\it i.}. From Proposition \ref{propo-Fredholm}, ${\rm Im}(A_{\delta})=L^2_{\delta}(\Sigma,|y|^{-N})$. In view of Remark \ref{rmk:invert_A_delta}, Lemma \ref{lemma:invert_Adelta} and  Remark \ref{rmk:ellipt_estimates} the conclusion follows.

\vskip 3pt
Next, we prove {\it ii.}. We proceed as follows. Let $g\in L^2_{\delta}(\Sigma,|y|^{-N})$. Following the conventions of the proof of Proposition \ref{propo-Fredholm}, write $g={\rm g}+{\rm g}^{\perp}$, where ${\rm g}\in {\rm Im}(A_{\delta})$ and ${\rm g}^{\perp}\in {\rm Im}(A_{\delta})^{\perp}$. Let $\tilde{\rm g}:=\Gamma_{-2\delta}{\rm g}^{\perp}$ and observe that $\tilde{\rm g}\in K_{-\delta}$. Thus, for scalars $\mu_1,\ldots, \mu_{d(-\delta)}$, uniquely determined by \eqref{eqn:scalar_prprojec}, and using that ${\rm g}^{\perp}=
\Gamma_{2\delta}\tilde{\rm g}$,
$$
{\rm g}^{\perp}=\mu_{1}\Gamma_{2\delta}v_{-\delta,1}+\cdots+\mu_{d(-\delta)}\Gamma_{2\delta}v_{-\delta,d(-\delta)}.
$$

From this
\eqref{eqn:scalar_prprojec} follows that for any $1\leq j\leq d(-\delta)$,
\begin{equation}\label{ineq:est_scal_project}
|\mu_j|\leq \|v_{-\delta,j}\|_{L^2_{-\delta}(\Sigma,|y|^{-N})}\|g\|_{L^2_{\delta}(\Sigma,|y|^{-N})}=\|g\|_{L^2_{\delta}(\Sigma,|y|^{-N})}.
\end{equation}

Now, since ${\rm g}\in {\rm Im}(A_{\delta})$, we argue in the same manner as in the proof of part {\it i.} to find a unique $u\in \mathcal{W}^{2,2}_{\delta}(\Sigma,|y|^{-N})\cap K_{\delta}^{\perp}$ such that $A_{\delta}u={\rm g}$, i.e., $u$ and $g$ satisfy \eqref{right-inv-L}. Since ${\rm g}=g-{\rm g}^{\perp}$,
$$
\|u\|_{\mathcal{W}^{2,2}_{\delta}(\Sigma,|y|^{-N})} \leq c\|{\rm g}\|_{L^2_{\delta}(\Sigma,|y|^{-N})} \leq \|g\|_{L^2_{\delta}(\Sigma,|y|^{-N})} + \sum_{j=1}^{d(-\delta)}|\mu_j|,
$$
and using \eqref{ineq:est_scal_project}, the estimate \eqref{est-u-non-injective} follows. This concludes the proof. 
\end{proof}

\section{S-dilation nondegeneracy}

\subsection{Asymptotic behaviour of the geometric Jacobi fields}

In this subsection we discuss the asymptotic behaviour of geometric Jacobi fields.

\vskip 3pt
Recall that $\{e_1,\ldots, e_{N+1}\}$ is the standard basis in $\R^{N+1}$ and that the functions
\begin{equation}\label{def:jac_field_transl}
\zeta_j(y):=e_j\cdot \nu_\Sigma(y) \quad \hbox{for } y\in \Sigma, \,1\leq j\leq N+1,
\end{equation}
span the Jacobi fields of $\Sigma$ arising from invariances under translations.

\vskip 3pt
Recall also that
\begin{equation}\label{def:jac_field_dilat}
\zeta_0(y):=y\cdotp\nu_\Sigma(y) \quad \hbox{for }y\in \Sigma,
\end{equation}
 $\zeta_0$ generates the Jacobi fields of $\Sigma$ arising from the invariance under dilations.

\begin{lemma}
\label{lemma-dec-Jacobi}
Let $S=\{0\}$ and assume (H1)-(H4). If $\zeta \in {\rm span}\{\zeta_1,\ldots,\zeta_{N+1}\}\setminus\{0\}$, then
\begin{equation}\label{eqn:JF_transl_behav}
  \limsup_{|y|\to\infty}|\zeta(y)|>0.  
\end{equation}

Moreover, $\lim_{|y|\to\infty}\zeta_0(y)=0$ and the nontrivial Jacobi fields $\zeta$ of $\Sigma$ coming from rotations have linear growth at infinity.
\end{lemma}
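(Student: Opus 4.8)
The plan is to analyze the three families in the asymptotic region $\Sigma_R$ by means of the normal--graph description \eqref{Sigma-normal-graph}. Put $r=e^t$, $\tilde w(t,\theta):=w(e^t,\theta)$, so that $X(t,\theta):=e^t\theta+\tilde w(t,\theta)\nu_C(\theta)$, $\theta\in\Gamma$, $t>\log R$, parametrizes $\Sigma_R$. Since $\Sigma$ is minimal, $\tilde w$ solves the minimal surface equation, which in the coordinates $(t,\theta)$ --- where the induced metric is comparable to the product metric on $\R\times\Gamma$, cf. \eqref{eqn:asympt_metric_Sigma_EmdenFowler} --- is uniformly elliptic with coefficients that stabilize as $t\to\infty$. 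Combining $\|\tilde w(t,\cdot)\|_{C^0(\Gamma)}\to 0$ from (H3) with interior Schauder estimates on the slabs $[t_0-1,t_0+1]\times\Gamma$, I would upgrade this to decay of the full $C^2$ norm of $\tilde w$ on such slabs as $t_0\to\infty$; in particular $\partial_t\tilde w\to 0$ uniformly. Two consequences are used repeatedly below: first $|X(t,\theta)|=e^t(1+o(1))$ uniformly in $\theta$; second, since $e^{-t}\partial_tX=\theta+e^{-t}\partial_t\tilde w\,\nu_C\to\theta$ and $e^{-t}\partial_{\theta^i}X\to\partial_{\theta^i}\theta$, the tangent planes $T_{X(t,\theta)}\Sigma$ converge to $T_{e^t\theta}C$, whence the Gauss maps satisfy $\nu_\Sigma(X(t,\theta))\to\nu_C(\theta)$ uniformly in $\theta$ as $t\to\infty$.

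For the translation fields, write $\zeta={\bf a}\cdot\nu_\Sigma$ with ${\bf a}\in\R^{N+1}\setminus\{0\}$. The key step is that ${\bf a}\cdot\nu_C\not\equiv 0$ on $\Gamma$: if it vanished identically, the constant field ${\bf a}$ would be tangent to $C\setminus\{0\}$ at every point, so $C$ would be invariant under all translations by $\R{\bf a}$; setting $C':=C\cap{\bf a}^\perp$ one checks $C=C'\times\R{\bf a}$, and then the orthogonal reflection across ${\bf a}^\perp$ is a non--identity element of $O(N+1)$ mapping $C$ onto itself, contradicting (H4) (recall $S=\{0\}$). Choosing $\theta^*\in\Gamma$ with $c^*:={\bf a}\cdot\nu_C(\theta^*)\neq 0$ and evaluating $\zeta$ along $y_k:=X(t_k,\theta^*)$ with $t_k\to\infty$, the normal convergence gives $\zeta(y_k)\to c^*$, so $\limsup_{|y|\to\infty}|\zeta(y)|\ge|c^*|>0$.

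For the dilation field, start from $\partial_tX=e^t\theta+\partial_t\tilde w\,\nu_C$, solve $e^t\theta=\partial_tX-\partial_t\tilde w\,\nu_C$ and substitute into $X=e^t\theta+\tilde w\,\nu_C$ to obtain $X(t,\theta)=\partial_tX+(\tilde w-\partial_t\tilde w)\nu_C$; since $\partial_tX\in T_X\Sigma$, pairing with $\nu_\Sigma$ yields
$$\zeta_0(X(t,\theta))=X\cdot\nu_\Sigma=\big(\tilde w(t,\theta)-\partial_t\tilde w(t,\theta)\big)\,\big(\nu_C(\theta)\cdot\nu_\Sigma(X(t,\theta))\big).$$
As $|\nu_C\cdot\nu_\Sigma|\le 1$ and $\|\tilde w(t,\cdot)\|_{C^0(\Gamma)}+\|\partial_t\tilde w(t,\cdot)\|_{C^0(\Gamma)}\to 0$, the right side tends to $0$ uniformly in $\theta$, and since $K_R$ is compact, $|y|\to\infty$ along $\Sigma$ forces $t\to\infty$; hence $\zeta_0(y)\to 0$. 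For the rotation fields, let $M\in\mathcal{A}_{N+1}\setminus\{0\}$: the upper bound $|\zeta_M(y)|=|My\cdot\nu_\Sigma|\le\|M\|\,|y|$ is immediate, while $MX(t,\theta)=e^tM\theta+\tilde w\,M\nu_C$ gives $\zeta_M(X(t,\theta))=e^t\big(M\theta\cdot\nu_\Sigma(X(t,\theta))\big)+O(|\tilde w|)=e^t\,(M\theta\cdot\nu_C(\theta))+o(e^t)$. Here $M\theta\cdot\nu_C\not\equiv 0$ on $\Gamma$, for otherwise the Killing field $x\mapsto Mx$ would be tangent to $C$ and the one--parameter subgroup $\{e^{sM}\}\subset O(N+1)$ would preserve $C$, contradicting (H4) since $M\neq 0$. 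Picking $\theta^*$ with $M\theta^*\cdot\nu_C(\theta^*)\neq 0$ and using $|X(t,\theta^*)|=e^t(1+o(1))$, we get $|\zeta_M(X(t,\theta^*))|/|X(t,\theta^*)|\to|M\theta^*\cdot\nu_C(\theta^*)|>0$; together with the linear upper bound this is the claimed linear growth.

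I expect the main obstacle to be the first paragraph: promoting the $C^2(\Gamma)$ decay of the graph function assumed in (H3) to decay of all $(t,\theta)$--derivatives, and deducing the uniform convergence of tangent planes and Gauss maps --- this relies on the minimal surface equation for $w$ and interior elliptic estimates on the Emden--Fowler cylinder (in the spirit of the arguments already invoked in Section~2). A secondary point requiring care is the structural fact that a minimal cone invariant under a nonzero translation splits off a line and hence carries a nontrivial reflection symmetry; this is how (H4) is brought to bear for the translation fields, whereas for the rotations (H4) applies directly because $e^{sM}$ is orthogonal. The remaining manipulations are routine bookkeeping with the parametrization $X$.
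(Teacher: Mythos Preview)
Your argument is correct, and for the dilation field and the rotation fields it follows the paper's route almost verbatim: you recover the formula $\zeta_0(X)=(\tilde w-\partial_t\tilde w)(\nu_C\cdot\nu_\Sigma)$, which is exactly the paper's \eqref{as-zeta} rewritten in Emden--Fowler coordinates, and for rotations both you and the paper reduce to $M\theta\cdot\nu_C\not\equiv 0$ and then invoke (H4). Your preliminary paragraph on upgrading the $C^2(\Gamma)$--decay of $w$ to decay of $\partial_t\tilde w$ via interior Schauder estimates on the Emden--Fowler cylinder is more explicit than the paper, which simply asserts $\nu_\Sigma\to\nu_C$ and, in the proof of Lemma~\ref{lemma-nu<0}, that $r\partial_r w\to 0$; the mechanism you describe is the right one and is implicit in the paper's Section~2 estimates.

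The genuine difference is in the translation case. After establishing that ${\bf a}\cdot\nu_C\equiv 0$ would force $C={\bf a}^\perp\cap C\times\R{\bf a}$, you contradict (H4) by observing that the reflection across ${\bf a}^\perp$ is a nontrivial orthogonal symmetry of $C$, which is impossible since you are taking $S$ to be trivial. The paper instead contradicts (H1) and (H3): either the factor $\tilde C$ is an affine hyperplane, making $C$ trivial, or $\tilde C$ is a nontrivial cone, in which case $\text{sing}(C)$ contains the whole line $\R{\bf a}$, so no unit normal to $C$ exists outside any ball and the normal--graph description in (H3) is impossible. Both routes are short, but they buy different things: your reflection argument is clean yet hinges on the literal hypothesis that $S$ is trivial (for a general maximal $S$ the reflection would merely land in $S$, and you would then need to push through (H2) to conclude $\zeta\equiv 0$ on $\Sigma$, which requires an extra step about how $\nu_\Sigma$ transforms under reflections); the paper's singular--set argument uses only (H1) and (H3) and therefore goes through unchanged for arbitrary $S$.
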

\begin{proof}
First we show \eqref{eqn:JF_transl_behav}. Performing a change of basis, if necessary, we only need to prove the statement when $\zeta=\zeta_j$. Let $R>0$ be arbitrary and large, but fixed. Fix particular choices of $\nu_\Sigma$ and $\nu_C$, the unit normal vector fields of $\Sigma$ and $C$, respectively, so that any $y\in\Sigma_R$ can be written as
$$
y=r\theta+w(r,\theta)\nu_C(r\theta) \qquad \hbox{ for }r>R\, \hbox{ and }\theta\in\Gamma.
$$
It holds that
$$|\nu_\Sigma\big(r\theta+w(r,\theta)\nu_C(r\theta)\big)-\nu_C(r\theta)|\to 0\qquad r\to\infty,$$
uniformly in $\theta\in\Gamma$. Consequently, for any $1\leq j\leq N+1$, $\zeta_j=\nu_{\Sigma}\cdot e_j$ and the corresponding Jacobi field of the cone $\bar{\zeta}_j(r\theta):=\nu_C(r\theta)\cdotp e_j$ satisfy that
\begin{equation}
\label{as-Jacobi-Sigma-C}
|\zeta_j\big(r\theta+w(r,\theta)\nu_C(r\theta)\big)-\bar{\zeta}_j(r\theta)|\to 0\qquad r\to\infty
\end{equation}
uniformly in $\theta\in\Gamma$.

We claim that the Jacobi field $\bar{\zeta}_j$ of $C$ does not vanish on $C$. Once the claim is proven, then \eqref{eqn:JF_transl_behav} follows for $\zeta_j$.  

\medskip
%\red{Error in the proof!}
In fact, if we assume by contradiction that there exists $j\in\{1,\dots,N+1\}$ such that $\bar{\zeta}_j=0$, then $e_j\in T_p C$ for any $p\in C\setminus{\rm sing}(C)$, where ${\rm sing}(C)$ denotes the singular set of $C$.\\

We claim that this fact yields that $C$ is a cylinder, in the sense that there exists a cone $\tilde{C}\subset\R^N$ of codimension $1$ in $\R^N$ such  that $C={\rm span}\{e_j\}\times\tilde{C}$.\\

In order to prove the claim, we fix $y\in C\setminus{\rm sing}(C)$ and a parametrisation $\varphi$ around $y$. Differentiating the relation $\nu_C(y)\cdotp e_{N+1}=0$ with respect to the local coordinates $x_i$ we can see that
$$d\nu_C(y)\partial_i\varphi\cdotp e_{N+1}=d\nu_C(y) e_{N+1}\cdotp\partial_i\varphi=0\qquad\forall\,i=1,\dots,N,$$
which gives $d\nu_C(y) e_{N+1}=0$. This yields that the principal curvature $\kappa_N$ vanishes identically, since $e_{N+1}$ is an eigenvector of $d\nu_C(y)$ with eigenvalue $0$. Since $y\in C$ is arbitrary, this proves the claim.
%In fact, assuming for instance that $j=N+1$, for any $p\in C\setminus{\rm sing} (C)$, there exists a local parametrisation $\varphi$ of $C\setminus{\rm sing} (C)$ around $p$ such that $\partial_{x_N}\varphi\equiv e_{N+1}$. More precisely, there exists a neighbourhood $U\subset\R^{N}$ of $0$, a neighbourhood $V\subset\R^{N+1}$ of $p$ and a smooth function $\varphi:U\to\R^{N+1}$ such that $\varphi(0)=p$, $\varphi(U)=V\cap (C\setminus{\rm sing} (C))$, $\varphi:U\to V\cap (C\setminus{\rm sing} (C))$ is a diffeomorphism, ${\rm ran}(d\varphi_x)=2$ for any $x\in U$ and $\partial_{x_N}\varphi(x',x_N)\equiv e_{N+1}$. Integrating in $x_N$ we can see that $U=U'\times\R$, where $U'\subset\R^{N-1}$ is a neighbourhood of $0$, and $$\varphi(x',x_N)=(\tilde{\varphi}(x'),x_N+\varphi_{N+1}(x'))\qquad\forall\,(x',x_N)\in U'\times\R,$$
%where $\tilde{\varphi}:U'\to \R^{N}$ and $\varphi_{N+1}:U'\to\R$ are smooth functions.\\ 

%Now we introduce the change of variables 
%\begin{equation}\notag
%\begin{aligned}
%\Phi&:U'\times\R\to U'\times\R \\
%(y',y_N)&:=\Phi(x',x_N)=(x',x_N+\varphi_{N+1}(x')).
%\end{aligned}
%\end{equation}
%A direct computation shows that $d\Phi_x$ is invertible for any $x\in U$, which shows that $\Phi$ is a local diffeomorphism. As a consequence, there exists a neighbourhood $W'\subset U'$ of $0$ such that the function $$\bar{\varphi}(y',y_N):=\varphi(\Phi^{-1}(y',y_N))=(\tilde{\varphi}(y'),y_N)\qquad\forall\,(y',y_N)\in W'\times\R$$
%is a local parametrisation of $C\setminus{\rm sing}(C)$ around $p$. Being $p$ arbitrary, this shows that $C$ is actually a cylinder.}\\
 
If $\tilde{C}=\R^{N-1}$, then $C=\R^N$, a contradiction with (H1). If $\tilde{C}$ is nontrivial, then the singular set of $C$ contains the straight line ${\rm span}\{e_j\}$, therefore there exists no hypersurface asymptotic to $C$, since there exists no $R>0$ such that a unit normal vector field is defined in $C\setminus B_R(0)$.\\

A similar argument shows that the Jacobi fields coming from nontrivial rotations are unbounded. In fact, let $M\in\mathcal{A}_{N+1}$ be an antisymmetric matrix (see \eqref{def:matrix_antisymm}) and assume that the Jacobi field $\zeta_M(y):=M y\cdotp \nu_\Sigma (y)$ is bounded. Then, arguing as in \eqref{as-Jacobi-Sigma-C}, together with \eqref{jacobi-fields-rot}, we see that the corresponding Jacobi field $\bar{\zeta}_M$ of $C$ is bounded. However, $\bar{\zeta}_M(r\theta)=rM\theta\cdotp \nu_C(\theta)$, and hence $\bar{\zeta}_M\equiv 0$. This yields that $C$ is invariant under the rotation $\rho\in SO(N+1)$ associated to $M$. The maximality assumption on $S$, that is (H4), yields that $\rho\in S$. Thus, from assumption (H2), we have $\rho(\Sigma)\subset\Sigma$, which in turn yields that $\zeta_M\equiv 0$. We remark that the previous argument and \eqref{as-Jacobi-Sigma-C} yield that the nontrivial Jacobi fields of $\Sigma$ coming from rotations have linear growth.\\

Finally, since the cone is homogeneous, its Jacobi field coming from dilations fulfills $\bar{\zeta}_0(r\theta):=r\theta\cdotp\nu_C(\theta)=0$, which yields that $\lim_{|y|\to\infty}\zeta_0(y)=0$.
\end{proof}

%On the other hand, the Jacobi fields are interpreted as normal perturbations that in a limit process they do not alter the minimality of $\Sigma$. This, together with the fact that $\Sigma$ is asymptotic to the cone $C$, yields that nontrivial rotations of $\Sigma$ generate unbounded geometric Jacobi fields. A similar argument yields that $\zeta_0$ decays at infinity on $\Sigma$. 

%\vskip 3pt

\begin{remark}
Assumptions (H1)-(H4) guarantee that the hypersurface $\Sigma$ inherits the symmetries of the cone. In other words, if $\rho\in O(N+1)$ and $\rho(C)\subset C$, then $\rho(\Sigma)\subset\Sigma$ too. This is crucial for the validity of Lemma \ref{lemma-dec-Jacobi}. In fact, let $C:=\{(x,y,z)\in\R^3:\,z^2=x^2+y^2\},\,S:=\{Id_{3}\}$ and let $M\in \mathcal{A}_3$ be the antisymmetric matrix related to rotations around the $z$-axis, that is, satisfying $M_{21}=1,\,M_{31}=M_{32}=0$. Then $$\bar{\zeta}_C(x,y,z)=\frac{1}{2z}(-y,x,0)\cdotp(-x,-y,z)=0\qquad\forall\,(x,y,z)\in C,$$
since the cone is axially symmetric, that is, invariant under rotations around the $z$-axis. However, introducing polar coordinates $(r,\vartheta)$ in $\R^2$, the graph $\Sigma$ of the function $u(x,y)=v(r,\vartheta)=r+\frac{r}{1+r^2}\cos\vartheta$ is asymptotic to $C$ but not axially symmetric. A computation shows that the corresponding function
$$\zeta_M(x,y,z)=\frac{(-y,x,0)\cdotp(-\partial_x u,-\partial_y u,1)}{\sqrt{1+|\nabla u|^2}}\to 0\qquad \text{as}\,|(x,y,z)|\to\infty$$
but it does not vanish identically on $\Sigma$.
\end{remark}

Now we can conclude the proof of Theorem \ref{th_jacobi-geom}.
\begin{proof}
This is a direct consequence of Lemma \ref{lemma-dec-Jacobi}.

\end{proof}

%\begin{remark}\label{rmk:proof_theo1.1} Observe that, in general, if the underlying cone is not $\R^N$, for any $\nu<\bar{\nu}$ then $D_\nu(\Sigma)\cap G(\Sigma)=\{0\}$. This is true for any minimal hypersurface which is asymptotic to a nontrivial cone, even without any nondeneracy assumption.
%\end{remark}

Now we will give a better result about the decay rate of $\zeta_0$ (see \eqref{def:rate_decay_zeta0}).
\begin{lemma}
\label{lemma-nu<0}
In the previous notations, we have $\bar{\nu}=-\frac{N-2}{2}+\bar{\delta}<0$, with $\bar{\delta}\in\{\pm{\rm Re}(\Lambda_j)\}_{j\ge 0}$.
\end{lemma}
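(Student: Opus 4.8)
The plan is to analyze the asymptotic behavior of $\zeta_0$ on the end $\Sigma_R$ using the Emden--Fowler change of variables and the separation-of-variables structure of the limiting operator. Recall from \eqref{eqn:changevar_phitou} that writing $\zeta_0(y)=|y|^{-\frac{N-2}{2}}u_0(y)$ transforms the Jacobi equation $J_\Sigma\zeta_0=0$ into $\mathcal{L}u_0=0$ on $\Sigma\setminus\{0\}$, and from \eqref{eq-SigmaR} that on the end, in the coordinates $y=e^t\theta+w(e^t,\theta)\nu_C(\theta)$, the equation for $\tilde{u}_0(t,\theta)$ reads
\begin{equation}\notag
\partial_{tt}\tilde u_0+\Delta_\Gamma\tilde u_0+\Big(|A_\Sigma|^2-\big(\tfrac{N-2}{2}\big)^2\Big)\tilde u_0+\mathcal{R}(\tilde u_0)=0,
\end{equation}
where $\mathcal{R}$ satisfies the exponentially small bound \eqref{est-remainder} and where $|A_\Sigma|^2\to|A_C|^2$ exponentially fast as $t\to\infty$. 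Since $|A_C|^2$ on $\Gamma$ is exactly the zeroth-order term of $-J_\Gamma$ up to sign, the limiting constant-coefficient operator is $\partial_{tt}+J_\Gamma$, whose bounded solutions in the $\lambda_j$-eigenspace behave like $e^{(-\frac{N-2}{2}\pm\mathrm{Re}(\Lambda_j))t}\cdot(\text{oscillation})$, i.e. as $|y|^{-\frac{N-2}{2}\pm\mathrm{Re}(\Lambda_j)}$ after undoing the change of variables.

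First I would record that $\zeta_0$ is a bounded Jacobi field decaying to zero (Lemma \ref{lemma-dec-Jacobi}), hence $u_0=|y|^{\frac{N-2}{2}}\zeta_0\in\mathcal{W}^{2,2}_{\delta}(\Sigma,|y|^{-N})$ for $\delta$ slightly larger than any $\mathrm{Re}(\Lambda_j)$ that could arise; more precisely $u_0$ lies in $K_\delta$ for every $\delta$ not an indicial exponent with $\delta>$ the relevant rate. Then I would invoke the standard asymptotic expansion for solutions of $\mathcal{L}u=0$ on an Emden--Fowler end with exponentially decaying perturbations — this is the content of Propositions~8.3.1--8.4.1 of \cite{P}, adapted exactly as in the proof of Lemma \ref{lemma:adjoint_oper} and Remark \ref{rmk:ellipt_estimates}: a nontrivial solution lying in some weighted space $\mathcal{W}^{2,2}_\delta$ must have a leading asymptotic term of the form $e^{d t}\psi(\theta)$ (possibly times a polynomial in $t$ in the resonant case) for some $d\in\{-\frac{N-2}{2}\pm\mathrm{Re}(\Lambda_j)\}$, with $\psi$ a corresponding eigenfunction combination. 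Translating back, $\zeta_0(y)\sim |y|^{d}(\log|y|)^k\psi(y/|y|)$, and by the very definition \eqref{def:rate_decay_zeta0} of $\bar\nu$, the exponent $\bar\nu$ equals this leading exponent $d$. This gives $\bar\nu=-\frac{N-2}{2}+\bar\delta$ with $\bar\delta\in\{\pm\mathrm{Re}(\Lambda_j)\}_{j\ge0}$.

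It remains to verify the strict inequality $\bar\nu<0$. Here I would argue by contradiction: if $\bar\nu\ge0$ then $\zeta_0$ does not decay, contradicting the last line of Lemma \ref{lemma-dec-Jacobi} which asserts $\lim_{|y|\to\infty}\zeta_0(y)=0$. One must also rule out $\bar\nu=0$, which corresponds to $\bar\delta=\frac{N-2}{2}$, i.e.\ $\Lambda_0=\frac{N-2}{2}$, i.e.\ $\lambda_0=0$; in that borderline case the leading term would be $|y|^0\psi$ with $\psi$ the first eigenfunction of $-J_\Gamma$, which is nonzero on $\Gamma$, again contradicting decay of $\zeta_0$. Hence the leading exponent is one of the negative indicial roots $-\frac{N-2}{2}+\mathrm{Re}(\Lambda_j)$ with $\mathrm{Re}(\Lambda_j)<\frac{N-2}{2}$, equivalently $\lambda_j<0$, and in fact only $\lambda_0$ is negative in the stable range — but I would not need this refinement, only that $\bar\delta\in\{\pm\mathrm{Re}(\Lambda_j)\}_{j\ge0}$ and $\bar\nu<0$.

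The main obstacle I anticipate is making rigorous the claim that a Jacobi field which a priori only belongs to a weighted $L^2$-type space actually admits a \emph{polyhomogeneous leading term} with exponent exactly an indicial root — one cannot merely quote a decay rate, one needs the sharp expansion. This requires carefully transplanting the indicial-root analysis of \cite{P} (Chapter 8) to the present non-compact, asymptotically Emden--Fowler setting, controlling the remainder $\mathcal{R}$ via \eqref{est-remainder} and a bootstrap/iteration argument so that $\mathcal{R}(\tilde u_0)$ is absorbed into lower-order terms. Once the expansion is in hand, identifying the exponent with $\bar\nu$ via \eqref{def:rate_decay_zeta0} and concluding $\bar\nu<0$ from Lemma \ref{lemma-dec-Jacobi} is routine.
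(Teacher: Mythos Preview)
Your approach is essentially the same as the paper's: use the Emden--Fowler structure on the end together with an asymptotic-expansion result from \cite{P} to identify the leading exponent of $\zeta_0$ with an indicial root, and then use the decay $\zeta_0\to 0$ (Lemma \ref{lemma-dec-Jacobi}) to force that root to be negative.

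The one concrete correction is your citation: Propositions 8.3.1--8.4.1 of \cite{P} give weighted elliptic estimates (this is how they are used in Lemma \ref{lemma:adjoint_oper} and Remark \ref{rmk:ellipt_estimates}), not the polyhomogeneous expansion you need. The paper instead invokes Lemma~11.1.4 of \cite{P}, which directly yields
\[
\zeta_0(y)=r^\nu\big(a\log r+b\big)\varphi_j(\theta)\,(1+o(1)),\qquad r\to\infty,
\]
for some $j\ge 0$, $\varphi_j$ an eigenfunction of $-J_\Gamma$, and $\nu\in\{-\tfrac{N-2}{2}\pm\mathrm{Re}(\Lambda_j)\}\cap(-\infty,0)$ (the intersection with $(-\infty,0)$ being forced by $\zeta_0\to 0$). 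This single invocation replaces both your expansion step and your separate by-contradiction treatment of the cases $\bar\nu>0$ and $\bar\nu=0$; once the sharp asymptotic is in hand, $\bar\nu=\nu<0$ is immediate. Your ``main obstacle'' paragraph correctly identifies exactly this need for a genuine leading-term expansion rather than a mere weighted bound --- you simply pointed to the wrong chapter of \cite{P}.
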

\begin{proof}
 In order to do so, we recall that from (H3),
\begin{equation}
\label{as-zeta}
\zeta_0(y)=r^2\partial_r(r^{-1}w)(1+o(1)) 
\end{equation}
as $r\to\infty$ uniformly in $\theta\in\Gamma$, where we have used the notation
$$y=r\theta+w(r,\theta)\nu_C(\theta)\qquad\forall\,y\in\Sigma_R$$
for $R>0$ large enough. Using the fact that $\Sigma$ is asymptotic to the cone at infinity, we have $w,r\partial_r w\to 0$ as $r\to\infty$ uniformly in $\theta\in\Gamma$, so that $\zeta_0(y)\to 0$ as $|y|\to\infty$.\\

On the other hand by Lemma $11.1.4$ of \cite{P}, we know that, denoting the eigenfunctions of $-J_\Gamma$ by $\varphi_j$, we have
\begin{equation}\notag
\label{pre-as-behaviour-zeta}
    \zeta_0(y)=r^\nu(a\log r+b)\varphi_j(\theta)(1+o(1))
\end{equation}
as $r\to\infty$ uniformly in $\theta\in\Gamma$, for some $j\ge 0$ and some $\nu\in\{-\frac{N-2}{2}\pm{\rm Re}(\Lambda_j)\}\cap(-\infty,0)$, with $a=0$ if $j\ge 1$. In particular, this yields that there exists $\nu<0$ such that
$$|\zeta_0(y)|\le c(1+|y|)^\nu,$$
for some constant $c>0$, which shows that $\bar{\nu}<0$.   
\end{proof}

\subsection{$S$-Dilation-nondegeneracy and symmetry}
In this subsection we will des\-cribe the role of the $S$-dilation-nondegeneracy and symmetry in solving the Jacobi equation \ref{eq-Jacobi}. In particular, we will see that the $S$-dilation-nondegeneracy allows us to prove that the Lagrange multipliers appearing in Proposition \ref{prop-right-inverse} vanish and that, if the right-hand side $f$ is $S$-invariant, then the solution $\phi$ can be chosen to be $S$-invariant as well.\\ 
%We denote the space of $S$-invariant Jacobi fields $\phi$ satisfying $\phi(1+|y|)^{-\bar{\nu}}\in L^\infty(\Sigma)$ by $D(\Sigma,S)$.
For $\delta\in\R$, we set
\begin{equation}
\begin{aligned}
L^2_\delta(\Sigma,S)&:=\{u\in L^2_\delta(\Sigma,|y|^{-N}):\, u\text{ is $S$-invariant}\},\\
W^{2,2}_\delta(\Sigma,S)&:=\{u\in \mathcal{W}^{2,2}_\delta(\Sigma,|y|^{-N}):\, u\text{ is $S$-invariant}\},\\
K_\delta(S)&:=K_\delta\cap W^{2,2}_\delta(\Sigma,S) \qquad \hbox{\big(see \eqref{def:Kernel_delta}\big)}.
\end{aligned}
\end{equation}
We set $v:=|y|^{\frac{N-2}{2}}\zeta_0$ and $\bar{\delta}:=\frac{N-2}{2} + \bar{\nu}$.
\begin{proposition}
\label{prop-non-degeneracy}
Let $\Sigma$ be $S$-dilation-nondegenerate. Then $K_\delta(S)=\{0\}$ for any $\delta<\bar{\delta}$.
%\color{black}\begin{itemize}
    %\item If $\bar{\delta}<\delta<{\rm Re}(\Lambda_0)$, then $K_{\delta}(S)={\rm span}\{v\}$ for any $\bar{\delta}<\delta\le-{\rm Re}(\Lambda_0)$.
    %\item If 
%\end{itemize}
\end{proposition}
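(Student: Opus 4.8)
The plan is to show that any element of $K_\delta(S)$ with $\delta<\bar\delta$ must vanish, by translating the statement through the change of variable \eqref{eqn:changevar_phitou} into a decay statement about $S$-invariant Jacobi fields and then invoking the $S$-dilation-nondegeneracy hypothesis. Let $u\in K_\delta(S)$, so $u\in\mathcal{W}^{2,2}_\delta(\Sigma,|y|^{-N})$ is $S$-invariant and $\mathcal{L}u=0$ in $\Sigma\setminus\{0\}$. Setting $\phi(y):=|y|^{-\frac{N-2}{2}}u(y)$, relation \eqref{eqn:Jac_transf} shows that $\phi$ is an $S$-invariant distributional (hence, by elliptic regularity and the smoothness of $\Sigma$, classical) solution of $J_\Sigma\phi=0$ on $\Sigma\setminus\{0\}$. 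First I would check that $\phi$ in fact extends to a Jacobi field on all of $\Sigma$: near the origin the membership $u\in\mathcal{W}^{2,2}_\delta(\Sigma,|y|^{-N})$, which controls $|y|^{-N/2}u$, $|y|^{1-N/2}\nabla u$, $|y|^{2-N/2}\nabla^2 u$ in $L^2$, forces enough integrability of $\phi$ to rule out a genuine singularity (the weight $|y|^{-N}$ is exactly the conformal factor that makes the origin a removable point for bounded-energy solutions), so $\phi\in C^2(\Sigma)$ and $J_\Sigma\phi=0$ on $\Sigma$.

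The second step is to extract the decay rate of $\phi$ from the weighted-Sobolev membership of $u$. Using the Emden--Fowler description \eqref{eqn:asympt_metric_Sigma_EmdenFowler} of the metric on $\Sigma_R$ and the asymptotic form \eqref{eq-SigmaR}--\eqref{est-remainder} of $\mathcal{L}$, the equation $\mathcal{L}u=0$ becomes, in the variables $(t,\theta)$ with $r=e^t$, a perturbation of $\partial_{tt}\tilde u+\Delta_\Gamma\tilde u-\big(\tfrac{N-2}{2}\big)^2\tilde u+|A_\Sigma|^2\tilde u=0$. By the standard asymptotic analysis on cylinders (Lemma 11.1.4 and Propositions 8.3.1, 8.4.1 of \cite{P}, exactly as used in Lemma \ref{lemma-nu<0}), a solution lying in the weighted $L^2$ space with weight exponent $\delta$ — i.e.\ with $\Gamma_{-\delta}|y|^{-N/2}u\in L^2(\Sigma)$ — satisfies $|\tilde u(t,\theta)|\le c\,e^{\delta' t}$ for some indicial exponent $\delta'\le\delta$, hence $|u(y)|\le c(1+|y|)^{\delta'}$ with $\delta'<\bar\delta$ (strictly, since $\delta<\bar\delta$ and $\bar\delta\in\{\pm{\rm Re}(\Lambda_j)\}$ by Lemma \ref{lemma-nu<0}, so the admissible indicial exponents below $\delta$ are all strictly below $\bar\delta$). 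Pulling back through \eqref{eqn:changevar_phitou} gives $|\phi(y)|\le c(1+|y|)^{\delta'-\frac{N-2}{2}}=c(1+|y|)^{\nu}$ for some $\nu<-\frac{N-2}{2}+\bar\delta=\bar\nu$. Therefore $\phi\in D_\nu(\Sigma,S)$ for some $\nu<\bar\nu$, and the definition of $S$-dilation-nondegeneracy (Definition \ref{def:S_dila_nond}(ii)) yields $\phi\equiv0$, hence $u\equiv0$.

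The main obstacle I expect is the bookkeeping in the second step: one must be careful that the weighted $L^2$ condition with exponent $\delta$ genuinely forces a pointwise decay $(1+|y|)^{\delta'}$ with $\delta'$ an \emph{indicial} exponent that is $\le\delta$, and then that $\delta<\bar\delta$ together with $\bar\delta$ itself being an indicial root (Lemma \ref{lemma-nu<0}) forces $\delta'<\bar\delta$ with strict inequality — this is what lets us land in the range $\nu<\bar\nu$ where nondegeneracy applies, rather than merely $\nu\le\bar\nu$ where $\zeta_0$ itself could live. A secondary point requiring care is the removability of the origin for $\phi$; if one prefers to avoid it, one can instead work on $\Sigma\setminus\{0\}$ throughout and note that the conclusion $D_\nu(\Sigma,S)=\{0\}$ in Definition \ref{def:S_dila_nond} is about $C^2(\Sigma)$ solutions, so establishing the extension is in any case necessary. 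Both points are routine given the machinery of \cite{P} already invoked in Lemmas \ref{lemma:adjoint_oper} and \ref{lemma-nu<0} and in Remark \ref{rmk:ellipt_estimates}.
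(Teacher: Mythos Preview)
Your proposal is correct and follows the same overall strategy as the paper: pull back $u\in K_\delta(S)$ to the Jacobi field $\phi=|y|^{-\frac{N-2}{2}}u$, establish a pointwise decay bound placing $\phi$ in $D_\nu(\Sigma,S)$ for some $\nu<\bar\nu$, and conclude $\phi=0$ from $S$-dilation-nondegeneracy.

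The one difference is in how the pointwise decay is extracted. The paper simply invokes Lemma~3.4 of \cite{agudelo2024jacobi}, which gives directly that $u\in K_\delta$ implies $u\in C^{2,\alpha}_{loc}(\Sigma)$ with $|u(y)|\le c(1+|y|)^\delta$; then $\nu:=-\tfrac{N-2}{2}+\delta<\bar\nu$ follows immediately from $\delta<\bar\delta$, with no reference to indicial roots. You instead route through the cylindrical asymptotics (Lemma~11.1.4 of \cite{P}) to get $|u(y)|\le c(1+|y|)^{\delta'}$ for an \emph{indicial} exponent $\delta'\le\delta$, and then worry about securing the strict inequality $\delta'<\bar\delta$. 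That detour, and the ``main obstacle'' you flag, are unnecessary: the exponent $\delta$ itself already does the job, so the bookkeeping about indicial roots below $\bar\delta$ never enters. Your concern about regularity of $\phi$ on compact pieces of $\Sigma$ is shared by the paper, which notes in one line that elliptic estimates on compact subsets of $\Sigma$ handle it.
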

\begin{proof}
%\begin{itemize}
%\item 
Lemma $3.4$ of \cite{agudelo2024jacobi} yields that if $\delta\in\R$ and $u\in K_\delta$ then $u\in C^{2,\alpha}_{loc}(\Sigma)$ and it fulfills the pointwise estimate $$|u(y)|\le c(1+|y|)^\delta \qquad\forall\, y\in\Sigma,$$
for some constant $c>0$. As a consequence, since $\delta<\bar{\delta}$, then the corresponding Jacobi field $\phi:=|y|^{-\frac{N-2}{2}}u$ is $S$-invariant and fulfills $\phi (1+|y|)^{\frac{N-2}{2}-\delta}\in L^\infty(\Sigma)$. Thus, $\phi\in D_\nu(\Sigma,S)$ for $$\nu:=-\frac{N-2}{2}+\delta<-\frac{N-2}{2}+\bar{\delta}=\bar{\nu}.$$ 
We note that in this step we also need the elliptic estimates in a compact subset of $\Sigma$ to conclude that $\phi$ is bounded in any bounded neighbourhood of $\Sigma$. Since $\Sigma$ is $S$-dilation-nondegenerate, we conclude that $\phi=0$. %and $\delta\le\bar{\delta}$, there exists $\alpha\in\R$ such that 
\end{proof}
For $\delta\in\R$, we set
\begin{equation}
\begin{aligned}
Y_\delta&:=\{u\in L^2_\delta(\Sigma,|y|^{-N}):\,\langle u,v\rangle=0,\,\forall\,v\in L^2_{-\delta}(\Sigma,S)\},\\ 
X_\delta&:=Y_\delta\cap \mathcal{W}^{2,2}_\delta(\Sigma,|y|^{-N}),\\
Y^\bot_\delta&:=\{u\in L^2_\delta(\Sigma,|y|^{-N}):\,\langle u,v\rangle_\delta=0,\,\forall\,v\in Y_\delta\},\\
X^\bot_\delta&:=\{u\in \mathcal{W}^{2,2}_\delta(\Sigma,|y|^{-N}):\,\langle u,v\rangle_\delta=0,\,\forall\,v\in X_\delta\}=Y_\delta^\bot\cap \mathcal{W}^{2,2}_\delta(\Sigma,|y|^{-N}),
\end{aligned}
\end{equation}
so that any function $u\in \mathcal{W}^{2,2}_\delta(\Sigma)$ can be written as
$$u=u^\parallel+u^\bot\in X_\delta\oplus X^\bot_\delta.$$
The projections $u\in \mathcal{W}^{2,2}_{\delta}(\Sigma,|y|^{-N}) \mapsto u^{\parallel}\in X_{\delta}$ and $u\in \mathcal{W}^{2,2}_{\delta}(\Sigma,|y|^{-N}) \mapsto u^{\bot}\in X_{\delta}^{\bot}$ are continuous with respect to the $L^2_\delta(\Sigma)$-norm, since
\begin{equation}
\label{proj-cont}
\|u^\parallel\|^2_{L^2_\delta(\Sigma)}+\|u^\bot\|^2_{L^2_\delta(\Sigma)}=\|u\|^2_{L^2_\delta(\Sigma)},\qquad\forall\,u\in \mathcal{W}^{2,2}_\delta(\Sigma,|y|^{-N}).
\end{equation}
\begin{lemma}
In the above notations, we have 
\begin{enumerate}
\item[i.] $Y^\bot_\delta=L^2_\delta(\Sigma,S)$.
\item[ii.] $X^\bot_\delta\subset W^{2,2}_\delta(\Sigma,S)$.
\end{enumerate} 
\end{lemma}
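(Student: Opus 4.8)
The plan is to exhibit a single projection on $L^2_\delta(\Sigma,|y|^{-N})$ whose kernel is $Y_\delta$ and whose image is $L^2_\delta(\Sigma,S)$, namely the $S$-averaging operator. Since by (H4) the group $S$ equals the full stabiliser $\{\rho\in O(N+1):\rho(C)\subset C\}$, which is closed in $O(N+1)$, the group $S$ is compact (if one does not wish to invoke (H4), one may replace $S$ by its closure, which changes none of the $S$-invariant subspaces above). Let $\mu_S$ be the normalised Haar measure on $S$; by (H2) each $\rho\in S$ restricts to an isometry of $(\Sigma,g)$, so
\begin{equation}\notag
\Pi u(y):=\int_S u(\rho^{-1}y)\,d\mu_S(\rho)
\end{equation}
is well defined. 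Because $|y|$, $d\sigma$ and $\Gamma_\delta=(1+|y|^2)^{\delta/2}$ are $S$-invariant and each $\rho$ commutes with $\nabla_\Sigma$, the change of variables $y\mapsto\rho y$ shows that $\Pi$ is a bounded projection ($\Pi^2=\Pi$) on each of the spaces $L^2_\delta(\Sigma,|y|^{-N})$ ($\delta\in\R$) and $\mathcal{W}^{2,2}_\delta(\Sigma,|y|^{-N})$, with image the corresponding $S$-invariant subspace; the same computation gives $\langle\Pi u,w\rangle=\langle u,\Pi w\rangle$ for $u\in L^2_\delta(\Sigma,|y|^{-N})$, $w\in L^2_{-\delta}(\Sigma,|y|^{-N})$, and $\langle\Pi u,v\rangle_\delta=\langle u,\Pi v\rangle_\delta$ for $u,v\in L^2_\delta(\Sigma,|y|^{-N})$. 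Hence, on the Hilbert space $\big(L^2_\delta(\Sigma,|y|^{-N}),\langle\cdot,\cdot\rangle_\delta\big)$, $\Pi$ is the orthogonal projection onto $L^2_\delta(\Sigma,S)$, and $L^2_\delta(\Sigma,|y|^{-N})=L^2_\delta(\Sigma,S)\oplus{\rm Ker}(\Pi)$ orthogonally for $\langle\cdot,\cdot\rangle_\delta$.

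For (i) I would first identify $Y_\delta={\rm Ker}(\Pi)$. Indeed $L^2_{-\delta}(\Sigma,S)=\Pi\big(L^2_{-\delta}(\Sigma,|y|^{-N})\big)$, so for $v\in L^2_\delta(\Sigma,|y|^{-N})$ one has $v\in Y_\delta$ iff $\langle v,\Pi w\rangle=0$ for all $w\in L^2_{-\delta}(\Sigma,|y|^{-N})$, iff $\langle\Pi v,w\rangle=0$ for all such $w$; by Lemma \ref{lemma-dual} the pairing $\langle\cdot,\cdot\rangle$ identifies $L^2_{-\delta}(\Sigma,|y|^{-N})$ with the dual of $L^2_\delta(\Sigma,|y|^{-N})$, so the latter is equivalent to $\Pi v=0$. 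Combining with the orthogonal decomposition above gives $Y_\delta^\bot=({\rm Ker}(\Pi))^\bot={\rm Im}(\Pi)=L^2_\delta(\Sigma,S)$, which is (i).

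For (ii), observe that $X_\delta=Y_\delta\cap\mathcal{W}^{2,2}_\delta(\Sigma,|y|^{-N})$ is dense in $Y_\delta$ for the $L^2_\delta$-norm: given $v\in Y_\delta$, pick $v_k\in\mathcal{W}^{2,2}_\delta(\Sigma,|y|^{-N})$ with $v_k\to v$ and replace $v_k$ by $v_k-\Pi v_k\in X_\delta$, which still converges to $v-\Pi v=v$ because $\Pi$ is continuous and $\Pi v=0$. Consequently $X_\delta^\bot=Y_\delta^\bot\cap\mathcal{W}^{2,2}_\delta(\Sigma,|y|^{-N})$, and (i) yields $X_\delta^\bot=L^2_\delta(\Sigma,S)\cap\mathcal{W}^{2,2}_\delta(\Sigma,|y|^{-N})=W^{2,2}_\delta(\Sigma,S)$; in particular $X_\delta^\bot\subset W^{2,2}_\delta(\Sigma,S)$. (Equivalently, more directly: for $u\in X_\delta^\bot$ one has $u-\Pi u\in X_\delta$ since $\Pi$ preserves $\mathcal{W}^{2,2}_\delta(\Sigma,|y|^{-N})$, while $\Pi u\in Y_\delta^\bot$ by (i); testing $u$ against $u-\Pi u\in X_\delta$ and subtracting the vanishing term $\langle\Pi u,u-\Pi u\rangle_\delta=0$ gives $\|u-\Pi u\|^2_{L^2_\delta(\Sigma,|y|^{-N})}=0$, whence $u=\Pi u$ is $S$-invariant.)

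The only genuinely delicate point is the bookkeeping between the two pairings: $Y_\delta$ is an annihilator for the duality pairing $\langle\cdot,\cdot\rangle$ whereas $Y_\delta^\bot$ is an orthogonal complement for the inner product $\langle\cdot,\cdot\rangle_\delta$, and the argument works precisely because the single operator $\Pi$ is simultaneously self-adjoint for both — a consequence of the $S$-invariance of all the weights — together with Lemma \ref{lemma-dual}. The remaining verifications are routine and I would only sketch them: that $\Pi$ is bounded on $\mathcal{W}^{2,2}_\delta(\Sigma,|y|^{-N})$, using that each $\rho\in S$ acts isometrically on $\Sigma$ so that $\nabla_{\Sigma}^{(j)}$ commutes with pull-back by $\rho$ and the weights are $\rho$-invariant, and that $\rho\mapsto u\circ\rho$ is a strongly continuous family of isometries so that the Bochner integral defining $\Pi u$ converges.
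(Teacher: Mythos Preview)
Your proof is correct and takes a genuinely different route from the paper's. The paper argues element by element: for (i) it first checks $L^2_\delta(\Sigma,S)\subset Y_\delta^\bot$ directly via $\langle u,v\rangle_\delta=\langle u\Gamma_{-\delta}^2,v\rangle$, and for the reverse inclusion it shows, for each fixed $\rho\in S$, that $u-u\circ\rho\in Y_\delta$ (by a change of variables) and that $u\circ\rho\in Y_\delta^\bot$ whenever $u\in Y_\delta^\bot$, whence $u-u\circ\rho\in Y_\delta\cap Y_\delta^\bot=\{0\}$. For (ii) the paper invokes density of $X_\delta$ in $Y_\delta$ exactly as you do, though without spelling out why it holds. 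Your approach replaces this case-by-case analysis with a single averaging projection $\Pi$ that is simultaneously self-adjoint for the duality pairing $\langle\cdot,\cdot\rangle$ and for the inner product $\langle\cdot,\cdot\rangle_\delta$, which makes the identification $Y_\delta={\rm Ker}(\Pi)$ and $Y_\delta^\bot={\rm Im}(\Pi)$ transparent and also supplies, for free, the density statement needed in (ii). The trade-off is that you need $S$ compact (hence Haar measure and a Bochner integral), which you obtain from (H4); the paper's argument works verbatim for an arbitrary subgroup $S\subset O(N+1)$, with no topology on $S$ required. Both routes are clean; yours is more structural, the paper's more elementary.
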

\begin{proof}
{\rm i.} $L^2_\delta(\Sigma,S)\subset Y_\delta^\bot$. In fact, for $u\in L^2_\delta(\Sigma,S)$ and $v\in Y_\delta$, we have
$$\langle u,v\rangle_\delta=\langle u\Gamma^2_{-\delta},v\rangle=0,$$
since $u\Gamma^2_{-\delta}\in L^2_{-\delta}(\Sigma,S)$.\\

The opposite inclusion is proved as follows. First we note that, for any $v\in L^2_{-\delta}(\Sigma,S)$,
\begin{equation}\label{eqn:u_ucomrho}
\begin{aligned}
\int_\Sigma (u-u\circ\rho)v|y|^{-N}d\sigma&=
\int_\Sigma uv|y|^{-N}d\sigma-\int_\Sigma u(y')(v\circ\rho^{-1})(y')|\rho^{-1}(y')|^{-N}|\det(\rho)|d\sigma\\
&=\int_\Sigma uv|y|^{-N}d\sigma-\int_\Sigma u(v\circ\rho^{-1})|y|^{-N}d\sigma\\ &=0,
\end{aligned}
\end{equation}
since $v=v\circ\rho^{-1}$. Hence $u-u\circ\rho\in Y_\delta$.

Next, we show that $u\circ\rho\in Y^\bot_\delta$, for any $u\in Y^\bot_\delta$. In order to do so, we note that, if $v\in Y_\delta$, then $v\circ\rho\in Y_\delta$ for any $\rho\in S$. This is shown with a change of variables as above, which yields that for any $v\in Y_\delta$ we have
$$\int_\Sigma (u\circ\rho) v \Gamma^2_{-\delta}|y|^{-N}d\sigma=
\int_\Sigma u(v\circ\rho^{-1}) \Gamma^2_{-\delta}|y|^{-N}d\sigma=0.$$

Finally, from the previous comment and \eqref{eqn:u_ucomrho}, we conclude that for any $u\in Y^\bot_{\delta}$ and for any $\rho\in S$, we have $u=u\circ\rho$.\\

{\rm ii.} If $u\in X^\bot_\delta$, then $\langle u,v\rangle_\delta=0$ for any $v\in X_\delta$. Since $X_\delta$ is dense in $Y_\delta$, then the same orthogonality condition holds for any $v\in Y_\delta$, so that $X^\bot_\delta\subset Y^\bot_\delta=L^2_\delta(\Sigma,S)$. Since $X^\bot_\delta\subset \mathcal{W}^{2,2}_\delta(\Sigma,|y|^{-N})$, we have the statement. This concludes the proof of the lemma. 
\end{proof}

\begin{remark}
\label{rem-projections}
\begin{enumerate}
\item We note that $A_\delta(W^{2,2}_\delta(\Sigma,S))\subset L^2_\delta(\Sigma,S)$, which yields that $A_\delta(X_\delta)\subset Y_\delta$. In fact, for any $u\in X_\delta$ and $v\in W^{2,2}_{-\delta}(\Sigma,S)$, we have
$$\langle\mathcal{L}u,v\rangle=\langle u,\mathcal{L}v\rangle=0,$$
since $\mathcal{L}v\in L^2_{-\delta}(\Sigma,S)$. Using that $W^{2,2}_{-\delta}(\Sigma,S)$ is dense in $L^2_{-\delta}(\Sigma,S)$, we have the statement.
\item $A_\delta(X_\delta^\bot)\subset A_\delta(W^{2,2}_{\delta}(\Sigma,S))\subset L^2_{\delta}(\Sigma,S)=Y_\delta^\bot.$
\item For any $u\in K_\delta$, the projections onto $X_\delta$ and $X_\delta^\bot$ satisfy $u^\parallel\in K_\delta$ and $u^\bot\in K_\delta$. In fact $u^\parallel\in X_\delta$, $u^\bot\in X_\delta^\bot$ and by point $(2)$ we have $\mathcal{L}u^\parallel=-\mathcal{L}u^\bot\in Y_\delta\cap Y_\delta^\bot=\{0\}$. Equivalently $$K_\delta=(K_\delta\cap X_\delta)\oplus (K_\delta\cap X_\delta^\bot).$$
As a consequence, if the set $\{v_{\delta,j}\}_{1\le j\le d(\delta)}$ is an  orthonormal basis (with respect to the scalar product in \eqref{def-scalar-prod}) for  $K_{\delta}$ and $0<d'(\delta):=\dim(K_\delta\cap X_\delta)<d(\delta)$, up to a change of basis in $K_\delta$ we can assume that $v_{\delta,1},\dots,v_{\delta,d'(\delta)}\in K_\delta\cap X_\delta$ and $v_{\delta,d'(\delta)+1},\dots,v_{\delta,d(\delta)}\in K_\delta\cap X^\bot_\delta$.
\end{enumerate}
\end{remark}
Now we are ready to prove Theorem \ref{prop-right-inverse-Jacobi}.
\begin{proof}
%\begin{itemize}
%\item 
Recall that $\bar{\delta} = \frac{N-2}{2} +\bar{\nu}$, so that $2-N-\bar{\nu}=-\frac{N-2}{2}-\bar{\delta}$.

Since $f\in C^{0,\alpha}_{\nu-2}(\Sigma,S)$ with $\nu>2-N-\bar{\nu}=-\frac{N-2}{2}-\bar{\delta}$, $\nu\notin\{-\frac{N-2}{2}\pm{\rm Re}(\Lambda_j)\}_{j\ge 0}$, then $g:=|y|^{\frac{N+2}{2}}f\in L^{2}_{\delta}(\Sigma,S)$ for any $\delta>\frac{N-2}{2}+\nu=:\delta'$.\\ 

Fix $\delta\in \R$ such that 
\begin{equation}\label{eqn:choice_delta_Teo1.2}
\delta'<\delta<\min\{\{\pm{\rm Re}(\Lambda_j)\}_{j\ge 0}\cap(\delta',\infty)\}.    
\end{equation}

By Proposition \ref{prop-right-inverse} we can find a solution $u\in \mathcal{W}^{2,2}_{\delta}(\Sigma,|y|^{-N})$ to $$\mathcal{L}u=g-\sum_{j=1}^{d(-\delta)}\mu_j v_{-\delta,j}\Gamma^2_{\delta},\qquad\text{$\mu_j:=\langle g,v_{-\delta,j}\rangle$ in $\Sigma$}$$ 
such that $$\|u\|_{W^{2,2}_{\delta}(\Sigma)}+\sum_{j=1}^{d(-\delta)}|\mu_j|\le c\|g\|_{L^2_{\delta}(\Sigma)}.$$
Using that $g\in L^2_{\delta}(\Sigma,S)$, we have $\langle g,v\rangle=0$ for any $v\in K_{-\delta}\cap X_{-\delta}$, thus by point $(3)$ of Remark \ref{rem-projections} $$\Gamma^2_{-\delta}(\mathcal{L}u-g)\in K_{-\delta}\cap X_{-\delta}^\bot\subset K_{-\delta}\cap W^{2,2}_{-\delta}(\Sigma,S)=K_{-\delta}(S).$$

Our choice of $\delta$ in \eqref{eqn:choice_delta_Teo1.2} implies that 
$$-\delta<-\frac{N-2}{2}-\nu\le\bar{\delta}.$$ 

Proposition \ref{prop-non-degeneracy} yields that $K_{-\delta}(S)=\{0\}$, so that $\mathcal{L}u=g$ and $\|u\|_{W^{2,2}_{\delta}(\Sigma)}\le c\|g\|_{L^2_{\delta}(\Sigma)}$. %In particular, once again by Proposition \ref{prop-non-degeneracy}, $K_\delta(S)=\{0\}$ if $\bar{\delta}\ge-{\rm Re}(\Lambda_0)$, which shows that $\mu=0$ if $\bar{\delta}\ge-{\rm Re}(\Lambda_0)$.\\

Using the decomposition $$u=u^\parallel+u^\bot\in X_{\delta}\oplus X^\bot_{\delta}$$
Remark \ref{rem-projections} and the symmetry properties of $\zeta_0$, it is possible to see that $$\mathcal{L}u^\parallel=g-\mathcal{L}u^\bot\in  Y_{\delta}\cap Y^\bot_{\delta}=\{0\}.$$
Therefore $u^\bot\in W^{2,2}_{\delta}(\Sigma,S)$ solves $\mathcal{L}u^\bot=g$ and $$\|u^\bot\|_{L^{2}_{\delta}(\Sigma)}\le\|u\|_{L^{2}_{\delta}(\Sigma)}\le c\|g\|_{L^2_{\delta}(\Sigma)}.$$
By Proposition $8.3.1$ of \cite{P}, we can see that $\|u^\bot\|_{W^{2,2}_{\delta}(\Sigma)}\le c\|g\|_{L^2_{\delta}(\Sigma)}$.\\

%In particular, if $|\bar{\delta}|={\rm Re}(\Lambda_0)$, then by Lemma \ref{lemma-nondeg} we have $K_{-\delta}(S)=\{0\}$ for $-\delta<-\delta'\le{\rm Re}(\Lambda_0)$, which yields that $\mu=0$.\\

As a consequence, equation (\ref{eq-Jacobi}) is solved by setting $\phi:=|y|^{-\frac{N-2}{2}}u^\bot$.\\

In order to prove (\ref{est-Jacobi-eq-lin}) we observe that, since $(\delta',\delta)\cap\{\pm{\rm Re}(\Lambda_j)\}_{j\ge 0}=\emptyset$, by Proposition $12.2.1$ of \cite{P}, if $f\in C^{0,\alpha}_{\nu-2}(\Sigma)$, then $u^\bot\in C^{2,\alpha}_{\delta'}(\Sigma)$ and
$$\|u^\bot\|_{C^{2,\alpha}_{\delta'}(\Sigma)}\le c(\||y|^\frac{N+2}{2}f\|_{C^{0,\alpha}_{\delta'}(\Sigma)}+\|u^\bot\|_{L^2_{\delta}(\Sigma)}).$$
Finally we note that
\begin{equation}\notag
\begin{aligned}
    &\||y|^\frac{N+2}{2}f\|_{C^{0,\alpha}_{\delta'}(\Sigma)}\le c\|f\|_{C^{0,\alpha}_{\nu-2}(\Sigma)},\\
    &\|u^\bot\|_{L^2_{\delta}(\Sigma)}\le c\||y|^\frac{N+2}{2}f\|_{L^2_{\delta}(\Sigma)}\le c\||y|^\frac{N+2}{2}f\|_{C^{0,\alpha}_{\delta'}(\Sigma)}\le c\|f\|_{C^{0,\alpha}_{\nu-2}(\Sigma)},
    \end{aligned}
\end{equation}
which concludes the proof of the estimate, since $\phi=|y|^{-\frac{N-2}{2}}u^\bot$.
%\item Since $f\in C^{0,\alpha}_{\nu-2}(\Sigma,S)$, then $g:=|y|^{\frac{N+2}{2}}f\in L^{2}_{\delta}(\Sigma,S)$, thanks to the choice of $\nu$ and $\delta$, hence by Proposition \ref{prop-right-inverse} we can find a solution $u\in W^{2,2}_{\delta}(\Sigma)$ to equation
%$$\mathcal{L}u=g-\sum_{j=1}^{d(-\delta)}\mu_j v_{-\delta,j}\Gamma^2_{\delta},\qquad\text{$\mu_j:=\langle g,v_{-\delta,j}\rangle$ in $\Sigma$}$$ 
%such that $$\|u\|_{W^{2,2}_{\delta}(\Sigma)}+\sum_{j=1}^{d(-\delta)}|\mu_j|\le c\|g\|_{L^2_{\delta}(\Sigma)}.$$
%Using that $g\in L^2_{\delta}(\Sigma,S)$, we have $\langle g,v\rangle=0$ for any $v\in K_{-\delta}\cap X_{-\delta}$, thus by point $(3)$ of Remark \ref{rem-projections} $$\Gamma^2_{-\delta}(\mathcal{L}u-g)\in K_{-\delta}\cap X_{-\delta}^\bot\subset K_{-\delta}\cap W^{2,2}_{-\delta}(\Sigma,S)=K_{-\delta}(S).$$ 
%Since $\bar{\delta}<-\delta\le{\rm Re(\Lambda_0)}$, 
%Proposition \ref{prop-non-degeneracy} yields that $K_{-\delta}(S)={\rm span}\{v\}$, so that $\mathcal{L}u=g-\mu v\Gamma_\delta^2$ and $\|u\|_{W^{2,2}_{\delta}(\Sigma)}+|\mu|\le c\|g\|_{L^2_{\delta}(\Sigma)}$.\\

%Replacing, if necessary, $u$ with $u^\bot$ as above, it is possible to find a solution $u\in W^{2,2}_\delta(\Sigma,S)$ such that the function $\phi:=|y|^{-\frac{N-2}{2}}u$ fulfils (\ref{eq-Jacobi-lagr}) and (\ref{est-Jacobi-eq-lin}).
%\end{itemize}
\end{proof}   
%\begin{remark}
    %In case $\bar{\delta}=\pm\Lambda_j\in \R\backslash\{0\}$ with $j\ge 1$, we are interested in Proposition \ref{prop-right-inverse-Jacobi} in case $\delta\in\left(\frac{N-2}{2}+\nu,\pm\Lambda_j\right)$, so that $|y|^{-2}\zeta^2\Gamma_\delta^2\in L^1(\Sigma)$.
%\end{remark}
\section{Examples}
In this section we study the nondegeneracy properties of some specific minimal hypersurfaces, including the ones constructed in Theorem \ref{th-Sigma}, \ref{th-Al} and \ref{th-Sigma-low-dim}. Then, we consider the Jacobi equation $J_{\Sigma}\phi =f$ on the hypersurfaces $\Sigma$ constructed in Theorems \ref{th-Sigma} and \ref{th-Al} with $f={\rm tr}(A_\Sigma^3)$ and solve it by applying the estimate \eqref{est-Jacobi-eq-lin} from Theorem \ref{prop-right-inverse-Jacobi}. In particular, we are interested in the decay rate of the solution $\phi$ at infinity, which plays a crucial in role in the construction of solutions to the Allen-Cahn equation (see for instance \cite{agudelo2022doubling,agudelo2022k,del2011giorgi,del2013entire,pacard2013stable}). Finally, we will show an example of degenerate hypersurface.%We will see that point $(1)$ of Proposition \ref{prop-right-inverse-Jacobi} gives a solution which is decaying. However, its decay rate is not sharp. In fact we can get a solution with a faster decay using the symmetry of the problem, which reduces the Jacobi equation to an ODE.

\color{black}\subsection{Examples of $S$-dilation-nondegenerate minimal hypersurfaces}
The first example is given by stable minimal hypersurfaces which does not intersect their asymptotic cone.
\begin{proposition}
\label{prop-dil-nondeg}
Assume (H1)-(H4). Assume also that $\Sigma$ is stable and that $\Sigma$ does not intersect the cone $C$. Then, $\Sigma$ is dilation-nondegenerate.

If in addition $\Sigma$ is $S$-invariant, then $\Sigma$ is also $S$-dilation-nondegenerate.
\end{proposition}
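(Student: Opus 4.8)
The plan is to prove first that the dilation Jacobi field $\zeta_0(y)=y\cdot\nu_\Sigma(y)$ keeps a constant sign on $\Sigma$, and then to argue against faster decaying Jacobi fields by a Liouville-type maximum principle. Recall that $\bar\delta=\frac{N-2}{2}+\bar\nu$, so it suffices to show $\bar\delta>-\frac{N-2}{2}$ and that every $\phi\in D_\nu(\Sigma)$ with $\nu<\bar\nu$ vanishes identically. First I would observe that, since $\Sigma$ is stable and $C$ is its blow-down limit at infinity, $C$ is stable as well (stability passes to such limits), so all the relevant $\Lambda_j$ are real. Because $\Sigma\cap C=\emptyset$, the normal-graph function $w$ from (H3) never vanishes on $(R,\infty)\times\Gamma$, hence keeps a constant sign there. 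By the asymptotic analysis behind Lemma~\ref{lemma-nu<0} (Simon-type asymptotics, cf. Proposition~8.3.1 and Lemma~11.1.4 in \cite{P}), the leading term of $w$ at infinity is $c\,r^{-\gamma}\varphi_j(\theta)(1+o(1))$, possibly times $\log r$, with $\gamma=\frac{N-2}{2}\pm\Lambda_j>0$; the constant sign of $w$ forces $\varphi_j$ to have constant sign, hence $j=0$, and $\gamma<N-2$, because $C$ nontrivial gives $\lambda_0<0$, so $0\le\Lambda_0<\frac{N-2}{2}$. Differentiating and using $\zeta_0=(r\partial_r w-w)(1+o(1))$, the leading term of $\zeta_0$ is $-(\gamma+1)c\,r^{-\gamma}\varphi_0(\theta)(1+o(1))$, possibly times $\log r$; since $\varphi_0>0$ on the compact $\Gamma$, this shows that $\zeta_0$ keeps a constant sign for $|y|$ large, that $\bar\nu=-\gamma>2-N$, and that $|\zeta_0(y)|\ge c_0(1+|y|)^{\bar\nu}$ for $|y|$ large.

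Next I would globalize the sign of $\zeta_0$ using stability. Normalizing so that $\zeta_0>0$ outside a compact set, suppose $\Omega:=\{\zeta_0<0\}$ is nonempty; then it is a nonempty, precompact, open subset of $\Sigma$. On any connected component $\Omega_0$ of $\Omega$, the function $-\zeta_0$ is positive in $\Omega_0$, vanishes on $\partial\Omega_0$, and solves $-J_\Sigma(-\zeta_0)=0$, so it is a first Dirichlet eigenfunction of $-J_\Sigma$ on $\Omega_0$ with eigenvalue $0$. Choosing a bounded connected open set $\Omega'$ with $\Omega_0\subsetneq\Omega'\subset\Sigma$, strict domain monotonicity of the first Dirichlet eigenvalue gives $\lambda_1(-J_\Sigma,\Omega')<0$, which contradicts the stability of $\Sigma$. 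Hence $\zeta_0\ge 0$ on $\Sigma$; and since $\zeta_0\not\equiv 0$ (otherwise $\Sigma$ would be a cone, incompatible with (H1) and $\Sigma\cap C=\emptyset$), the strong maximum principle gives $\zeta_0>0$ everywhere.

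Finally, for $\nu<\bar\nu$ and $\phi\in D_\nu(\Sigma)$, I would set $\psi:=\phi/\zeta_0\in C^2(\Sigma)$. Using $J_\Sigma\phi=J_\Sigma\zeta_0=0$, a direct computation gives $\Delta_\Sigma\psi+2\nabla_\Sigma(\log\zeta_0)\cdot\nabla_\Sigma\psi=0$, a second-order linear elliptic equation with no zeroth-order term. From $|\phi(y)|\le c(1+|y|)^\nu$ and the lower bound on $\zeta_0$ we get $|\psi(y)|\le c(1+|y|)^{\nu-\bar\nu}\to 0$ as $|y|\to\infty$. If $\psi\not\equiv 0$, then $\sup_\Sigma\psi$ or $\inf_\Sigma\psi$ is a nonzero real number which, by the decay of $\psi$, is attained at an interior point; the strong maximum principle then forces $\psi$ to be a nonzero constant, contradicting $\psi\to 0$. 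Thus $\psi\equiv 0$, i.e. $\phi\equiv 0$, so $D_\nu(\Sigma)=\{0\}$ for all $\nu<\bar\nu$; together with $\bar\nu>2-N$ this shows that $\Sigma$ is dilation-nondegenerate. The additional statement is then immediate: for any subgroup $S$ under which $\Sigma$ is invariant one has $D_\nu(\Sigma,S)\subseteq D_\nu(\Sigma)=\{0\}$, so $\Sigma$ is $S$-dilation-nondegenerate as well.

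The step I expect to be the main obstacle is the passage from ``$\zeta_0$ has constant sign near infinity'' to ``$\zeta_0$ has constant sign on all of $\Sigma$'': this is exactly where the stability of $\Sigma$ is genuinely used, via domain monotonicity of the first Dirichlet eigenvalue on the nodal domains of $\zeta_0$. A second, more technical point is to extract from the asymptotics of $w$ the \emph{uniform} lower bound $|\zeta_0(y)|\gtrsim(1+|y|)^{\bar\nu}$ (and not merely a subsequential estimate), which is what allows the Liouville step to close.
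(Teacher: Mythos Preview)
Your argument is correct and takes a genuinely different route from the paper's. The paper's proof is almost entirely a citation: for $\bar\nu>2-N$ it quotes Remark~3.2 of \cite{agudelo2024jacobi} to get $\bar\nu\in\{-\frac{N-2}{2}\pm\Lambda_0\}$, combines this with $\lambda_0\le 0$ (hence $\Lambda_0\le\frac{N-2}{2}$), and disposes of the borderline case $\lambda_0=0$ by observing that $\Gamma$ would then be an equatorial sphere; for the vanishing of $D_\nu(\Sigma)$ it invokes Proposition~3.1 of \cite{agudelo2024jacobi}, which gives injectivity of $A_\delta$ for $\delta<\bar\delta$, and concludes via $D_\nu(\Sigma)\subset|y|^{-\frac{N-2}{2}}K_\delta=\{0\}$.

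Your approach is more geometric and self-contained: you read off $j=0$ and the value of $\bar\nu$ directly from the sign of the graph function $w$ (using $\Sigma\cap C=\emptyset$), then use stability only to rule out interior nodal domains of $\zeta_0$ via domain monotonicity of the first Dirichlet eigenvalue, and finally obtain injectivity by the Liouville argument on $\psi=\phi/\zeta_0$. This makes transparent exactly where each hypothesis enters, and in effect unpacks what Proposition~3.1 of \cite{agudelo2024jacobi} is presumably doing behind the scenes (a positive Jacobi field plus the maximum principle). The price is that you need the Simon/Hardt--Simon asymptotic expansion for $w$ itself---including differentiability of the expansion, so that the leading term of $r\partial_r w-w$ is read off correctly---which is standard but not literally the statement of Lemma~11.1.4 of \cite{P} as cited in the paper; you might cite \cite{HS} directly for that step. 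The uniform lower bound $|\zeta_0(y)|\gtrsim(1+|y|)^{\bar\nu}$ that you flag as delicate is in fact immediate once you have the asymptotic $\zeta_0\sim -c(\gamma+1)r^{-\gamma}\varphi_0(\theta)$ with $\varphi_0$ bounded away from zero on the compact $\Gamma$, together with the strict positivity of $\zeta_0$ on compact subsets from your step~2.
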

\begin{proof} Recall the space $D_\nu(\Sigma)$ in \eqref{def:jacobi_fields_decay} and the Definition \ref{def:S_dila_nond}. We show first that $\bar{\nu}>2-N$ and then we show that $D_\nu(\Sigma)=\{0\}$ for any $\nu<\bar{\nu}$. We proceed as follows. Since $\Sigma$ is stable, from Remark $3.2$ in \cite{agudelo2024jacobi}, we conclude that
$$\Lambda_0^2=\Big(\frac{N-2}{2}\Big)^2 + \lambda_0\ge 0 \quad \hbox{and}\quad \bar{\nu}\in\left\{-\frac{N-2}{2}\pm\Lambda_0\right\}.
$$ 

Using for instance Lemma 2.1 in \cite{agudelo2024jacobi}, $\lambda_0\le 0$ and equality holds if and only if $\Gamma$ is a union of equatorial spheres. These observations yield that 
\begin{equation}
\label{barnu>2-N}
\bar{\nu}\ge-\frac{N-2}{2}-\Lambda_0\ge 2-N.
\end{equation}

Now, to show that $\bar{\nu}>2-N$ we analyze two separate cases: $\lambda_0<0$ and $\lambda_0=0$.

\vskip 3pt
If $\lambda_0<0$ the strict inequality $\bar{\nu}>2-N$ follows from \eqref{barnu>2-N}. If $\lambda_0=0$, then for some $k\in \mathbb{N}\cup \{\infty\}$, $\Gamma$ is the union of $k$ equatorial spheres in $\mathbb{S}^N$. If $k=1$, then $C=\R^{N}$ is strictly minimising in $\R^{N+1}$, since $N\ge 3$ (see \cite{pacard2013stable, HS}), which yields that $$\bar{\nu}=-\frac{N-2}{2}+\Lambda_0=0>2-N.
$$

The case $k>1$ does not occur. If it did, then $C$ would be a cylinder, that is $C=\tilde{C}\times \R$, for some cone $C\subset\R^{N}$. Let us assume for the moment, after performing rotations if necessary, that $\Gamma$ contains an equatorial sphere in the hyperplane $\{x_1=0\}$ of $\R^{N+1}$ and another one in the hperplane $\{x_2=0\}$. Then the singular set of $C$ would contain a line and this would contradict the existence of asymptotic hypersurfaces in the sense of our definition. This concludes the proof of $\bar{\nu}> 2-N$.

\vskip 3pt

Now, let $\nu <\bar{\nu}$. We show that $D_{\nu}(\Sigma)=\{0\}$. To do so, we proceed as follows. First, write $\nu = -\frac{N-2}{2} +\delta$ and notice that $\delta < \bar{\delta}:=\frac{N-2}{2}-\bar{\nu}$. In view of Proposition $3.1$ of \cite{agudelo2024jacobi}, $A_{\delta}$ is injective. The conclusion follows by noticing that
$$
D_{\nu}(\Sigma) \subset |y|^{-\frac{N-2}{2}}K_{\delta}=\{0\}.
$$
\end{proof}
Considering our explicit examples of minimal hypersurfaces asymptotic to a cone, we have the following result.
\begin{lemma}
\label{lemma-nondeg-examples}
\begin{enumerate}
\item The minimal hypersurfaces constructed in Theorem \ref{th-Sigma}  are dilation-nondegenerate and fulfill $\Lambda_0>0$ and $\bar{\nu}=-\frac{N-2}{2}+\Lambda_0$ if the underlying cone $C$ is strictly minimising, $\bar{\nu}=-\frac{N-2}{2}-\Lambda_0$ if $C$ is minimising but not strictly.
\item The minimal hypersurfaces constructed in Theorem \ref{th-Al}  are dilation-nondegenerate and fulfill $\Lambda_0>0$ and $\bar{\nu}=-\frac{N-2}{2}+\Lambda_0$.
\item The minimal hypersurfaces constructed in Theorem \ref{th-Sigma-low-dim} are $O(m)\times O(n)$-nondegenerate and fulfill ${\rm Re}(\Lambda_0)=0$, $\bar{\nu}=-\frac{N-2}{2}=-\frac{N-2}{2}\pm{\rm Re}(\Lambda_0)$.
\end{enumerate}
\end{lemma}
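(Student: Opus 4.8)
The plan is to verify the three claims one hypersurface family at a time, in each case first identifying the bottom eigenvalue $\lambda_0$ of $-J_\Gamma$ (equivalently the sign of $\Lambda_0^2=\left(\frac{N-2}{2}\right)^2+\lambda_0$) from known stability information about the underlying cone, then invoking Proposition \ref{prop-dil-nondeg} together with the asymptotic description of $\zeta_0$ in Lemma \ref{lemma-nu<0} to pin down $\bar\nu$. For the hypersurfaces $\Sigma^\pm$ of Theorem \ref{th-Sigma} and $\Sigma^\pm_{m,n}$ of Theorem \ref{th-Al}, the essential input is that the cone $C$ (resp.\ the Lawson cone $C_{m,n}$ with $m+n\ge 8$) is area-minimising, hence stable, and in fact \emph{strictly} stable in all these cases except when $C$ is minimising but not strictly minimising; consequently $j_0=0$ and $\Lambda_0>0$ (or $\Lambda_0=0$ in the borderline non-strict case). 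Since in Theorems \ref{th-Sigma}, \ref{th-Al} the hypersurface does not intersect its asymptotic cone and is stable (being area-minimising), Proposition \ref{prop-dil-nondeg} applies directly and gives dilation-nondegeneracy. It then remains to decide which of the two indicial roots $-\frac{N-2}{2}\pm\Lambda_0$ equals $\bar\nu$: I would argue that $\zeta_0=y\cdot\nu_\Sigma$ does not change sign at infinity (this uses that $\Sigma$ lies strictly on one side of $C$, so that the graph function $w$ has a definite sign and a definite-sign radial derivative for large $r$, cf.\ \eqref{as-zeta}), which forces the spherical factor in the expansion of Lemma \ref{lemma-nu<0} to be the \emph{first} eigenfunction $\varphi_0$; the corresponding exponent in the strictly-stable case is the slower-decaying one $-\frac{N-2}{2}+\Lambda_0$, while in the non-strictly-minimising case $\Lambda_0=0$ and the relevant indicial behaviour is the degenerate one, giving $\bar\nu=-\frac{N-2}{2}-\Lambda_0=-\frac{N-2}{2}$ (here one appeals to Remark $3.2$ of \cite{agudelo2024jacobi}, where precisely this $|y|^{-\frac{N-2}{2}}\log|y|$-type behaviour is discussed).

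For part (3), the hypersurfaces $\Sigma_{m,n}$ of Theorem \ref{th-Sigma-low-dim} with $m+n\le 7$ are asymptotic to a Lawson cone $C_{m,n}$ which is stable but \emph{not} strictly stable, so that $\Lambda_0^2=\left(\frac{N-2}{2}\right)^2+\lambda_0=0$, i.e.\ ${\rm Re}(\Lambda_0)=0$ and the two indicial roots coincide at $-\frac{N-2}{2}$. Here $\Sigma_{m,n}$ intersects $C_{m,n}$ infinitely often, so the "one-sided" Proposition \ref{prop-dil-nondeg} is \emph{not} available and I must argue $S$-dilation-nondegeneracy with $S=O(m)\times O(n)$ directly from Definition \ref{def:S_dila_nond}: one needs $\bar\nu>2-N$ and $D_\nu(\Sigma_{m,n},S)=\{0\}$ for every $\nu<\bar\nu$. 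The first is immediate once $\bar\nu=-\frac{N-2}{2}$ is established, since $-\frac{N-2}{2}>2-N\iff N>2$. For the second, I would reduce to an ODE: by the $O(m)\times O(n)$ symmetry, an $S$-invariant Jacobi field on $\Sigma_{m,n}$ is a function of a single variable parametrising the profile curve, and the Jacobi equation becomes a second-order linear ODE whose solutions near the (two) ends have the asymptotics $r^{-\frac{N-2}{2}}(a\log r+b)$ dictated by the indicial roots at $-\frac{N-2}{2}$; a solution in $D_\nu(\Sigma_{m,n},S)$ with $\nu<-\frac{N-2}{2}$ would have to decay strictly faster than this on \emph{both} ends, and a Wronskian / unique-continuation argument (or the injectivity statement $D_\nu\subset|y|^{-\frac{N-2}{2}}K_\delta$ combined with Proposition \ref{prop-non-degeneracy}, once one knows $K_\delta(S)=\{0\}$ for $\delta<0$) then forces it to vanish identically. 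Equivalently, one may quote the $S$-nondegeneracy of $\Sigma_{m,n}$ established in \cite{agudelo2024jacobi} (Definitions 1.3--1.4 there) and invoke the Remark after Definition \ref{def:S_dila_nond} to upgrade it to $S$-dilation-nondegeneracy, provided $\bar\nu>2-N$, which we have checked.

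The main obstacle I anticipate is the precise identification of $\bar\nu$ with the \emph{specific} indicial root rather than merely "one of $-\frac{N-2}{2}\pm{\rm Re}(\Lambda_j)$": Lemma \ref{lemma-nu<0} only locates $\bar\nu$ in the countable set of indicial roots, and one must rule out the faster-decaying root $-\frac{N-2}{2}-\Lambda_0$ (and all higher-$j$ roots) in the strictly-stable cases. The clean way to do this is the sign argument for $\zeta_0$ sketched above: because $\Sigma^\pm$ (resp.\ $\Sigma^\pm_{m,n}$) lies entirely in one component $E^\pm$ of the complement of the cone and the Jacobi field $y\cdot\nu_\Sigma$ is known to be of one sign (this is exactly the non-vanishing statement quoted in Theorem \ref{th-Al}, and for Theorem \ref{th-Sigma} it follows from area-minimisation by a standard first-variation/foliation argument), the leading term in the expansion \eqref{as-zeta} must involve a positive eigenfunction, hence $\varphi_0$, hence the exponent paired with $\Lambda_0$; and a positive solution of the radial Jacobi ODE with this spherical profile decays at the \emph{slower} rate $-\frac{N-2}{2}+\Lambda_0$ (the faster solution $r^{-\frac{N-2}{2}-\Lambda_0}$ being the one that is not positive all the way to the end, or being excluded by the normalisation ${\rm dist}(\Sigma,0)=1$). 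Handling the non-strictly-minimising borderline case ($\Lambda_0=0$, logarithmic correction) requires a little extra care and is where I would lean most heavily on Remark $3.2$ of \cite{agudelo2024jacobi}.
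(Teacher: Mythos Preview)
The overall structure matches the paper: for parts (1) and (2) you correctly invoke Proposition~\ref{prop-dil-nondeg} (stability plus non-intersection with the cone) to obtain dilation-nondegeneracy, and for part (3) you correctly propose quoting the $S$-nondegeneracy from \cite{agudelo2024jacobi} and using the Remark after Definition~\ref{def:S_dila_nond}. The paper's proof is shorter, delegating the identification of $\bar\nu$ entirely to external references (Theorem~3.2 of \cite{HS} for part (1), Remark~5.2 of \cite{agudelo2024jacobi} for part (2), Theorem~5.2 of \cite{agudelo2024jacobi} for part (3)).

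There are, however, two genuine errors in your reasoning. In part (1) you conflate ``strictly minimising'' with ``strictly stable''. These are distinct notions: a minimising cone in $\R^{N+1}$, $N\ge 7$, is always strictly stable, so $\Lambda_0>0$ in \emph{both} subcases of part (1), as the statement asserts. The dichotomy strictly/non-strictly minimising does not correspond to $\Lambda_0>0$ versus $\Lambda_0=0$; it corresponds to whether $\zeta_0$ decays at the slower or the faster of the two \emph{distinct} indicial rates $-\frac{N-2}{2}\pm\Lambda_0$, and that is exactly what Theorem~3.2 of \cite{HS} supplies. Your sign argument does correctly force the spherical profile to be $\varphi_0$, but it cannot by itself select between the two positive radial solutions $r^{-\frac{N-2}{2}\pm\Lambda_0}$: the non-strictly-minimising case is precisely the one in which the coefficient of the slower-decaying solution vanishes, and this is not detectable from positivity alone.

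In part (3) the Lawson cone $C_{m,n}$ with $4\le m+n\le 7$ is \emph{unstable}, not ``stable but not strictly stable''. Since $\lambda_0=-(N-1)$, one has $\Lambda_0^2=\left(\tfrac{N-2}{2}\right)^2-(N-1)<0$ for $3\le N\le 6$, so $\Lambda_0$ is purely imaginary and ${\rm Re}(\Lambda_0)=0$ for that reason, not because $\Lambda_0^2=0$. Your conclusion $\bar\nu=-\frac{N-2}{2}$ happens to be correct, but the stability-based justification you give for it is not; the paper obtains it from Theorem~5.2 of \cite{agudelo2024jacobi}.
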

\begin{proof}
\begin{enumerate}
\item The dilation-nondegeneracy follows from Proposition \ref{prop-dil-nondeg}, since such hypersurfaces are stable and do not intersect their asymptotic cone $C$. The statement about $\bar{\nu}$ follows from Theorem $3.2$ of \cite{HS}.
\item The dilation-nondegeneracy follows once again from Proposition \ref{prop-dil-nondeg}, exactly as above. For the computation of $\bar{\nu}$ we refer to Remark $5.2$ of \cite{agudelo2024jacobi}.
\item Regarding the hypersurfaces constructed in Theorem \ref{th-Sigma-low-dim}, the result follows from the fact that $\Lambda_0^2=\left(\frac{N-2}{2}\right)^2-(N-1)<0$, since $3\le N\le 6$, and Theorem $5.2$ of \cite{agudelo2024jacobi}.
\end{enumerate}
\end{proof}
\begin{remark}
Note that Lemma \ref{lemma-nondeg-examples} implies Theorem \ref{theo:examples}.
\end{remark}
\color{black}Consider again a cone $C$ and a hypersurface $\Sigma$ satisfying (H1)-(H4) and such that $\Sigma$ does not intersect $C$. It is natural to ask about the properties of the Jacobi fields of $\Sigma$ when we allow them to decay more slowly (or possibly explode) at infinity. We will see (see Proposition \ref{prop-jac-dil} below) that, for $\nu\in(\bar{\nu},-\frac{N-2}{2}+\Lambda_1)$, the Jacobi fields in the space $D_\nu(\Sigma)$ are geometric. More precisely, $D_\nu(\Sigma)$ is either trivial or generated by the Jacobi field $\zeta_0$ corresponding to dilations.\\

For this purpose, we fix $R>0$ (large enough) and look at the equation $$J_\Sigma\phi=0\quad\text{in }\Sigma_R.$$ 
Introducing the change of variables $\phi(y)=|y|^{-\frac{N-2}{2}}v(y)$ (see \eqref{eq-SigmaR}), the equation for $v$ reads 
$$\mathcal{L}v=0\qquad\text{in }\Sigma_R.$$
We write the points $y\in\Sigma_R$ as 
\begin{equation}\label{eq:coord_asympt_cone}
y=r\theta+w(r,\theta)\nu_\Sigma(\theta),\qquad r>R.    
\end{equation}

With an Emden-Fowler change of variables $r=e^t$ and $\tilde{v}(t,\theta)=v(y)$, the equation for $\tilde{v}(t,\theta)$ reads
\begin{equation}\label{eq:Jac_Emd_Fowl_remainder}
\partial_{tt}\tilde{v}+\Delta_\Gamma\tilde{v}+\left(|A_\Gamma|^2-\left(\frac{N-2}{2}\right)^2\right)\tilde{v}+\mathcal{R}(\tilde{v})=0\qquad\text{in }(\log R,\infty)\times\Gamma,
\end{equation}
where $\mathcal{R}(\tilde{v})$ is defined in \eqref{eq-SigmaR} and satisfies \eqref{est-remainder}. Recalling that $R>0$ is large, then $t_0:=\log(R)$ is large as well and hence a function $\tilde{v}\in C^2((t_0,\infty)\times \Gamma)$ solving the equation
\begin{equation}
\label{eq-approx-SigmaR}
\partial_{tt}\tilde{v}+\Delta_\Gamma\tilde{v}+\left(|A_\Gamma|^2-\left(\frac{N-2}{2}\right)^2\right)\tilde{v}=0\qquad\text{in }(\log R,\infty)\times\Gamma
\end{equation}
is a {\it good approximation} of a solution of the equation \eqref{eq:Jac_Emd_Fowl_remainder}.

\medskip
Next, for $j\in \mathbb{N}$, let  $E_j$ denote the eigenspace of $-J_\Gamma$ relative to $\lambda_j$ (see \eqref{ineq:eigval_Gamma}). Then, for any $j\ge 0$ and $\phi\in E_j$, the function $U^\pm_{j,\phi}(t,\theta):=e^{\pm\Lambda_jt}\phi(\theta)$ is a smooth solution to (\ref{eq-approx-SigmaR}), where $\Lambda_j$ is defined in \eqref{def:discrim}.

\vskip 3pt
Also, for $j\in \mathbb{N}$ and for $y\in \Sigma_R$, set $u^{\pm}_{j,\phi}(y):=U_{j,\phi}^{\pm}(t,\theta)$, where $y$ is written as in \eqref{eq:coord_asympt_cone} and $r=e^t$ for $t>t_0$.

\begin{lemma}
Let $\eta>0$ be as in \eqref{est-remainder}. For $R:=e^{t_0}$ large enough and fixed, and for any $j\ge 0$ and any $\phi\in E_j$, there exists two solutions $w^\pm_{j,\phi}$ to
$$\mathcal{L}w^\pm_{j,\phi}=0\qquad\text{in }\Sigma_R$$
such that for any $\delta<\pm{\rm Re}(\Lambda_j)-\eta$, $w^\pm_{j,\phi}-u^\pm_{j,\phi}\in \Gamma_\delta L^2(\Sigma_R,|y|^{-N})$. Moreover, the mapping $$\phi\mapsto w^\pm_{j,\phi}$$
is linear.
\end{lemma}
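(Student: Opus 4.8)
The plan is to construct $w^\pm_{j,\phi}$ as a perturbation of the explicit approximate solution $u^\pm_{j,\phi}$ by solving a fixed-point problem for the remainder $\psi:=w^\pm_{j,\phi}-u^\pm_{j,\phi}$. Since $\mathcal{L}u^\pm_{j,\phi}=\mathcal{R}(\tilde u^\pm_{j,\phi})=:h^\pm_{j,\phi}$ on $\Sigma_R$ (because $u^\pm_{j,\phi}$ solves the model equation \eqref{eq-approx-SigmaR} exactly), the equation $\mathcal{L}w^\pm_{j,\phi}=0$ becomes $\mathcal{L}\psi=-h^\pm_{j,\phi}-\mathcal{R}(\tilde\psi)$ on $\Sigma_R$. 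The estimate \eqref{est-remainder} gives $|\mathcal{R}(\tilde u^\pm_{j,\phi})|\le c e^{-\eta t}e^{\pm{\rm Re}(\Lambda_j)t}(\text{polynomial factors from }\phi)$, so that $h^\pm_{j,\phi}\in\Gamma_{\delta}L^2(\Sigma_R,|y|^{-N})$ for every $\delta<\pm{\rm Re}(\Lambda_j)-\eta$ (translating the Emden--Fowler decay $e^{\delta t}$ into the weight $|y|^{\delta}\sim\Gamma_\delta$, using \eqref{eqn:asympt_metric_Sigma_EmdenFowler} for the volume element).

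The key analytic input is a right inverse for $\mathcal{L}$ on the end $\Sigma_R$ with the weight $\Gamma_\delta$, for $\delta$ not an indicial root. This is precisely the content of the one-ended model analysis in \cite{P} (Chapter 8, in particular Propositions 8.3.1 and 8.4.1): for $R$ large enough and $\delta\notin\{\pm{\rm Re}(\Lambda_j)\}_{j\ge 0}$ there is a bounded linear operator $G_\delta$ with $\mathcal{L}G_\delta f=f$ and $\|G_\delta f\|_{\Gamma_\delta W^{2,2}(\Sigma_R,|y|^{-N})}\le c\|f\|_{\Gamma_\delta L^2(\Sigma_R,|y|^{-N})}$, since the model operator on the cylinder $(t_0,\infty)\times\Gamma$ is $\partial_{tt}+\Delta_\Gamma+|A_\Gamma|^2-(\tfrac{N-2}{2})^2$ whose indicial roots are exactly $\pm{\rm Re}(\Lambda_j)$, and $\mathcal{L}$ differs from it by the exponentially decaying $\mathcal{R}$. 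I would then rewrite the problem as the fixed-point equation
\begin{equation}\notag
\psi=T(\psi):=-G_\delta\big(h^\pm_{j,\phi}+\mathcal{R}(\tilde\psi)\big)
\end{equation}
in the Banach space $\Gamma_\delta W^{2,2}(\Sigma_R,|y|^{-N})$ (with $\delta$ chosen in the interval $(\delta_0,\pm{\rm Re}(\Lambda_j)-\eta)$ avoiding indicial roots, for a suitable lower bound $\delta_0$). The estimate \eqref{est-remainder} shows that $\tilde\psi\mapsto\mathcal{R}(\tilde\psi)$ gains a factor $e^{-\eta t}$, hence maps $\Gamma_\delta W^{2,2}$ into $\Gamma_{\delta-\eta}L^2$ with small norm once $R=e^{t_0}$ is large (the smallness constant is $c\,e^{-\eta t_0}$); composing with $G_\delta$ gives that $T$ is a contraction on a small ball, so the fixed point $\psi$ exists, is unique in that ball, and satisfies $\psi\in\Gamma_\delta L^2(\Sigma_R,|y|^{-N})$ for every admissible $\delta<\pm{\rm Re}(\Lambda_j)-\eta$. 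Setting $w^\pm_{j,\phi}:=u^\pm_{j,\phi}+\psi$ gives the desired solution.

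Linearity of $\phi\mapsto w^\pm_{j,\phi}$ I would get as follows: $\phi\mapsto u^\pm_{j,\phi}$ is linear by construction, and $\phi\mapsto h^\pm_{j,\phi}=\mathcal{R}(\tilde u^\pm_{j,\phi})$ is linear because $\mathcal{R}$ is a linear differential operator in $\tilde v$ (it is the collection of lower-order metric-perturbation terms in \eqref{eq-SigmaR}). Since the fixed point is unique in the small ball and the map $T$ depends linearly on the data $h^\pm_{j,\phi}$ through $G_\delta$ and through the linear operator $\mathcal{R}$, uniqueness forces $\psi_{\phi_1+\phi_2}=\psi_{\phi_1}+\psi_{\phi_2}$ and $\psi_{c\phi}=c\,\psi_\phi$ (one checks that $\psi_{\phi_1}+\psi_{\phi_2}$ solves the fixed-point equation with data $\phi_1+\phi_2$ and lies in the appropriate ball after rescaling the radius, using that $\psi\mapsto G_\delta\mathcal{R}(\tilde\psi)$ is linear), hence the claim.

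The main obstacle I anticipate is bookkeeping the weights carefully: one must ensure the interval of admissible $\delta$ is nonempty, i.e.\ that there is a gap between the indicial roots below $\pm{\rm Re}(\Lambda_j)-\eta$ and the value $\pm{\rm Re}(\Lambda_j)-\eta$ itself — this requires $\eta$ small enough (which is exactly how \eqref{est-remainder} is set up), and one must avoid landing on an indicial root, which is why the surjectivity/right-inverse statement in \cite{P} is quoted only for $\delta\notin\{\pm{\rm Re}(\Lambda_j)\}$. A secondary technical point is verifying that $u^\pm_{j,\phi}$ is genuinely smooth and the identification $y\leftrightarrow(t,\theta)$ via \eqref{eq:coord_asympt_cone} is a diffeomorphism of $\Sigma_R$ onto $(t_0,\infty)\times\Gamma$ with uniformly controlled metric, so that the cylindrical estimates of \cite{P} transfer verbatim; this is already implicit in \eqref{eqn:asympt_metric_Sigma_EmdenFowler} and in the argument of Lemma \ref{lemma:adjoint_oper}.
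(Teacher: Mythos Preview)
Your proposal is correct and follows the same route as the paper, which simply states ``Enough to use \eqref{est-remainder} and argue exactly as in Lemma $11.1.3$ of \cite{P}''; what you have written is precisely a detailed unpacking of that cited perturbation/fixed-point argument.
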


\begin{proof} Enough to use \eqref{est-remainder} and argue exactly as in Lemma $11.1.3$ of \cite{P}.
\end{proof}

For $j\geq 0$ and $\phi \in E_j$, write $W_{j,\phi}^{\pm}(t,\theta):=w^{\pm}_{j,\phi}(y)$, where $y$ is written as in \eqref{eq:coord_asympt_cone} and $r=e^t$ for $t>t_0$.

\vskip 3pt
Next, let $\chi:\R\to\R$ be a smooth cutoff function such that $\chi=0$ in $(-\infty,\log(R+1))$ and $\chi=1$ in $(\log(R+2),\infty)$. Following \cite{P}, for $\delta<0$ we introduce the \textit{Deficiency space}
\begin{equation}
\mathcal{D}_\delta:={\rm span}\big\{\chi (t) W^\pm_{j,\phi}(t,\theta):\,\phi\in E_j\quad \hbox{and}\quad{\rm Re}(\Lambda_j)<-\delta, \quad j\ge 0 \big\}.
\end{equation}
{\begin{proposition}\label{prop-jac-dil}
Under the assumptions of Proposition \ref{prop-dil-nondeg}, we have $$D_\nu(\Sigma)\subset{\rm span}\{\zeta_0\}\qquad\forall\,\nu<-\frac{N-2}{2}+\Lambda_1.$$
\end{proposition}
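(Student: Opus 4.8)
The plan is to combine the linear Fredholm theory from Section 2 with a Fredholm index (or dimension-counting) argument across the weight $\delta$, together with the asymptotic expansion of solutions to $\mathcal{L}v=0$ near infinity, exactly as in the ``deficiency space'' framework of \cite{P}. Set $\delta := \frac{N-2}{2}+\nu$, so the hypothesis $\bar{\nu}<\nu<-\frac{N-2}{2}+\Lambda_1$ becomes $\bar{\delta}<\delta<\Lambda_1$, where $\bar{\delta} = \frac{N-2}{2}+\bar{\nu}\in\{\pm{\rm Re}(\Lambda_j)\}$ by Lemma \ref{lemma-nu<0}; since $\Sigma$ does not intersect $C$ and is stable, Proposition \ref{prop-dil-nondeg} and Lemma \ref{lemma-nondeg-examples} give $\bar{\delta}=\Lambda_0>0$ (or $=-\Lambda_0$), and in all cases the only indicial root in $(-\Lambda_1,\Lambda_1)$ other than $\pm\Lambda_0$ is none, so $\delta$ avoids the forbidden set. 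Via $\phi=|y|^{-\frac{N-2}{2}}v$ it suffices to show that any $v\in C^2(\Sigma)$ solving $\mathcal{L}v=0$ with $|v(y)|\le c(1+|y|)^\delta$ lies in the span of $v_0:=|y|^{\frac{N-2}{2}}\zeta_0$.

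First I would record that such a $v$ automatically belongs to $\mathcal{W}^{2,2}_{\delta'}(\Sigma,|y|^{-N})$ for every $\delta'>\delta$: the pointwise bound, elliptic estimates on geodesic balls, and the weighted-Sobolev structure give this (this is essentially the converse direction to Lemma 3.4 of \cite{agudelo2024jacobi}). Next, the key structural input is the asymptotic analysis near infinity: writing $v$ in Emden--Fowler coordinates and using \eqref{eq:Jac_Emd_Fowl_remainder}--\eqref{est-remainder}, a solution of $\mathcal{L}v=0$ that is $O(r^\delta)$ with $\delta<\Lambda_1$ decomposes, modulo a term in $\Gamma_{\delta_-}L^2$ for some $\delta_-<\delta$, as a combination of the model solutions $W^\pm_{0,\phi}$ with ${\rm Re}(\Lambda_0)<\delta$ — i.e. $v$ differs from an element of the deficiency space $\mathcal{D}_{-\delta}$ by something that decays strictly faster than any indicial rate below $\delta$. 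Concretely, $v - \chi\,W$ for a suitable $W\in\mathcal{D}_{-\delta}$ lies in $\mathcal{W}^{2,2}_{\delta_-}(\Sigma,|y|^{-N})$ with $\delta_-<\bar\delta$, hence by Proposition \ref{prop-non-degeneracy} (or rather its proof: $\mathcal{L}$ is injective on $\mathcal{W}^{2,2}_{\delta_-}$ since $K_{\delta_-}=\{0\}$ for $\delta_-<\bar\delta$, using stability and Proposition 3.1 of \cite{agudelo2024jacobi}) that fast-decaying part must be $0$. Therefore $v=\chi W + O(r^{\delta_-})$-decaying-piece with the decaying piece identically zero, so $v$ equals (up to a compactly supported correction coming from the cutoff, absorbed by injectivity again) a genuine global solution asymptotic to an element of $\mathcal{D}_{-\delta}$.

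It then remains to identify which model solutions actually extend to global solutions of $\mathcal{L}v=0$ on all of $\Sigma$ and match the allowed growth: the deficiency space below weight $\delta<\Lambda_1$ is spanned by $W^\pm_{0,\phi}$ with $\phi\in E_0$ and $\pm{\rm Re}(\Lambda_0)<\delta$. Since $C$ is stable and embedded one-sidedly (no intersection with $C$), $\lambda_0$ is simple with $E_0$ one-dimensional spanned by a positive $\varphi_0$, so the deficiency space is at most two-dimensional (the $+$ and $-$ roots $-\frac{N-2}{2}\pm\Lambda_0$), but only those model solutions with exponent $<\delta<\Lambda_1$ in the ``$v$''-variable are present, and among the resulting candidate global Jacobi fields $\phi=|y|^{-\frac{N-2}{2}}v$ the only one that is actually globally defined and bounded is $\zeta_0$ itself — the other candidate would be a Jacobi field decaying faster than $\zeta_0$, contradicting the definition \eqref{def:rate_decay_zeta0} of $\bar\nu$ as the infimum decay rate, or would grow and fail to be geometric, contradicting Theorem \ref{th_jacobi-geom}. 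Hence $D_\nu(\Sigma)\subset{\rm span}\{\zeta_0\}$. The main obstacle is the second step: rigorously passing from the pointwise bound on $v$ to membership in $\mathcal{W}^{2,2}_{\delta'}$ and then to the precise asymptotic decomposition modulo a strictly-faster-decaying remainder — this is where one leans hardest on the machinery of \cite{P} (Propositions 8.3.1, 8.4.1, 11.1.3--11.1.4, 12.x) and must check that the remainder $\mathcal{R}(\tilde v)$ in \eqref{est-remainder} is indeed small enough ($\eta>0$) to not produce spurious indicial behavior in the window $(\bar\delta,\Lambda_1)$.
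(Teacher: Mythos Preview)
The paper takes a much shorter route. Instead of decomposing each solution into model asymptotics and then trying to identify which deficiency-space elements extend globally, it invokes the Fredholm dimension formula from \cite{P} (Corollary~11.3.1) directly: for $\delta\in(\Lambda_0,\Lambda_1)$ one has $\dim K_\delta=\tfrac{1}{2}\dim\mathcal{D}_{-\delta}=1$, since $\lambda_0$ is simple. As $|y|^{\frac{N-2}{2}}\zeta_0$ is a nonzero element of $K_\delta$, it must span it, and $D_\nu(\Sigma)\subset|y|^{-\frac{N-2}{2}}K_\delta=\mathrm{span}\{\zeta_0\}$. Monotonicity of $K_\delta$ in $\delta$ reduces everything to this single window. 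The identification step you struggle with is thus replaced by pure dimension counting.

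Your argument has a genuine gap in the last paragraph. You claim that ``the other candidate would be a Jacobi field decaying faster than $\zeta_0$, contradicting the definition \eqref{def:rate_decay_zeta0} of $\bar\nu$''. But $\bar\nu$ is by definition the decay exponent of $\zeta_0$ \emph{alone}; it says nothing about the existence of other Jacobi fields with faster decay. What rules those out is dilation-nondegeneracy (Proposition~\ref{prop-dil-nondeg}), which you correctly invoked earlier but do not use here. A correct version of your approach would read: given $v_1,v_2\in K_\delta$ with leading asymptotics $a_iW^+_{0,\varphi_0}$, the combination $a_2v_1-a_1v_2$ lies in $K_{\delta'}$ for some $\delta'\in(-\Lambda_0,\Lambda_0)\subset(-\infty,\bar\delta)$, hence vanishes by $K_{\delta'}=\{0\}$. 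You never carry this out. Separately, your intermediate claim that ``the fast-decaying part $v-\chi W$ must be $0$ by injectivity'' is not valid: $v-\chi W$ is not in $\ker\mathcal{L}$, since the cutoff $\chi$ produces a compactly supported right-hand side. That step is both incorrect and unnecessary --- the asymptotic expansion already tells you what you need without forcing the remainder to vanish.
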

\begin{proof}
Set $\nu:=-\frac{N-2}{2}+\delta$. Using that $K_{\delta'}\subset K_\delta$ if $\delta'<\delta$, it is enough to prove the statement for $\delta\in(\Lambda_0,\Lambda_1)$.\\

Arguing as in Corollary $11.3.1$ of \cite{P} and using the fact that the first eigenvalue $\lambda_0$ of 
$-J_\Sigma$ is simple, for $\delta\in(\Lambda_0,\Lambda_1)$ we have
$$\dim K_{\delta}=\frac{1}{2}\dim \mathcal{D}_{-\delta}=1.
$$
Since $v:=|y|^{\frac{N-2}{2}}\zeta_0\in K_{\delta}$ and $\delta\in(\Lambda_0,\Lambda_1)$, then $K_{\delta}={\rm span}\{|y|^{\frac{N-2}{2}}\zeta_0\}$, which yields that
$$
D_\nu(\Sigma)\subset|y|^{-\frac{N-2}{2}}K_{\delta}={\rm span}\{\zeta_0\},
$$
which concludes the proof.
%Since $\nu<\bar{\nu}=-\frac{N-2}{2}+\bar{\delta}<-\frac{N-2}{2}+\Lambda_1$, we conclude that  $D_\nu(\Sigma)=\{0\}$,
\end{proof}
\begin{remark}
Proposition \ref{prop-jac-dil} shows that, for $\nu<-\frac{N-2}{2}+\Lambda_1$, the Jacobi fields in $D_\nu(\Sigma)$ are geometric, or in other words $D_\nu(\Sigma)\subset G(\Sigma)$.
\end{remark}}

%\begin{remark}
%\label{rem-bar-nu}
%Let $C\subset \R^{N+1}$ be a stable minimal cone and let $\Sigma$ be a minimal hypersurface asymptotic to $C$ at infinity, which does not intersect $C$. Then, due to Remark $3.2$ of \cite{agudelo2024jacobi}, it satisfies $\bar{\nu}\in\{-\frac{N-2}{2}\pm\Lambda_0\}$.    
%\end{remark}

\subsection{The Jacobi equation with an explicit right-hand side}

From now on, $\Sigma$ will be one of the minimal hypersurfaces constructed either in Theorem \ref{th-Al} or Theorem \ref{th-Sigma-low-dim}. In this part we consider the $S=O(m)\times O(n)$ as the group of symmetries and we stress that all these hypersurfaces are thus $S$-invariant.\\

Set $\Gamma=\sqrt{\frac{m-1}{N-1}}\mS^{m-1}\times \sqrt{\frac{n-1}{N-1}}\mS^{n-1}$, for $m,\,n\ge 2$, so that the eigenvalues of $-J_\Gamma$ satisfy
$$
-(N-1)=\lambda_0<0=\lambda_1\le\lambda_2\le\cdots.$$

Assume first that $m+n=N+1\ge 8$ and let $\Sigma=\Sigma^\pm_{m,n}$ be one of the hypersurfaces constructed in Theorem \ref{th-Al}. Then,
$$\bar{\delta}={\rm Re}(\Lambda_0)=\Lambda_0=\sqrt{\left(\frac{N-2}{2}\right)^2-(N-1)}>0.
$$

Thus, $\bar{\nu}:=-\frac{N-2}{2}+\Lambda_0$ and 
$$
-\frac{N-2}{2}-\Lambda_0<\bar{\nu}<-1<-\frac{N-2}{2}+\Lambda_1=0.
$$

In particular, $-1 \in \big(\bar{\nu},-\frac{N-2}{2}+\Lambda_1\big)$ is not an indicial root and since ${\rm tr} A_\Sigma^3\in C^{0,\alpha}_{\nu}(\Sigma,S)$ for any $\nu\ge -3$, equation \eqref{eq-trA3} is solvable in $C^{2,\alpha}_{-1}(\Sigma,S)$, thanks to Theorem \ref{prop-right-inverse-Jacobi}. Thus, we recover the result of Proposition 2.2 in  \cite{agudelo2022doubling}.\\

On the other hand, if $4\le N+1\le 7$ and $\Sigma=\Sigma_{m,n}$ is one of the hypersurfaces constructed in Thoerem \ref{th-Sigma-low-dim}, we have $\bar{\delta}={\rm Re}(\Lambda_0)=0$ and $\Lambda_1=\frac{N-2}{2}$. Hence, for $N=5,6$ we have
$$-\frac{N-2}{2}\pm{\rm Re}(\Lambda_0)=-\frac{N-2}{2}<-1<-\frac{N-2}{2}+\Lambda_1=0,$$
and proceeding as above, we find that equation \eqref{eq-trA3} is solvable in $C^{0,\alpha}_{-1}(\Sigma,S)$ thanks to Theorem \ref{prop-right-inverse-Jacobi}.\\

For $N=4$, we apply Theorem  \ref{prop-right-inverse-Jacobi} for $\nu$ satisfying
$$-\frac{N-2}{2}\pm{\rm Re}(\Lambda_0)=-\frac{N-2}{2}=-1<\nu<-\frac{N-2}{2}+\Lambda_1=0,$$
but not for $\nu=-1$, since it corresponds to the indicial root ${\rm Re}(\Lambda_0)=0$. This means that equation \eqref{eq-trA3} is still solvable, but the solution decays slightly slower than $|y|^{-1}$ at infinity. More precisely, for any $\nu\in(-1,0)$, we can find a solution $\phi\in C^{2,\alpha}_\nu(\Sigma,S)$.\\

For $N=3$, we apply Theorem \ref{prop-right-inverse-Jacobi} for $\nu$ satisfying
$$-\frac{N-2}{2}\pm{\rm Re}(\Lambda_0)=-\frac{N-2}{2}=-\frac{1}{2}<\nu<-\frac{N-2}{2}+\Lambda_1=0,$$
This means that we can find a solution to equation \eqref{eq-trA3} whose decay at infinity is slightly slower than $|y|^{-\frac{1}{2}}$. More precisely, for any $\nu\in(-\frac{1}{2},0)$, we can find a solution $\phi\in C^{2,\alpha}_\nu(\Sigma,S)$.

\subsection{An example of dilation degenerate hypersurface}

In order to construct a dilation degenerate minimal hypersurface, we prove the following Proposition.
\begin{proposition}\label{prop-Plateau}
For any $R>0$, there exists $\alpha=\alpha(R)>0$ such that the Plateau problem
\begin{equation}\label{Plateau-pb}
\begin{aligned}
{\rm div}\left(\frac{\nabla u}{\sqrt{1+|\nabla u|^2}}\right)&=0\qquad\text{in}\,\R^N\setminus\overline{B}_R  \\
u&=\alpha(R) \qquad\text{on}\,\partial B_R
\end{aligned}
\end{equation}
has a positive radial nonconstant solution $u_R\in C^\infty(\R^N\setminus\overline{B}_R)$ such that. More precisely, there exists $v_R\in C^\infty(R,\infty)$ such that $u_R(x)=v_R(|x|)>0$ and
\begin{equation}\label{est-sol-plateau}
v_R(r)=cr^{2-N}(1+o(1)),\qquad v_R'(r)=-c(N-2)r^{1-N}(1+o(1))\qquad\text{as}\,r\to\infty
\end{equation}
for some constant $c>0$.
\end{proposition}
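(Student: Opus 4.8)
The plan is to realize $u_R$ as a higher--dimensional exterior catenoid, i.e. to look for a radial solution $u(x)=v(|x|)$. Setting $r=|x|$, the minimal surface equation in \eqref{Plateau-pb} is equivalent to
\[
\frac{d}{dr}\!\left(\frac{r^{N-1}v'(r)}{\sqrt{1+v'(r)^2}}\right)=0\qquad\text{on }(R,\infty),
\]
so that $r^{N-1}v'(r)\big(1+v'(r)^2\big)^{-1/2}$ is a constant, which (since we want $v$ decreasing) we write as $-\kappa$ with $\kappa>0$. I would fix any $\kappa\in(0,R^{N-1})$, say $\kappa:=\tfrac12 R^{N-1}$, and solve the last relation algebraically for $v'$ on the branch $v'<0$; this gives
\[
v'(r)=-\frac{\kappa}{\sqrt{r^{2(N-1)}-\kappa^2}}.
\]
For $r\ge R$ one has $r^{2(N-1)}-\kappa^2\ge R^{2(N-1)}-\kappa^2>0$, so this expression is smooth and strictly negative on $[R,\infty)$; the strict inequality $\kappa<R^{N-1}$ is precisely what excludes a vertical tangent at $\partial B_R$ and makes the resulting graph a genuine solution up to the boundary.

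Next I would integrate. Since $N\ge3$, the above right--hand side is $O(s^{1-N})$ as $s\to\infty$, hence integrable at infinity, and one can set
\[
v_R(r):=\int_r^\infty\frac{\kappa\,ds}{\sqrt{s^{2(N-1)}-\kappa^2}},\qquad r\ge R.
\]
Then $v_R\in C^\infty([R,\infty))$, $v_R>0$ and $v_R'<0$ (so $v_R$ is nonconstant), $v_R(r)\to0$ as $r\to\infty$, and $v_R$ satisfies the radial minimal surface equation by construction. Putting $\alpha(R):=v_R(R)>0$ and $u_R(x):=v_R(|x|)$, the pair $(u_R,\alpha(R))$ solves \eqref{Plateau-pb} and $u_R\in C^\infty(\R^N\setminus\overline{B}_R)$, which already gives the existence assertion.

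Finally, to obtain \eqref{est-sol-plateau} I would plug the expansion $\big(s^{2(N-1)}-\kappa^2\big)^{-1/2}=s^{1-N}\big(1+O(s^{-2(N-1)})\big)$ into the integral defining $v_R$, getting $v_R(r)=\tfrac{\kappa}{N-2}\,r^{2-N}\bigl(1+o(1)\bigr)$ as $r\to\infty$, and apply the same expansion directly to the formula for $v_R'$ to get $v_R'(r)=-\kappa\,r^{1-N}(1+o(1))$; then \eqref{est-sol-plateau} holds with $c:=\kappa/(N-2)>0$. I do not expect any real obstacle in this argument --- it is an explicit ODE integration --- the only steps that require a line of care being the convergence of the integral defining $v_R$ (which is where $N\ge3$ is used) and the smoothness of $v_R$ up to $r=R$ (which is where $\kappa<R^{N-1}$ is used).
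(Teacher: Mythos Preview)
Your argument is correct and follows the same route as the paper: reduce to the radial ODE, integrate the first integral $r^{N-1}v'(1+(v')^2)^{-1/2}=\mathrm{const}$ explicitly, define $v_R$ as the tail integral, and read off the asymptotics. The only difference is the value of the constant: the paper takes the \emph{critical} choice $c=R^{N-1}$, so that $v_R'(r)\to-\infty$ as $r\to R^+$ (vertical tangent at the boundary), whereas you take a subcritical $\kappa<R^{N-1}$, which keeps $v_R$ smooth up to and including $r=R$. For the statement of the Proposition as written (smoothness only on the open exterior $\R^N\setminus\overline{B}_R$), either choice works and the asymptotics \eqref{est-sol-plateau} are identical. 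The paper's critical choice is, however, what produces the vertical tangent recorded in the Remark immediately after the Proposition and is the natural one if one wants the resulting graph to close up into a complete hypersurface (the higher--dimensional catenoid); your subcritical graph has a genuine boundary circle at $|x|=R$.
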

\begin{proof}
If we look for a radial solution $u(x)=v(|x|)$ to \eqref{Plateau-pb}, $v$ has to satisfy equation 
\begin{equation}\label{Plateau-pb-v}
\frac{1}{r^{N-1}}\left(r^{N-1}\frac{v'}{\sqrt{1+(v')^2}}\right)'=0\qquad\forall\,r>R,
\end{equation}
or equivalently 
\begin{equation}
\label{eq-v'}
r^{N-1}\frac{v'}{\sqrt{1+(v')^2}}=\kappa\qquad\forall\,r>R,    
\end{equation}
for some constant $\kappa\in\R$. If $\kappa=0$ we have the constant solution, hence we are interested in the case $\kappa\ne 0$. Since we are looking for a positive solution, we take $\kappa=-c<0$. As a consequence \eqref{eq-v'} becomes 
$$\frac{v'}{\sqrt{1+(v')^2}}=-\frac{c}{r^{N-1}}\qquad\forall\,r>R,$$
or equivalently
\begin{equation}
v'(r)=-\frac{cr^{1-N}}{\sqrt{1-((cr)^{1-N})^2}}\qquad\forall r>R   
\label{eq-v'-inv}
\end{equation}
provided $c$ is such that $cR^{1-N}=1$, that is $c=R^{N-1}$. As a consequence, by the fundamental Theorem of calculus, the function
$$v(r):=\int_r^{\infty}\frac{\rho^{1-N}}{\sqrt{R^{2-2N}-\rho^{2-2N}}} \,d\rho>0,\qquad\forall\,r>R$$
is smooth in $(R,\infty)$ and fulfills \eqref{Plateau-pb-v}. Hence the smooth function $u(x):=v(|x|)$ is a solution to the Plateau problem \eqref{Plateau-pb} provided we set $$\alpha(R):=\int_R^{\infty}\frac{\rho^{1-N}}{\sqrt{R^{2-2N}-\rho^{2-2N}}} \,d\rho\in(0,\infty).$$
The asymptotic behaviour of $v'$ as $r\to\infty$ is directly given by \eqref{eq-v'-inv} and the one of $v$ follows from the de l'Hopital Theorem.
\end{proof}
\begin{remark}
From the proof of Proposition \ref{prop-Plateau}, it follows that the solution to the Plateau problem constructed in Proposition \ref{prop-Plateau} fulfills $v'(r)<0$ for any $r>R$ and $v'(r)\to-\infty$ as $r\to R^+$.
\end{remark}
\begin{proof}
Let us consider, for any $R>0$, the minimal graph constructed in Proposition \ref{prop-Plateau}, that is the hypersurface $$\Sigma_R:=\{(x,u_R(x)):\,x\in\R^N\setminus\overline{B}_R\}.$$
Note that $\Sigma_R$ is asymptotic to $\R^{N}\times\{0\}$ in the sense of (H3) with $K_R=\emptyset$.\\

The Jacobi field coming from dilations satisfies
\begin{equation}
\begin{aligned}
\zeta_0(y)&=\langle (x,u_R(x)),\frac{(-\nabla u_R(x),1)}{\sqrt{1+|\nabla u_R(x)|^2}}\rangle=\frac{-rv_R'(r)+v_R(r)}{\sqrt{1+(v'_R(r))^2}}\\
&=c(N-2)r^{2-N}(1+o(1))+cr^{2-N}(1+o(1))=c(N-1)r^{2-N}(1+o(1))\qquad r\to\infty.  
\end{aligned}    
\end{equation}
This shows that $\bar{\nu}=2-N$ if $N\ge 3$.
\end{proof}
\begin{remark}
Let $\Omega\subset\R^N$ be an open set. A function $u\in C^2(\Omega)$ represents a minimal graph if and only if
$${\rm div}\left(\frac{\nabla u}{\sqrt{1+|\nabla u|^2}}\right)=0\qquad\text{in}\,\Omega,$$
or equivalently
\begin{equation}\label{min-graph-equiv}
\Delta u=\frac{\nabla^2 u[\nabla u,\nabla u]}{1+|\nabla u|^2}\qquad\text{in}\,\Omega.   
\end{equation}
The solution $u_R(x)=v_R(|x|)$ constructed in Proposition \ref{prop-Plateau} satisfies $v''_R(r)>0$ for any $r>R$, hence, due to \eqref{min-graph-equiv}, $u_R>0$ is subharmonic in $\R^N\setminus\overline{B}_R$. Moreover, it satisfies $\lim_{|x|\to\infty}u_R(x)=0$. As a consequence, by Lemma $6.3$ of \cite{farina2016symmetry}, there exists a constant $C>0$ such that $$0<u_R(x)\le C |x|^{2-N}\qquad\forall\,x\in\R^N\setminus\overline{B}_R.$$
Therefore, the decay rate given by \eqref{est-sol-plateau} is sharp.
\end{remark}
\section{An explicit example with symmetry}

In this part we prove the following result.
\begin{theorem}
\label{th-sym-sharp}
Let $\Sigma\subset\R^{N+1}$ be one of the minimal hypersurfaces constructed in Theorem \ref{th-Al} or Theorem \ref{th-Sigma-low-dim}. Then there exists a solution $\phi$ to equation \eqref{eq-trA3} such that for any $y\in \Sigma$,
\begin{equation}
\label{est-phi-sharp}
\begin{aligned}
&|\phi(y)|(|y|+1)\le c\|g\|_{C^{0,\beta}_3(\Sigma)},\qquad &N\ge 5\\
&|\phi(y)|\frac{|y|+1}{\log(|y|+2)}\le c\|g\|_{C^{0,\beta}_3(\Sigma)},\qquad &N=4\\
&|\phi(y)|\frac{(|y|+1)^{1/2}}{\log(|y|+2)}\le c\|g\|_{C^{0,\beta}_3(\Sigma)},\qquad &N=3,
\end{aligned}
\end{equation}
where $g:={\rm tr}(A_\Sigma^3)$.
%Moreover, the decay rate given by \eqref{est-phi-sharp} is sharp.
\end{theorem}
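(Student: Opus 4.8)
The plan is to exploit the $O(m)\times O(n)$-symmetry to reduce equation \eqref{eq-trA3} to a second-order linear ODE on a half-line and then analyze the asymptotic behaviour of its solutions directly, obtaining sharper decay than the weighted-H\"older theory of Theorem \ref{prop-right-inverse-Jacobi} delivers. Concretely, since $\Sigma$ is $O(m)\times O(n)$-invariant, it can be parametrised by a single real parameter (arclength along a profile curve in the $(|x|,|y|)$-plane), so the Laplace--Beltrami operator $\Delta_\Sigma$ acting on invariant functions becomes an ODE operator $\partial_{ss} + h(s)\partial_s$ for an explicit weight $h$ coming from the induced metric and the orbit volume $(\text{const})\,|x|^{m-1}|y|^{n-1}$. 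Both $|A_\Sigma|^2$ and $g={\rm tr}(A_\Sigma^3)$ become explicit functions of $s$; in particular, using (H3) and the known asymptotics of the profile curves from Theorems \ref{th-Al} and \ref{th-Sigma-low-dim}, one has $|A_\Sigma|^2 = O(|y|^{-2})$ and $g = O(|y|^{-3})$ as $|y|\to\infty$, which after the Emden--Fowler change $r=e^t$ used already in \eqref{eq-SigmaR}--\eqref{eq-approx-SigmaR} turns the reduced equation into a perturbation of the constant-coefficient ODE $\tilde v'' - \big(\tfrac{N-2}{2}\big)^2 \tilde v + \lambda_0 \tilde v = $ (forcing), whose characteristic exponents are exactly $-\tfrac{N-2}{2}\pm{\rm Re}(\Lambda_0)$.

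The key steps, in order, are: (1) write down the reduced ODE for $\psi(s):=\phi$ along the profile, identify the weight $h$ and record the asymptotics of the coefficients and of $g$; (2) pass to the Emden--Fowler variable and split $\Sigma$ into a compact core $K_R$ and the asymptotic end $\Sigma_R$, exactly as in the proof of Lemma \ref{lemma:adjoint_oper}; (3) on $\Sigma_R$ solve the ODE by variation of parameters against the two model solutions $e^{\pm\Lambda_0 t}$ (when $\Lambda_0>0$, i.e. $N\ge 8$ in the $\Sigma^\pm_{m,n}$ case) or against $1$ and $t$ (when ${\rm Re}(\Lambda_0)=0$, i.e. $3\le N\le 6$, where the indicial root is double and the second model solution grows like $t$, i.e. like $\log|y|$); (4) because the forcing $g$ decays like $r^{-3}=e^{-3t}$, the variation-of-parameters integral converges and produces a particular solution decaying like $e^{-t}=|y|^{-1}$ when $N\ge 5$ (so the $e^{-3t}$ forcing dominates the homogeneous decay $e^{-\Lambda_0 t}$ only if $\Lambda_0<2$, which one checks), and like $t\,e^{-t}=|y|^{-1}\log|y|$ when $N=4$ and like $t\,e^{-t/2}=|y|^{-1/2}\log|y|$ when $N=3$, matching \eqref{est-phi-sharp}; (5) glue the asymptotic solution to a solution on $K_R$ using the $S$-dilation-nondegeneracy from Theorem \ref{theo:examples} (Lemma \ref{lemma-nondeg-examples}) to rule out a homogeneous obstruction, and track the dependence of all constants on $\|g\|_{C^{0,\beta}_3(\Sigma)}$ via the elliptic estimate \eqref{ineq:ellipt_estimates_compact} on the core together with the explicit ODE estimate on the end.

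I expect the main obstacle to be step (3)--(4) in the borderline dimensions $N=3,4$: there $\bar\nu=-\tfrac{N-2}{2}$ is itself an indicial root, the two model solutions are $1$ and $t$ (resp., after undoing Emden--Fowler, $r^{-\frac{N-2}{2}}$ and $r^{-\frac{N-2}{2}}\log r$), and one must show that the particular solution produced by variation of parameters does not contain a component along the non-decaying model solution $1$, which would destroy the claimed decay; this amounts to verifying that a certain improper integral of $g$ against the model solutions — essentially the projection $\mu$ of $g$ onto the one-dimensional cokernel, as in \eqref{eqn:scalar_prprojec} — either vanishes by symmetry or is absorbed by adjusting the solution on $K_R$, which is precisely where $S$-dilation-nondegeneracy (equivalently $K_{-\delta}(S)=\{0\}$, Proposition \ref{prop-non-degeneracy}) is used. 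A secondary technical point is controlling the remainder $\mathcal{R}(\tilde v)$ from \eqref{est-remainder} in the variation-of-parameters scheme so that it is a genuinely lower-order perturbation; this is handled exactly as in Lemma 11.1.3 and Corollary 11.3.1 of \cite{P}, cited already in the previous subsection, so I would simply invoke those with the stated minor changes rather than redo the fixed-point argument.
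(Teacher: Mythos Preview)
Your overall strategy---reduce to an ODE via the $O(m)\times O(n)$ symmetry, pass to Emden--Fowler variables, and solve by variation of parameters---is the same as the paper's. However, there is a genuine error in your identification of the model solutions on the asymptotic end, and this leads you to a wrong account of where the logarithms in \eqref{est-phi-sharp} come from.

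For $3\le N\le 6$ you claim that ${\rm Re}(\Lambda_0)=0$ forces a \emph{double} indicial root with model solutions $1$ and $t$. This is not correct: here $\Lambda_0^2=\big(\tfrac{N-2}{2}\big)^2-(N-1)<0$, so $\Lambda_0$ is purely imaginary and \emph{nonzero}, and the limiting equation $u_{tt}+V(\infty)u=0$ at $t\to+\infty$ has $V(\infty)=-\Lambda_0^2>0$. The two model solutions are therefore bounded oscillatory functions (essentially $\cos(|\Lambda_0|t)$ and $\sin(|\Lambda_0|t)$), not $1$ and $t$. Consequently the logarithm for $N=4$ does not come from a repeated root; it comes from the fact that the transformed forcing satisfies $\tilde f(t)=e^{2t}p(t)^{-1}f(e^t)=O(e^{\frac{N-4}{2}t})$, which is $O(1)$ precisely when $N=4$, so the variation-of-parameters integrals $\int w_\pm\tilde f$ grow linearly in $t$. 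For $N=5,\,6$ the forcing grows and one recovers $|y|^{-1}$; for $N=3$ it decays and the integrals converge.

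There is also a structural difference. The paper neither glues an end solution to a core solution nor invokes $S$-dilation-nondegeneracy. It partitions the $t$-axis into three intervals and builds \emph{one} global solution by forward variation of parameters, starting from $t\to-\infty$ (i.e.\ from the tip of the profile curve at $s\to 0$). On the leftmost interval the known Jacobi field $\zeta_0$ supplies an explicit element $u_+$ of the fundamental set, fixing the initial data; the solution is then propagated through a compact middle interval by a generic fundamental system, and extended to $(t_1,\infty)$ using a \emph{bounded} fundamental system $w_\pm$. Since the homogeneous part on the right is just $v(t_1)w_+ + v'(t_1)w_-$ with $w_\pm$ bounded, no cokernel obstruction arises and there is no projection $\mu_j$ to kill---so your anticipated ``main obstacle'' simply does not occur in this ODE setting. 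Your plan of solving on $\Sigma_R$ and matching via Proposition~\ref{prop-non-degeneracy} could presumably be pushed through, but it is more elaborate than needed and imports a hypothesis the proof does not actually use.
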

These solutions can be obtained by Theorem \ref{prop-right-inverse-Jacobi} too. However, Theorem \ref{th-sym-sharp} gives the {\it sharp decay} at infinity of such solutions, using the symmetry of these hypersurfaces.\\ 

The remaining part of the section is dedicated to the proof of Theorem \ref{th-sym-sharp}, which consists various steps.\\

Being $\Sigma$ is $O(m)\times O(n)$ invariant, so it can be parametrised by
$$(s,{\tt x},{\tt y})\in(0,\infty)\times S^{m-1}\times S^{n-1}\mapsto\Psi(s,{\tt x},{\tt y}):=(a(s){\tt x}, b(s){\tt y})\in\Sigma.$$
In other words, $\Sigma$ can be seen as a curve $\gamma(s):=(a(s),b(s))$ in the quadrant $$Q:=\{(a,b)\in\R^2:\,a\ge 0,\,b\ge 0\}.$$ 
%For future reference, we introduce the notation $(s(p),{\tt x}(p),{\tt y}(p)):=\Psi^{-1}(p)$. 
Without loss of generality, we can assume that $\gamma$ is parametrised by arc-length, in the sense that $(a')^2+(b')^2=1$. In the variables $(s,\x,\y)$, the mean curvature equation and the second fundamental form are given by
\begin{equation}
\label{0_mc_O(m)O(n)_invariant}
H_{\Sigma}=-a'' b'+b''a'+(m-1)\frac{b'}{a}-(n-1)\frac{a'}{b}=0
\end{equation}
and
\begin{equation}
\label{1_mc_O(m)O(n)_invariant}
\begin{aligned}
|A_{\Sigma}|^2 &= \left(-a''b'+ a'b''\right)^2 + (m-1)\left(\frac{b'}{a} \right)^2 + (n-1)\left(\frac{a'}{b} \right)^2.
\end{aligned}
\end{equation}

Observe that in coordinates $(s,\x,\y)$, the expressions \eqref{0_mc_O(m)O(n)_invariant} and \eqref{1_mc_O(m)O(n)_invariant} depend only on $s$ due to the $O(m)\times O(n)$ invariance of $\Sigma$. For a more detailed description of $\Sigma$, see \cite{agudelo2022doubling}.

Denote $$\alpha=(m-1)\frac{a'}{a}+(n-1)\frac{b'}{b}, \qquad \beta= \left(-a''b'+ a'b''\right)^2 + (m-1)\left(\frac{b'}{a} \right)^2 + (n-1)\left(\frac{a'}{b} \right)^2.
$$

Since the right-hand side $g={\rm tr}(A_\Sigma^3)$ of (\ref{eq-trA3}) is $O(m)\times O(n)$ invariant, we look for solutons $\psi$ of the Jacobi equation, which are $O(m)\times O(n)-$invariant, so that the equation reduces to an ODE in the $s$-variable, namely
\begin{equation}\notag
\psi_{ss}+\alpha\psi_s+\beta\psi=f,
\end{equation}
where $f(s):=g(y)$, $\psi(s):=\phi(y)$. Introducing the Emden-Fowler change of variables $s=e^t>0$ and writing $\psi(s)=p(t)u(t)$, where
$$p(t)=\exp\left(-\int_0^t \frac{\alpha(e^\tau)e^\tau-1}{2}d\tau\right),$$
we reduce ourselves to solve the equation
\begin{equation}
\label{Jacobi-eq-EF}
u_{tt}+V(t)u=\tilde{f}\qquad\text{in $\R$,}
\end{equation}
where (see for instance Section 2 in \cite{agudelo2022doubling} and Section 5 in \cite{agudelo2024jacobi})
$$\tilde{f}(t)=\frac{e^{2t}}{p(t)}f(e^t),\qquad V(t)=-\frac{1}{4}(\alpha(e^t)e^t-1)^2+\frac{1}{2}(\alpha'(e^t)e^{2t}+\alpha(e^t)e^t)+\beta(e^t)e^{2t}$$
and 
$$
p(t) =
\left\{
\begin{aligned}
e^{-\frac{n-2}{2}t}\big(
1 + O(e^{2t})\big)
,\quad  &t < T_0\\
e^{-\frac{N-2}{
2}t} \big(
1 + O(e^{-t})
\big)
,\quad  &t > T_1.
\end{aligned}
\right.
$$

From the results in Section 5 in \cite{agudelo2024jacobi}, following the same ideas from Section 2 in \cite{agudelo2022doubling} and from Theorem \ref{th-Al} or Theorem \ref{th-Sigma-low-dim}, we can conclude that the asymptotic behavior of $V(t)$ at infinity is still given by $(2.45)$ of \cite{agudelo2022doubling}, that is
\begin{equation}
\label{as-V}
V(t)=
\begin{cases}
-\frac{(n-2)^2}{4}+O(e^{\gamma t}) \qquad\text{as $t\to-\infty$}\\
-\frac{(N-2)^2}{4}+N-1+O(e^{-\gamma t})\qquad\text{as $t\to\infty$}
\end{cases}
\end{equation} 
for some $\gamma>0$. For the sake of completeness, we include the proof of  \eqref{as-V}.

In order to do so, we use Propositions $3$ and $5$ of \cite{Mazet_2017} and the fact that we can choose a basis of the $O(m)\times O(n)-$invariant subspaces of the Kernel of $J_{C_{m,n}}$ consisting of $2$ elements $\zeta_\pm$ such that $\zeta_\pm(s)=O(s^{-\frac{N-2}{2}})$ as $s\to\infty$. From these facts we observe that, outside a ball, $\Sigma$ is a normal graph of a function $\psi=\zeta+\varphi$ over the Lawson cone $C_{m,n}$, where $\zeta$ is in the $O(m)\times O(n)-$invariant Kernel of the Jacobi operator $J_{C_{m,n}}$ of the cone and $$|\varphi(s)|\le c s^{-(N-2)+2\delta}\qquad\forall s\ge s_0,$$
for some $\delta>0$.  In other words, the surface can be represented in the $(a,b)$-plane by a curve of the form
\begin{equation}\notag
(a,b)=\left(\sqrt{\frac{m-1}{N-1}},\sqrt{\frac{n-1}{N-1}}\right)s+\psi(s)\left(\sqrt{\frac{n-1}{N-1}},-\sqrt{\frac{m-1}{N-1}}\right).
\end{equation}
The zero mean curvature equation gives
\begin{equation}
\label{H=0-as-beh}
\begin{aligned}
-a''b'+a'b''&=(n-1)\frac{a'}{b}-(m-1)\frac{b'}{a}=\\
&=(n-1)\left(\sqrt{\frac{m-1}{n-1}}\frac{1}{s}+O(s^{-\frac{N+2}{2}})\right)
-(m-1)\left(\sqrt{\frac{n-1}{m-1}}\frac{1}{s}+O(s^{-\frac{N+2}{2}})\right)=\\
&=O(s^{-\frac{N+2}{2}}).
\end{aligned}
\end{equation}
Differentiating the arc-length relation $(a')^2+(b')^2=1$ and we get
\begin{equation}
\label{arc-lenght-diff}
a'' a'+b'' b'=0.
\end{equation}
Using that $a'\to\sqrt{\frac{m-1}{N-1}}$ and $b'\to\sqrt{\frac{n-1}{N-1}}$ as $s\to\infty$ and inverting the matrix associated to the system given by equations (\ref{H=0-as-beh}) and (\ref{arc-lenght-diff}), it is possible to see that $$a''=O(s^{-\frac{N+2}{2}}),\,b''=O(s^{-\frac{N+2}{2}})\qquad s\to\infty,$$
so that 
$$\frac{a''}{a}=O(s^{-\frac{N}{2}-2}),\,\frac{b''}{b}=O(s^{-\frac{N}{2}-2})\qquad s\to\infty.$$
As a consequence, since $3\le N\le 6$ we have
\begin{equation}
\begin{aligned}
\alpha'&=(m-1)\left(\frac{a''}{a}-\left(\frac{a'}{a}\right)^2\right)+(n-1)\left(\frac{b''}{b}-\left(\frac{b'}{b}\right)^2\right)\\
&=-\frac{N-1}{s^2}+O(s^{-\frac{N}{2}-2})\qquad s\to\infty.
\end{aligned}
\end{equation}
Finally, using that $\alpha(s)=\frac{N-1}{s}+O(s^{-1+\delta})$ and that we are setting $s=e^t$ for $t\in \R$, we have (\ref{as-V}).\\

In order to solve the Jacobi equation (\ref{Jacobi-eq-EF}) we divide $\R$ into $3$ intervals for the variable $t$, where we will analyze the behavior of the $O(m)\times O(n)-$invariant Jacobi fields.

\subsection{The interval $(-\infty, t_0)$}\label{subs-left}

We first consider an interval $(-\infty,t_0)$, with $t_0$ such that the Jacobi field $\zeta_0(y):=y\cdotp \nu_\Sigma(y)$ does not change sign for $t=\log(s)\in(-\infty,t_0)$. It is possible to see that, setting  $$u_+(t):=\frac{\zeta_0(y)}{p(t)}=\frac{a(e^t)b'(e^t)-a'(e^t)b(e^t)}{p(t)},\qquad s=e^t,$$
we have 
\begin{equation}
\label{Jacobi-field+as-beh-n>2}
0<c e^{\lambda t}\le u_+(t)\le Ce^{\lambda t},\,|\partial_t u_+(t)|\le Ce^{\lambda t}\qquad\forall\, t\in(-\infty,t_0),\,n\ge 3
\end{equation}
and
\begin{equation}
\label{Jacobi-field+as-beh-n=2}
0<c\le u_+(t)\le C,\,|\partial_t u_+(t)|\le C\qquad\forall\, t\in(-\infty,t_0),\,n=2.
\end{equation}
Therefore, using the variation of parameters formula, the other linearly independent Jacobi field in the interval $(-\infty,t_0)$ is defined by
$$u_-(t):=u_+(t)\int_{-\infty}^t \frac{1}{u_+(\tau)^2}d\tau$$
and its asymptotic behaviour is given by
\begin{equation}\notag
\label{Jacobi-field-as-beh-n>2}
0<ce^{-\lambda t}\le u_-(t)\le Ce^{-\lambda t},\,|\partial_t u_-(t)|\le Ce^{-\lambda t}\qquad\forall\, t\in(-\infty,t_0),\,n\ge 3
\end{equation}
and
\begin{equation}\notag
\label{Jacobi-field-as-beh-n=2}
0<-ct\le u_-(t)\le -Ct,\,|\partial_t u_-(t)|\le C|t|\qquad\forall\, t\in(-\infty,t_0),\,n=2.
\end{equation}
Multiplying by a constant if necessary, we can assume that the corresponding Wronskian determinant is $1$. Since $f$ is bounded in $(-\infty,t_0)$, we have $$|\tilde{f}|\le ce^{\frac{n+2}{2}t}\|g\|_{C^{1,\beta}_3(\Sigma)}\qquad\forall t\in(-\infty, t_0),$$
so that the solution
$$u(t):=u_+(t)\int_{-\infty}^t u_-(\tau)\tilde{f}(\tau)d\tau-u_-(t)\int_{-\infty}^t u_+(\tau)\tilde{f}(\tau)d\tau$$ 
fulfils
\begin{equation}
\label{est-u-left-n>2}
|u(t)|+|u'(t)|\le c e^{\frac{n+2}{2}t}\|g\|_{C^{0,\beta}_3(\Sigma)}\qquad\forall t\in(-\infty, t_0),\, n\ge 3
\end{equation}
and
\begin{equation}
\label{est-u-left-n=2}
|u(t)|+|u'(t)|\le cte^{2t}\|g\|_{C^{0,\beta}_3(\Sigma)}\qquad\forall t\in(-\infty, t_0),\, n=2
\end{equation}
%so that, setting $\phi(s):=p(t)u(t)$, we have
%\begin{equation}
%\label{phi(0)-n>2}
%|\phi(s)|\le cs^2\|f\|_{C^{1,\beta}_3(\Sigma)},\,|\phi_s(s)|=|(p_t u+pu_t)\frac{dt}{ds}|\le cs\|f\|_{C^{1,\beta}_3(\Sigma)}\qquad n\ge 3.
%\end{equation}

\subsection{The intermediate interval}\label{subs-intermediate}
We fix $t_1>t_0$ and we solve equation (\ref{Jacobi-eq-EF}) in the interval $(t_0,t_1)$. The value of $t_1$ will be fixed in Subsection \ref{subs-right}. We consider a fundamental set $\{v_\pm\}$ such that $$v_+(t_0)=v'_-(t_0)=1,\qquad v_-(t_0)=v'_+(t_0)=0$$
and we define the solution $v$ through the variation of parameter formula
$$v(t)=u(t_0)v_+(t)+u'(t_0)v_-(t)+v_+(t)\int_{t_0}^t v_-(\tau)\tilde{f}(\tau)d\tau-v_-(t)\int_{t_0}^t v_+(\tau)\tilde{f}(\tau)d\tau,$$
so that $v(t)$ solves equation (\ref{Jacobi-eq-EF}) in $(t_0,t_1)$ with initial values $v(t_0)=u(t_0)$ and $v'(t_0)=u'(t_0)$. Thus, $v\in C^{2,\beta}$ extends of $u$ to $(-\infty,t_1)$. Since $v_\pm$ and $\tilde{f}$ are bounded in $(t_0,t_1)$, then 
\begin{equation}
\label{est-u-intermediate}
|v(t)|+|v'(t)|\le c\|g\|_{C^{0,\beta}_3(\Sigma)}\qquad\forall\, t\in(t_0,t_1).
\end{equation}

\subsection{The interval $(t_1,\infty)$}\label{subs-right}
Due to the asymptotic behaviour of the potential at infinity, for $t_1>0$ sufficiently large it is possible to find a fundamental set $w_\pm$ such that $$w_+(t_1)=w'_-(t_1)=1,\qquad w_-(t_1)=w'_+(t_1)=0$$
and 
$$w_\pm,\,\partial_t w_\pm=O(1)$$
as $t\to\infty$. Applying once again the variation of parameters formula we can define a solution to (\ref{Jacobi-eq-EF}) by setting
$$w(t)=v(t_0)w_+(t)+v'(t_0)w_-(t)+w_+(t)\int_{t_0}^t w_-(\tau)\tilde{f}(\tau)d\tau-w_-(t)\int_{t_0}^t w_+(\tau)\tilde{f}(\tau)d\tau.$$
As above, the initial conditions guarantee that $w$ is an extension of $u$ to an entire $C^{2,\beta}$ solution to (\ref{Jacobi-eq-EF}). Moreover, the asymptotic behaviour of $f$ at infinity gives the estimate for $t\in(t_1,\infty)$,
\begin{equation}
\label{est-u-intermediate}
|w(t)|+|w'(t)|\le 
\begin{cases}
c e^{\frac{N-4}{2}t}\|g\|_{C^{0,\beta}_3(\Sigma)}\qquad& N=5,6\\
ct\|g\|_{C^{0,\beta}_3(\Sigma)}\qquad& N=4\\
c\|g\|_{C^{0,\beta}_3(\Sigma)}\qquad& N=3
\end{cases}
\end{equation}
\subsection{The behaviour of the solution at infinity}
To summarise, we pullback to the $s$-variable, to find that the asymptotic behaviour of $\psi(s)=\psi(s)=p(\ln(s))u(\ln(s))$ is given by
\begin{equation}\notag
\begin{aligned}
&|\psi(s)|(s+1)\le c\|g\|_{C^{0,\beta}_3(\Sigma)},\,|\psi_s(s)|(s+1)^2\le c\|g\|_{C^{0,\beta}_3(\Sigma)}\qquad\forall\, s\ge 0,\,N=5,6\\
&|\psi(s)|\frac{s+1}{\log(s+2)}\le c\|g\|_{C^{0,\beta}_3(\Sigma)},\,|\psi_s(s)|\frac{(s+1)^2}{\log(s+2)}\le c\|g\|_{C^{0,\beta}_3(\Sigma)}\qquad\forall\, s\ge 0,\,N=4\\
&|\psi(s)|\frac{(s+1)^{1/2}}{\log(s+2)}\le c\|g\|_{C^{0,\beta}_3(\Sigma)},\,|\psi_s(s)|\frac{(s+1)^{3/2}}{\log(s+2)}\le c\|g\|_{C^{0,\beta}_3(\Sigma)}\qquad\forall\, s\ge 0,\,N=3.
\end{aligned}
\end{equation}
We note that, in a neighbourhood of the origin, the asymptotic behaviour of the solution is given by
\begin{equation}
\begin{aligned}
\psi(s)=O(s^2)%,\,\phi_s(s)=O(s)
\qquad\text{as $s\to 0^+$, $n\ge 3$}\\
\psi(s)=O(s^2 |\log s|)%,\,\phi_s(s)=O(s |\log s|)
\qquad\text{as $s\to 0^+$, $n=2$}
\end{aligned}
\end{equation}
As a consequence, the corresponding solution $\phi(y):=\psi(s)$ to \eqref{eq-trA3} fulfills \eqref{est-phi-sharp}.\\

%In order to conclude the proof of Theorem \ref{th-sym-sharp}, it is enough to prove that the decay rate of $\phi$ given by \eqref{eq-trA3} is sharp.\\

%In order to do so, let 
%$$w_N(y):=
%\begin{cases}
%(1+|y|)\qquad\text{in $\Sigma$, if $N\ge 5$}\\
%\frac{1+|y|}{\log(2+|y|)}\qquad\text{in $\Sigma$, if $N\ge 4$}\\
%\frac{(1+|y|)^{1/2}}{\log(2+|y|)}\qquad\text{in $\Sigma$, if $N\ge 3$}\\
%\end{cases}
%$$
%if $\tilde{\phi}$ is another bounded $O(m)\times O(n)$-invariant solution to \eqref{eq-trA3}, then $\tilde{\phi}$

\begin{remark}
In conclusion, for $N=3,4$, the ODE analysis gives a better estimate of the asymptotic behaviour of $\phi$ than the one predicted by applying point $(1)$ of Theorem \ref{prop-right-inverse-Jacobi}. Roughly speaking, we can see that case $N=3,4$, we can see that the presence of an indicial root makes us lose a logarithm.
\end{remark}

\bibliographystyle{abbrv}
	\bibliography{bibliography}
	\addcontentsline{toc}{chapter}{\protect\numberline{}Bibliography}
\end{document}